\newtheorem{thm}{Theorem}[section]
\newtheorem{lem}[thm]{Lemma}
\newtheorem{prop}[thm]{Proposition}
\theoremstyle{remark}
\newtheorem{rem}[thm]{Remark}
\newtheorem{claim}{Claim}[thm]
\newcommand{\Blank}{\mathsf{B}}
\newcommand{\Terminal}{\mathsf{T}}
\newcommand{\Cross}{\mathsf{C}}
\newcommand{\NCross}{\bcancel{\mathsf{C}}}
\newcommand{\Leftturn}{\mathsf{L}}
\newcommand{\Ordinary}{\mathsf{O}}
\newcommand{\DCross}{\mathsf{D}}
\newcommand{\Straight}{\mathsf{S}}
\newcommand{\Initial}{\mathsf{I}}
\newcommand{\NPlus}{\bcancel{\mathsf{P}}}
\newcommand{\Plus}{\mathsf{P}}
\newcommand{\Groth}{\mathfrak{G}}
\newcommand{\Schub}{\mathfrak{S}}
\newcommand{\Gb}{\mathfrak{G}^{(\beta)}}
\newcommand{\id}{\mathrm{id}}
\newcommand{\MBPD}{\mathrm{MBPD}}
\newcommand{\PD}{\mathrm{PD}}
\newcommand{\RCP}{\mathrm{RCP}}
\newcommand{\wt}{\mathrm{wt}}
\newcommand{\Z}{\mathbb{Z}}
\DeclareMathOperator{\rpop}{\mathrm{rpop}}
\newcommand{\es}{e^*}
\newcommand{\fs}{f^*}
\newcommand{\ftarg}{F}
\newcommand{\etarg}{E}
\newcommand{\window}{window}
\newcommand{\Y}{Y}
\newcommand{\XX}{X}
\newcommand{\II}{I}
\newcommand{\term}{\mathrm{T}}
\newcommand{\nonterm}{{\mathrm{\overline{T}}}}
\newcommand{\btile}{
 \begin{tikzpicture}[x=1em,y=1em,thick,color = blue]
\draw[step=1,gray,thin] (0,0) grid (1,1);
\draw[color=black, thick, sharp corners] (0,0) rectangle (1,1);
\end{tikzpicture}}
\newcommand{\htile}{
\begin{tikzpicture}[x=1em,y=1em,thick,color = blue]
\draw[step=1,gray,thin] (0,0) grid (1,1);
\draw[color=black, thick, sharp corners] (0,0) rectangle (1,1);
\draw(1.0,0.5)--(0.0,0.5);
\end{tikzpicture}
}
\newcommand{\vtile}{
\begin{tikzpicture}[x=1em,y=1em,thick,color = blue]
\draw[step=1,gray,thin] (0,0) grid (1,1);
\draw[color=black, thick, sharp corners] (0,0) rectangle (1,1);
\draw(0.5,1.0)--(0.5,0.0);
\end{tikzpicture}}
\newcommand{\ptile}{
\begin{tikzpicture}[x=1em,y=1em,thick,color = blue]
\draw[step=1,gray,thin] (0,0) grid (1,1);
\draw[color=black, thick, sharp corners] (0,0) rectangle (1,1);
\draw(0.5,1.0)--(0.5,0.0);
\draw(1.0,0.5)--(0.0,0.5);
\end{tikzpicture}}
\newcommand{\rtile}{
\begin{tikzpicture}[x=1em,y=1em,thick,color = blue]
\draw[step=1,gray,thin] (0,0) grid (1,1);
\draw[color=black, thick, sharp corners] (0,0) rectangle (1,1);
\draw(1.0,0.5)--(0.5,0.5)--(0.5,0.0);
\end{tikzpicture}}
\newcommand{\jtile}{
\begin{tikzpicture}[x=1em,y=1em,thick,color = blue]
\draw[step=1,gray,thin] (0,0) grid (1,1);
\draw[color=black, thick, sharp corners] (0,0) rectangle (1,1);
\draw(0.5,1.0)--(0.5,0.5)--(0.0,0.5);
\end{tikzpicture}}
\newcommand{\mtile}{
\begin{tikzpicture}[x=1em,y=1em,thick,color = blue]
\draw[step=1,gray,thin] (0,0) grid (1,1);
\draw[color=black, thick, sharp corners] (0,0) rectangle (1,1);
\draw(0.5,1.0)--(0.5,0.5)--(0.0,0.5);
\node at (0.5,0.5) {$\bullet$};
\end{tikzpicture}}
\newcommand{\bumptile}{
\begin{tikzpicture}[x=1em,y=1em,thick,rounded corners,color = blue]
\draw[step=1,gray,thin] (0,0) grid (1,1);
\draw[color=black, thick, sharp corners] (0,0) rectangle (1,1);
\draw(1.0,0.5)--(0.5,0.5)--(0.5,0.0);
\draw(0.5,1.0)--(0.5,0.5)--(0.0,0.5);
\end{tikzpicture}}
\title{Marked Bumpless Pipedreams and Compatible Pairs}
\author{Daoji Huang}
\author{Mark Shimozono}
\author{Tianyi Yu}
\begin{document}

\begin{abstract}
    We construct a bijection between marked bumpless pipedreams with reverse compatible pairs, which are in bijection with not-necessarily-reduced pipedreams. This directly unifies various formulas for Grothendieck polynomials in the literature. Our bijection is a generalization of a variant of the bijection of Gao and Huang in the unmarked, reduced case.
\end{abstract}

\maketitle

\section{Introduction}

The polynomial ring $\mathbb{Z}[\beta][x_1,x_2,\cdots]$ admits operators
\[\partial_i:=\frac{1-s_i}{x_i-x_{i+1}}\text{ and }\pi_i:= \partial_i(1+\beta x_{i+1}),\]
where $s_i$ acts by switching $x_i$ and $x_{i+1}$, $x_i$ acts as multiplication by $x_i$, and $1$ denotes the identity operator. The \emph{$\beta$-Grothendieck polynomials} $\Gb_w$ for $w\in S_\infty:=\bigcup_{i=1}^\infty S_n$ are the unique family of polynomials that satisfy
\begin{align*}
\Gb_{w_0^n}&=x_1^{n-1}x_2^{n-2}\cdots x_{n-1}&\quad&\text{where
$w_0^n:=n\ n-1 \ \cdots \ 1$} \\
\pi_i\Gb_w&=\pi_i\Gb_{ws_i}&&\text{if $\ell(ws_i)=\ell(w)-1$.}
\end{align*}
The $\beta$-Grothendieck polynomials indexed by $w\in S_n$ 
are a Schubert basis of connective $K$-theory of the flag variety \cite{H}. 
Specializing $\beta$ yields two important families of polynomials, both of which were introduced in this form by Lascoux and Sch\"utzenberger \cite{LS}.
The Grothendieck polynomials $\Groth_w = \Gb_w|_{\beta=-1}$ represent structure sheaves of Schubert varieties in the flag variety. The Schubert polynomials $\Schub_w = \Gb_w|_{\beta=0}$ of \cite{LS} represent the cohomology classes of Schubert varieties in the flag variety \cite{BGG} \cite{Dem} \cite{LS}.
Equivalently $\Schub_w$ is the coefficient of $\beta^0$ in $\Gb_w$ or equivalently the lowest $x$-degree component of $\Gb_w$.

The $\beta$-Grothendieck polynomials satisfy the positivity
$\Gb_w\in \Z_{\ge0}[\beta;x_1,x_2,\dotsc]$ for all $w\in S_\infty$.
Due to this positivity, any combinatorial formula for $\Gb_w$ based on a bijection between a set and the monomials of $\Gb_w$, restricts to a combinatorial formula for 
Schubert polynomials.

There are many combinatorial formulas for Grothendieck polynomials. 
Fomin and Kirillov \cite[Prop. 3.3]{FK} gave a compatible pair formula 
for Grothendieck polynomials, extending the formula of Billey, Jockusch, and Stanley \cite{BJS}
for Schubert polynomials. Knutson and Miller \cite{KM} interpreted Grothendieck polynomials as $K$-polynomials of matrix Schubert varieties (up to change of variables) and showed via Gr\"obner degeneration they can be computed with (not-necessarily-reduced) pipedreams, generalizing the reduced pipedream formula of Billey and Bergeron \cite{BB} for Schubert polynomials. Knutson and Miller \cite{KM} also showed that
the Schubert polynomials are multidegrees of matrix Schubert varieties.
Billey and Bergeron \cite{BB} gave a bijection between reduced compatible pairs and reduced pipedreams and their formula generalizes directly to the not-necessarily-reduced case. On the other hand, using marked bumpless pipedreams, Weigandt \cite{W} reinterpreted Lascoux's formulas \cite{Las} for Grothendieck polynomials based on alternating sign matrices (ASMs),
which extends the bumpless pipedream formula of \cite{LLS} for Schubert polynomials.

Gao and Huang \cite{GH} gave a bijection between reduced pipedreams and reduced bumpless pipedreams.
The goal of this paper is to extend the bijection of Gao and Huang, furnishing a bijection
between pipedreams and marked bumpless pipedreams.
This provides a direct connection between the various kinds of combinatorial formulas for Grothendieck polynomials in the literature. In a later paper we shall prove that our bijection restricts to that of \cite{GH} in the reduced case. 
The Gao--Huang bijection processes tiles in a bumpless pipedream from top to bottom
using a generalization of the column moves of \cite{LLS} which operate on adjacent columns. In particular, the generalization involves scanning over blank tiles. 
In the reduced case our map processes tiles from bottom to top,
has no blank scanning, and operates on adjacent rows by row moves, 
the transpose of \cite{LLS} column moves. 

In fact, long before the discovery of bumpless pipedreams \cite{LLS}, it was known that the 2-enumeration of alternating sign matrices of size $n$ is $2^{n(n-1)/2}$ \cite{MPR,EKLP,Kuperberg}, which is clearly the number of not-necessarily reduced pipedreams of size $n$. Therefore, based on Weigandt's observation that connects alternating sign matrices and bumpless pipedreams \cite{W}, our bijection can be viewed as a bijection between 2-enumerated ASMs and not-necessarily reduced pipedreams. This connection between ASMs and pipedreams was recently explored by Striker and Huang \cite{HS} to establish a partial bijection between totally symmetric self-complementary plane partitions and ASMs, using the Gao--Huang bijection.

The not-necessarily-reduced setting introduces many complications which make it considerably more
subtle than the reduced case. 

\subsection{Pipedreams} 
Let $[n]=\{1,2,\dotsc,n\}$. A (not-necessarily-reduced) pipedream is a tiling of $\{(i,j)\in[n]\times[n]: i+j\le n+1\}$, such that all the entries $(i,j)$ with $i+j=n+1$ have tiles $\jtile$, and all other entries are either $\ptile$ or $\bumptile$. 
We use the matrix-style notation $D_{i,j}$ for the tile in the $i$-th row and $j$-th column.
Every such diagram can be viewed as $n$ pipes entering from the top and exit from the left. We denote the set of pipedreams in the $n\times n$ grid with $\PD(n)$. Each $D\in \PD(n)$ has an associated permutation $w=w(D)\in S_n$ where the pipe entering at the left border at the $i$-th row and exits the top border at the $w(i)$-the column, with the proviso that if two pipes have already crossed, the subsequent crossings between this pair of pipes are ignored (that is, treated like $\bumptile$.). For $w\in S_n$, let $\PD(w)$ denote the set of pipedreams with associated permutation $w$. For $w\in S_n$, the Grothendieck polynomial $\mathfrak{G}_w$ can be computed via pipedreams \cite{FK}:
\begin{align}\label{E:G pipedream}
\Gb_w =\sum_{D\in\PD(w)} \beta^{|\mathrm{cross}(D)|-\ell(w)}\prod_{(i,j)\in\mathrm{cross}(D)}x_i
\end{align}
where $\mathrm{cross}(D)$ denote the set of coordinates of the $\ptile$ in $D$.

\subsection{Compatible pairs}
A \textit{biletter} is an ordered pair of integers $(i,a)$ with $1\le i\le a < n$.
A \textit{reverse compatible pair} is a sequence of biletters
\begin{align}\label{E:RCP}
B = ((i_1,a_1),(i_2,a_2),\dotsc,(i_\ell,a_\ell))
\end{align}
which are strictly decreasing in the order given by $(i_1,a_1)>(i_2,a_2)$ if $i_1>i_2$ or if $i_1=i_2$ and $a_1<a_2$. 
Let $\RCP(n)$ denote the set of reverse compatible pairs.

The weight of $B$ is defined by $\wt(B)=(m_1,\dotsc,m_n)\in\Z_{\ge0}^n$ where $m_i$ is the number of biletters in $B$ of the form $(i,a)$ for some $a$.

The Demazure or $0$-Hecke product $*$ on permutations is the unique monoid structure such that for any simple reflection $s_i$ and permutation $w$, 
\begin{equation*}
    s_i * w = \begin{cases}
        s_i w & \text{if $s_i w>w$} \\
        w & \text{otherwise.}
    \end{cases}
\end{equation*}
Every $B\in \RCP(n)$ has an associated permutation
$w(B) = s_{a_\ell} * s_{a_{\ell-1}} * \dotsm * s_{a_1}$
with notation as in \eqref{E:RCP}. 
Note that the subscripts of $a$ are \emph{decreasing}.

Denote by $\RCP(w)$ the set of $B\in \RCP(n)$ with associated permutation $w$. Let $|B|$ be the length of the sequence $B$.

\begin{rem} \label{R:CP and PD} 
There is a bijection $\RCP(n)\to \PD(n)$ that sends the compatible pair $B=((i_1,a_1),\dotsc,(i_\ell,a_\ell))$ to the pipedream with crossings at positions $(i_j, a_j - i_j + 1)$ for $1\le j\le \ell$.
This was proved in \cite{BB} for the reduced case but the proof works in the nonreduced setting. 
This bijection preserves the weight and the associated permutation. Therefore 
\begin{align}\label{E:Groth by RCP}
  \Gb_w &= \sum_{B\in \RCP(w)} \beta^{|B|-\ell(w)} x^{\wt(B)}.
\end{align}
\end{rem}

\subsection{Marked Bumpless pipedreams}
We work with a set of tiles
\[
\btile,\quad \htile, \quad \vtile,\quad \ptile,\quad,\rtile,\quad\jtile,\quad\mtile
\]
named blank, horizontal, vertical, plus, R, J, and marked J.

We will say that a tile \textit{connects to the right} (resp. left, up, down) if it contains a line segment going from its center to the right (resp. left, up, down). 

A \textit{marked bumpless pipedream} is an
$[n]\times[n]$ matrix with entries in the above set of tiles
such that every row has a pipe entering from the right, every column has a pipe leaving to the 
south, no pipe enters from the top, no pipe leaves to the left, and the ``pipes are connected".
Let $\MBPD(n)$ be the set of $[n]\times[n]$ marked bumpless pipedreams. 
Each $D\in \MBPD(n)$ has an \textit{associated permutation} $w=w(D) \in S_n$ where the pipe entering the right border at the $i$-th row, exits the bottom border in the $w(i)$-th column, with the proviso that if two pipes have already crossed, then subsequent crossings between this pair of pipes are ignored, that is, treated like bump tiles $\bumptile$.
For $w\in S_n$ let $\MBPD_w = \{ D\in \MBPD(n)\mid w(D)=w\}$, the bumpless pipedreams with associated permutation $w$. For every $w\in S_n$ the \textit{Rothe bumpless pipedream} 
$D_w$ is the unique element of $\MBPD_w$ whose $i$-th pipe turns only at position $(i,w(i))$ for all $i$.

Say that a tile is \textit{heavy} if it is either a $\btile$ or a $\mtile$ and \textit{light} otherwise. The \textit{weight} of $D\in\MBPD(n)$ is the sequence $\wt(D)=(m_1,m_2,\dotsc,m_n)\in\Z_{\ge0}^n$ where $m_i$ is the number of heavy tiles in the $i$-th row of $D$. The only element of $\MBPD(n)$ with no heavy tile is $D_\id$.

Our main theorem is a constructive proof of the following.

\begin{thm}
\label{T: main}
There are mutually-inverse bijections
$\Phi:\MBPD(n)\to \RCP(n)$ and $\Psi:\RCP(n)\to\MBPD(n)$ which preserve weights and associated permutations. Furthermore, these bijections restrict to mutually-inverse bijections between reduced, unmarked bumpless pipedreams and reduced reverse compatible sequences.
\end{thm}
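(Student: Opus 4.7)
The plan is to define $\Phi$ and $\Psi$ by local iterative procedures that operate on two adjacent rows at a time. For $\Phi$, process the rows of $D\in\MBPD(n)$ from bottom to top; within each row $r$, locate the heavy tiles ($\btile$s and $\mtile$s) and repeatedly apply a row-move---the transpose of an LLS-type column move---that removes one heavy tile from row $r$ and emits a biletter $(r,a)$ until row $r$ is free of heavy tiles, then pass to row $r-1$. For $\Psi$, define the reverse procedure: starting from the Rothe diagram $D_{\id}$, process the biletters of the input in the opposite order, each time applying the inverse row-move to reintroduce the corresponding heavy tile.

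I would carry out the verification in four steps. First, define the row-move precisely and check that it preserves the $\MBPD$ axioms (pipe connectedness, no pipes entering from the top, no pipes leaving to the left), so that every intermediate diagram is a valid $\MBPD$ with one fewer heavy tile in the row being processed. Second, show that the biletters emitted by $\Phi$ form a valid reverse compatible pair: the row indices are nonincreasing because rows are processed from the bottom up, and within each row the column indices appear in the correct decreasing $\RCP$-order by choosing an appropriate sweep order for the heavy tiles. Third, weight preservation is automatic since each row-move emits exactly one biletter of the form $(r,\cdot)$ while removing one heavy tile from row $r$. Fourth, associated-permutation preservation follows from the Demazure-product characterization of $w(B)$: each row-move corresponds locally to introducing a Demazure factor $s_a$ matching the emitted biletter, and telescoping yields $w(\Phi(D))=w(D)$, with the analogous statement for $\Psi$.

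The main obstacle is bijectivity in the presence of markings. One needs a local ``cancellation lemma'' guaranteeing that the inverse row-move exactly undoes the forward row-move, including the precise placement or removal of any $\mtile$ marking. The interaction of $\mtile$ tiles with $\btile$ tiles during adjacent-row moves is the delicate combinatorial heart of the argument; I would handle it by an exhaustive case analysis on the local tile configurations in the two affected rows, and would expect to need extra bookkeeping for redundant crossings (treated as bumps on the $\MBPD$ side and as repeated Demazure factors on the $\RCP$ side) that is not present in the Gao--Huang reduced setting.

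Finally, the restriction to the reduced case follows by inspecting the row-move: applied to a diagram with no $\mtile$ tile whose associated Demazure product is reduced (i.e., equal to the ordinary product), the move neither creates an $\mtile$ nor emits a redundant biletter, and conversely $\Psi$ applied to a reduced reverse compatible sequence never introduces a marked tile. This matches (the transpose of a variant of) the Gao--Huang bijection, as forecast in the introduction.
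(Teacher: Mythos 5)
Your outline matches the paper's architecture—bottom-to-top processing of heavy tiles, local two-row moves and their inverses, case analysis for the markings, and the observation that the reduced case uses only the unmarked cases—but as written it has two genuine gaps, and they are precisely where the real work lies.

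First, the biletter $(i,a)$ is not emitted by a single move on two adjacent rows. In the actual construction a heavy tile in row $i$ is cascaded \emph{downward} by a sequence $\fs_a f_{a-1}\dotsm f_{i+1}f_i$, each step moving the heavy tile from one row to the next, until a terminal condition is met in some row $a\ge i$; the second coordinate $a$ of the biletter is determined by where this cascade terminates, not chosen. Consequently, the claim that "the column indices appear in the correct decreasing $\RCP$-order by choosing an appropriate sweep order" does not hold up: even after fixing the right-to-left sweep within a row, one must \emph{prove} that popping the rightmost heavy tile yields $(i,a)$ and the next one yields $(i,a')$ with $a<a'$. This is the content of showing that $D$ admits the operator $\varphi_{i,a}=f_af_{a-1}\dotsm f_i\fs_a f_{a-1}\dotsm f_i$, which rests on a commutation lemma ($F_iF_{i+1}=F_{i+1}F_i$ when both targets exist, because their windows are disjoint) and an induction interleaving the two cascades. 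Nothing in your proposal supplies this.

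Second, well-definedness of $\Psi$ is not automatic from having inverse local moves. When you rebuild a diagram from $D_\id$ by processing the biletters in reverse, you must show at every intermediate stage that the required $e$- or $e^*$-target actually exists, i.e., that the prescribed inverse move is applicable. This again requires existence lemmas for $E$-targets plus a second commutation argument (interleaving the $e$-operators of consecutive row pushes), and it is independent of the "cancellation lemma" $E_rF_r=\id$, $F_rE_r=\id$ that you correctly identify as necessary. You single out the $\mtile$/$\btile$ interaction as the delicate heart; in fact that part is a finite (nine-case) verification, whereas the two well-definedness arguments above are the steps most likely to fail if the moves are not set up exactly right.
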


\section{Terminology on tiles}
\subsection{Tile notation}
Given $D\in \MBPD(n)$ and intervals $I,J\subset [n]$ we denote by $D_{I,J}$ the submatrix of entries $D_{i,j}$ for $i\in I$ and $j\in J$. We write $D_{r,J}$ when $I=\{r\}$ is a singleton row index. 

We say \textit{$D_{r,[b,c]}$ is a pipe segment} if the interior part 
$D_{r,[b+1,c-1]}$ consists solely of tiles $\htile$ and $\ptile$.
A \textit{kink} is a pipe segment $D_{r,[b,c]}$ such that $D_{r,b}=\rtile$ and $D_{r,c}=\jtile$.
\[
\begin{tikzpicture}[x=1.5em,y=1.5em,thick,color = blue]
\draw[step=1,gray,thin] (0,0) grid (7,1);
\draw[color=black, thick, sharp corners] (0,0) rectangle (7,1);
\draw(0.5,0)--(0.5,0.5)--(6.5,0.5)--(6.5,1.0);
\draw(1.5,0)--(1.5,1);
\draw(3.5,0)--(3.5,1);
\node [color=black] at (-.5,.5) {$r$};
\node [color=black] at (.5,1.5) {$b$};
\node [color=black] at (6.5,1.5) {$c$};
\end{tikzpicture}
\]

By definition, a non-blank tile
$D_{r, [b, b]}$ is always a pipe segment.
We also make the following observation
which 
says gluing two 
pipe segments produces a pipe segment.
\begin{lem}
\label{L: Combine pipe segment}
Suppose $D_{r, [a,b]}$ and $D_{r, [b,c]}$
are pipe segments, 
then so is $D_{r, [a, c]}$.
\end{lem}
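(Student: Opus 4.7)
The plan is to check directly the defining condition for the candidate combined pipe segment $D_{r,[a,c]}$, namely that its interior $D_{r,[a+1,c-1]}$ consists solely of $\htile$ and $\ptile$ tiles. First I would dispose of the degenerate cases $a=b$ and $b=c$, in which $D_{r,[a,c]}$ simply coincides with one of the given pipe segments. Assuming $a<b<c$, I would partition the interior index set as
\[
[a+1,c-1] \;=\; [a+1,b-1] \;\sqcup\; \{b\} \;\sqcup\; [b+1,c-1],
\]
where $[a+1,b-1]$ and $[b+1,c-1]$ are exactly the interiors of $D_{r,[a,b]}$ and $D_{r,[b,c]}$. By the two pipe-segment hypotheses, the tiles in these two ranges are already $\htile$ or $\ptile$, so the proof reduces to analyzing the isolated middle column $D_{r,b}$.

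The key step is to show that $D_{r,b}$ is itself $\htile$ or $\ptile$. Here I would argue by pipe connectivity inside the ambient marked bumpless pipedream. As the right endpoint of $D_{r,[a,b]}$, the tile $D_{r,b}$ sits to the right of its neighbor $D_{r,b-1}$ (which is $\htile$ or $\ptile$ and therefore carries an outgoing right-going pipe), so the MBPD connectivity axiom forces $D_{r,b}$ to connect to the left; symmetrically, being the left endpoint of $D_{r,[b,c]}$ forces $D_{r,b}$ to connect to the right. Inspecting the seven allowed tile types, the only ones that connect simultaneously to both the left and the right are $\htile$ and $\ptile$, so $D_{r,b}$ is of the required type. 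Combining with the two range pieces already handled, every tile of $D_{r,[a+1,c-1]}$ is $\htile$ or $\ptile$, which is the claim.

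The main obstacle, and the only genuinely subtle point, is this shared-endpoint tile $D_{r,b}$: the literal definition places no constraint on the endpoint tiles of a pipe segment, so its type has to be pinned down indirectly via the MBPD pipe-connectivity axiom. A small case analysis is needed when $b=a+1$ or $c=b+1$, where one side of $D_{r,b}$ has empty interior and the relevant horizontal pipe must be traced one further step along the pipedream to identify the horizontal tile that forces the required connection. Once this endpoint analysis is dispatched, the three-piece decomposition of the interior assembles to yield the lemma.
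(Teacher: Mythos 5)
Your proof isolates the shared endpoint $D_{r,b}$, shows it must connect to both the left and the right, and concludes it is $\htile$ or $\ptile$; this is exactly the paper's argument, and in the generic case $a+1<b<c-1$ your reasoning is correct and essentially identical to theirs. The one divergence is how the two connectivity facts for $D_{r,b}$ are obtained. The paper reads them directly off the two pipe-segment hypotheses: the left endpoint of a pipe segment connects to the right (so $D_{r,b}$ is $\rtile$, $\htile$, or $\ptile$) and the right endpoint connects to the left (ruling out $\rtile$) --- this endpoint connectivity is part of what the authors mean by ``pipe segment,'' as their remark on droops makes explicit. You instead derive both facts from the neighbouring \emph{interior} tiles together with the MBPD connectivity axiom, and that route genuinely breaks down when $b=a+1$ or $c=b+1$: one of the two interiors is then empty, the interior-only reading of the definition places no constraint at all on $D_{r,b}$ from that side, and there is no pipe to ``trace one further step'' (nothing in the interior-only reading would prevent, say, $D_{r,a}=\btile$ and $D_{r,a+1}=\rtile$ from forming a ``pipe segment,'' which would make the lemma false). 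So your flagged case analysis is not actually closed by the fix you sketch; the clean repair is the paper's, namely to invoke the inward connectivity of the endpoints that is built into the notion of pipe segment, which applies uniformly whether or not the interiors are empty.
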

\begin{proof}
For $a < j < c$, $D_{r, j}$ is clearly
$\htile$ or $\ptile$ if $j \neq b$.
Since $D_{r, [b,c]}$ is a pipe segment, 
$D_{r, b}$ can be $\rtile$, $\htile$
or $\ptile$.
Since $D_{r, [a,b]}$ is a pipe segment, 
we know $D_{r, b} \neq \rtile$,
so it is $\htile$ or $\ptile$.
\end{proof}

A \textit{light sequence} in $D\in \MBPD(n)$ is a set of tiles of the form $D_{r,[b,c]}$ 
which consists solely of light tiles. The RJ subsequence of a light sequence is the sequence of tiles obtained by only keeping the $\rtile$'s and $\jtile$'s. They come in four flavors.
\begin{enumerate}
    \item Paired: Some number of copies of a pair given by an $\rtile$ followed by a $\jtile$.
    \item Type J: A $\jtile$ followed by a paired sequence.
    \item Type R: A paired sequence followed by an $\rtile$.
    \item Type JR: A $\jtile$ followed by a paired sequence followed by a $\rtile$.
\end{enumerate}

\begin{rem} \label{R:light sequence} Consider a light sequence $S$ in a row of some $D\in\MBPD(n)$.
\begin{enumerate}
    \item \label{I:light after no right} If $S$ immediately follows a $\btile$, $\vtile$, $\jtile$, or $\mtile$ then $S$ is paired or type R.
    \item \label{I:light after right} If $S$ immediately follows a $\htile$, $\ptile$, or $\rtile$ then $S$ is type J or type JR.
    \item \label{I:light before no left} If $S$ immediately precedes a $\btile$, $\vtile$, or $\rtile$ then $S$ is paired or of type $J$.
    \item \label{I:light before left} If $S$ immediately precedes a $\htile$, $\ptile$, $\jtile$, or $\mtile$ then $S$ must be type R or type JR.
\end{enumerate}
\end{rem}
The following remark is a useful observation on MBPDs.
\begin{rem} \label{R:rtile right of heavy}
For any $D \in\MBPD(n)$ and heavy tile in $D$, there is a $\rtile$ in its row to the right and a $\rtile$ in its column below.
\end{rem}

\subsection{Drooping} 
For a row index $r$ and column indices $1\le b<d\le n$,
the $(r,[b,d])$-droop and $(r,[b,d])$-undroop are operations that change an MBPD into another,
only changing tiles in the ``(un)droop rectangle" $[r,r+1] \times [b,d]$.

We say that $D\in\MBPD(n)$ admits the \textit{$(r,[b,d])$-droop} if 
\begin{itemize}
    \item The droop rectangle contains only light tiles except possibly a $\btile$
    at $(r+1,d)$. 
    \item $D_{r,[b,d]}$ is a pipe segment.
    \item $D_{r+1,[b+1,d-1]}$ is a paired light sequence.
    \item $D_{r,b}\ne \htile$ and $D_{r,d}\ne \ptile$.
   
\end{itemize}
\begin{rem} The pipe segment implies that $D_{r,b}$ is connected to the right
and $D_{r,d}$ is connected to the left. The paired light sequence implies that
$D_{r+1,b}$ is not connected to the right and $D_{r+1,d}$ is not connected to the left.
We deduce that if $D$ admits the $(r,[b,d])$-droop then
\begin{itemize}
\item $D_{r,b}$ is $\rtile$ or $\ptile$ (it connects to the right and down).
\item $D_{r,d}$ is $\jtile$ or $\htile$ (it connects to the left and not down).
\item $D_{r+1,b}$ is $\jtile$ or $\vtile$, (it connects up and not to the right).
\item $D_{r+1,d}$ is $\btile$ or $\rtile$ (it does not connect left nor up).
\end{itemize}
\end{rem}
If $D$ admits the $(r,[b,d])$-droop then we may produce $D'\in\MBPD(n)$
as follows; we say that $D\to D'$ is the $(r,[b,d])$-droop.

In the following pictures we only draw the parts of the upper right and lower left corner tiles $D_{r,d}$ and $D_{r+1,b}$ that change during the (un)droop; the other parts remain the same.

The pipe segment $D_{r,[b,d]}$, which connects down to $D_{r+1,b}$,  ``droops" to a pipe segment $D'_{r+1,[b,d]}$ which is connected upwards to $D'_{r,d}$. In columns $x$ of $D$ for $b<x<d$, the vertical pipes ($x$ such that $b<x<d$ and $D_{r,x}=D_{r+1,x}=\ptile$) are unchanged. Each kink in row $r+1$ in $D$ between columns $b$ and $d$, is shifted up into row $r$ in $D'$. A shifted kink is shaded gray in the following picture of a droop.
\[
\begin{tikzpicture}[x=1.5em,y=1.5em,thick,color = blue]
\draw[step=1,gray,thin] (0,0) grid (7,2);
\draw[color=black, thick, sharp corners] (0,0) rectangle (7,2);
\draw [color=gray, fill=gray] (1,0) rectangle (5,1);
\draw(6.5,1.5)--(0.5,1.5)--(.5,.5);
\draw(1.5,0.0)--(1.5,0.5)--(4.5,0.5)--(4.5,2.0);
\draw(2.5,0.0)--(2.5,2.0);
\draw(5.5,0.0)--(5.5,2.0);
\node [color=black] at (-.5,1.5) {$r$};       
\node [color=black] at (-1,.5) {$r+1$};
\node[color=black] at (.5,2.5) {$b$};
\node[color=black] at (6.5,2.5) {$d$};
\node [color=black] at (3.5,-1) {$D$};
\node[color=black] at (7.6,1) {$\rightarrow$};
\end{tikzpicture} 
\begin{tikzpicture}[x=1.5em,y=1.5em,thick,color = blue]
\draw[step=1,gray,thin] (0,0) grid (7,2);
\draw[color=black, thick, sharp corners] (0,0) rectangle (7,2);
\draw[color=gray,fill=gray] (1,1) rectangle (5,2);
\draw(6.5,1.5)--(6.5,0.5)--(.5,.5);
\draw(1.5,0.0)--(1.5,1.5)--(4.5,1.5)--(4.5,2.0);
\draw(2.5,0.0)--(2.5,2.0);
\draw(5.5,0.0)--(5.5,2.0);
\node [color=black] at (3.5,-1) {$D'$};
\end{tikzpicture}
\]
The naming of droops depends on the upper left and lower right corner tiles of the rectangle. The arrows indicates how the upper left and lower right tiles change during the droop. The upper left tile at $(r,b)$ ``loses an $\rtile$'' while the lower right tile at $(r+1,d)$ ``gains a $\jtile$''. 
\begin{align*}
\begin{array}{c||cc} 
D_{r,b} \backslash D_{r+1,d} & \btile\to \jtile & \rtile\to \ptile \\ \hline \hline
\rtile \to \btile& \text{standard} & \text{fuse}  \\
\ptile\to\jtile &\text{split} &\text{split-fuse}
\end{array}
\end{align*}
The above droop is standard. We imagine in $D$ that the pipe in row $r$ is a blue rope which is pinned in the center of the upper right and lower left boxes, and we are pulling the rope taut and holding it at the center of the upper left box. Then we let go.
The rope falls to the floor (middle of row $r+1$). After it falls the rope forms a $\jtile$ in the lower right.

Here is a fuse droop: the $\rtile$ in the lower right corner, when drooped upon, acquires a $\jtile$ which is fused to it and becomes a $\ptile$.
\[
\begin{tikzpicture}[x=1.5em,y=1.5em,thick,color = blue]
\draw[step=1,gray,thin] (0,0) grid (7,2);
\draw[color=black, thick, sharp corners] (0,0) rectangle (7,2);
\draw(6.5,1.5)--(0.5,1.5)--(.5,.5);
\draw(1.5,0.0)--(1.5,0.5)--(4.5,0.5)--(4.5,2.0);
\draw(2.5,0.0)--(2.5,2.0);
\draw(5.5,0.0)--(5.5,2.0);
\draw(7.0,0.5)--(6.5,0.5)--(6.5,0.0);
\node[color=black] at (7.6,1) {$\rightarrow$};
\end{tikzpicture} 
\begin{tikzpicture}[x=1.5em,y=1.5em,thick,color = blue]
\draw[step=1,gray,thin] (0,0) grid (7,2);
\draw[color=black, thick, sharp corners] (0,0) rectangle (7,2);
\draw(6.5,1.5)--(6.5,0.5)--(.5,.5);
\draw(1.5,0.0)--(1.5,1.5)--(4.5,1.5)--(4.5,2.0);
\draw(2.5,0.0)--(2.5,2.0);
\draw(5.5,0.0)--(5.5,2.0);
\draw(7.0,0.5)--(6.5,0.5)--(6.5,0.0);
\end{tikzpicture}
\]
For the split droop, we imagine that we cut the $\ptile$ so it falls into the superposition of a $\jtile$ and an $\rtile$. The droop loses the $\rtile$ and produces a $\jtile$ in the lower right corner.
\[
\begin{tikzpicture}[x=1.5em,y=1.5em,thick,color = blue]
\draw[step=1,gray,thin] (0,0) grid (7,2);
\draw[color=black, thick, sharp corners] (0,0) rectangle (7,2);
\draw(6.5,1.5)--(0.5,1.5)--(.5,.5);
\draw(1.5,0.0)--(1.5,0.5)--(4.5,0.5)--(4.5,2.0);
\draw(2.5,0.0)--(2.5,2.0);
\draw(5.5,0.0)--(5.5,2.0);
\draw(0.0,1.5)--(0.5,1.5)--(0.5,2.0);
\node[color=black] at (7.6,1) {$\rightarrow$};
\end{tikzpicture} 
\begin{tikzpicture}[x=1.5em,y=1.5em,thick,color = blue]
\draw[step=1,gray,thin] (0,0) grid (7,2);
\draw[color=black, thick, sharp corners] (0,0) rectangle (7,2);
\draw(6.5,1.5)--(6.5,0.5)--(.5,.5);
\draw(1.5,0.0)--(1.5,1.5)--(4.5,1.5)--(4.5,2.0);
\draw(2.5,0.0)--(2.5,2.0);
\draw(5.5,0.0)--(5.5,2.0);
\draw(0.0,1.5)--(0.5,1.5)--(0.5,2.0);
\end{tikzpicture}
\]
Finally, a split-fuse droop, which combines the splitting and fusing.
\[
\begin{tikzpicture}[x=1.5em,y=1.5em,thick,color = blue]
\draw[step=1,gray,thin] (0,0) grid (7,2);
\draw[color=black, thick, sharp corners] (0,0) rectangle (7,2);
\draw(6.5,1.5)--(0.5,1.5)--(.5,.5);
\draw(1.5,0.0)--(1.5,0.5)--(4.5,0.5)--(4.5,2.0);
\draw(2.5,0.0)--(2.5,2.0);
\draw(5.5,0.0)--(5.5,2.0);
\draw(0.0,1.5)--(0.5,1.5)--(0.5,2.0);
\draw(7.0,0.5)--(6.5,0.5)--(6.5,0.0);
\node[color=black] at (7.6,1) {$\rightarrow$};
\end{tikzpicture} 
\begin{tikzpicture}[x=1.5em,y=1.5em,thick,color = blue]
\draw[step=1,gray,thin] (0,0) grid (7,2);
\draw[color=black, thick, sharp corners] (0,0) rectangle (7,2);
\draw(6.5,1.5)--(6.5,0.5)--(.5,.5);
\draw(1.5,0.0)--(1.5,1.5)--(4.5,1.5)--(4.5,2.0);
\draw(2.5,0.0)--(2.5,2.0);
\draw(5.5,0.0)--(5.5,2.0);
\draw(0.0,1.5)--(0.5,1.5)--(0.5,2.0);
\draw(7.0,0.5)--(6.5,0.5)--(6.5,0.0);
\end{tikzpicture}
\]
By definition, $D\to D'$ is a \textit{$(r,[b,d])$-undroop} if and only if 
$D'\to D$ is a $(r,[b,d])$-droop. 
Fusing (resp. splitting) undrooping is inverse to splitting (resp. fusing) drooping.

Explicitly, a \textit{$(r,[b,d])$-undroop} $D\to D'$ is defined when 
\begin{itemize}
    \item The undroop rectangle contains only light tiles except possibly a $\btile$ 
    at $(r,b)$.
    \item $D_{r+1,[b,d]}$ is a pipe segment.
    \item $D_{r,[b+1,d-1]}$ is a paired light sequence.
    \item $D_{r+1,d}\ne \htile$, $D_{r+1,b}\ne\ptile$.
\end{itemize}
\begin{rem}
If $D$ admits a $(r,[b,d])$-undroop then
\begin{itemize}
\item $D_{r+1,d}$ is $\jtile$ or $\ptile$ (it connects to the left and up).
\item $D_{r+1,b}$ is $\htile$ or $\rtile$ (it connects to the right and not up).
\item $D_{r,d}$ is $\vtile$ or $\rtile$ (it connects down and not to the left).
\item $D_{r,b}$ is $\btile$ or $\jtile$ (it does not connect to the right nor down).
\end{itemize}
\end{rem}

It is straightforward to verify that the operations of drooping and undrooping, map MBPDs to MBPDs. 

The following lemma is immediate from the definitions.

\begin{lem} Let $D\in\MBPD(n)$. 
\begin{enumerate}
\item If $D$ admits the $(r,[b,d])$-droop producing $D'$, then
$D'$ admits the $(r,[b,d])$-undroop recovering $D$.
\item If $D'$ admits the $(r,[b,d])$-undroop producing $D$, then
$D$ admits the $(r,[b,d])$-droop recovering $D'$.
\end{enumerate}
\end{lem}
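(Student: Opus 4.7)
The statement claims that $(r,[b,d])$-drooping and $(r,[b,d])$-undrooping are inverse operations on $\MBPD(n)$, and the authors flag it as ``immediate from the definitions'', so my plan is a direct case-wise unpacking. The two items are mirror images of each other under swapping rows $r$ and $r+1$, so I would prove item (1) in detail and note that (2) is verbatim symmetric.

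For (1), assume $D$ admits the $(r,[b,d])$-droop yielding $D'$. I would first verify that $D'$ satisfies the four conditions for the $(r,[b,d])$-undroop. The heaviness condition: inspecting the $2\times 2$ table of droop flavors, after any flavor $D'_{r+1,d}$ is $\jtile$ or $\ptile$ (hence light), while $D'_{r,b}$ is $\btile$ or $\jtile$; the interior tiles remain light since interior vertical pipes stay fixed and the kinks merely shift rows, so the only possible heavy tile in the undroop rectangle is a $\btile$ at $(r,b)$, exactly as allowed. The shape conditions: by construction, the pipe segment $D_{r,[b,d]}$ drops rigidly into row $r+1$, giving a pipe segment $D'_{r+1,[b,d]}$, while each kink of the paired light sequence $D_{r+1,[b+1,d-1]}$ is lifted rigidly into row $r$, yielding a paired light sequence $D'_{r,[b+1,d-1]}$. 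The endpoint inequalities $D'_{r+1,d}\neq\htile$ and $D'_{r+1,b}\neq\ptile$ follow from the explicit list of corner possibilities in the post-droop remark.

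To see that the $(r,[b,d])$-undroop applied to $D'$ returns $D$, I would pair off the four droop flavors with four undroop flavors by inverting each corner transformation: standard droop inverts to standard undroop, fuse droop inverts to split undroop, split droop inverts to fuse undroop, and split-fuse inverts to split-fuse (as already noted in the text). The interior shift of kinks and the ``lose a connection / gain a connection'' description at the four corners are each visibly self-inverse, and the interior vertical pipes are untouched; hence undrooping $D'$ reconstructs $D$ tile by tile. Part (2) is the same argument with the roles of rows $r$ and $r+1$ interchanged and droop/undroop swapped.

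The only real bookkeeping obstacle is confirming the flavor-pairing above: specifically, checking that a corner transformation such as $\rtile\to\ptile$ at $(r+1,d)$ in a fuse droop is inverted by the $\ptile\to\rtile$ transformation in a split undroop, and similarly for the $(r,b)$ corner and the other three flavors. Once this $4\times 4$ matching of corner transformations is written down, together with the observation that the remaining eight tile changes inside the rectangle (two interior endpoint shifts and the rigid kink shift) are manifestly self-inverse, the lemma follows without further work.
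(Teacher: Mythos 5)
Your proposal is correct and matches the paper, which offers no written proof beyond declaring the lemma immediate from the definitions; your case-check of the four undroop conditions and the $2\times2$ flavor pairing is exactly the verification that declaration compresses. Note only that the paper literally \emph{defines} the $(r,[b,d])$-undroop by the condition that its reverse is a $(r,[b,d])$-droop, so item (1) is tautological from that definition, and the substantive content of your argument is really the (worthwhile) check that the paper's explicit list of undroop conditions agrees with that definition.
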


\subsection{Doublecrosses}
For $D\in \MBPD(n)$ the subdiagram $D_{[r,r+1],[b,d]}$ is a \textit{doublecross} if 
$D_{r,[b,d]}$ and $D_{r+1,[b,d]}$ are pipe segments, $D_{r,b}=\rtile$, 
and $D_{r+1,d}=\jtile$.
\[
\begin{tikzpicture}[x=1.5em,y=1.5em,thick,color = blue]
\draw[step=1,gray,thin] (0,0) grid (5,2);
\draw[color=black, thick, sharp corners] (0,0) rectangle (5,2);
\draw (.5,0)--(.5,1.5)--(5,1.5);
\draw (0,.5)--(4.5,.5)--(4.5,2);
\draw (1.5,0)--(1.5,2);
\draw (2.5,0)--(2.5,2);
\node[color=black] at (-.5,1.5) {$r$};
\node[color=black] at (-1.0,0.5) {$r+1$};
\node[color=black] at (0.5,2.5) {$b$};
\node[color=black] at (4.5,2.5) {$d$};
\end{tikzpicture}
\]
It is not hard to deduce that 
if $D_{[r, r+1], [b,d]}$
is a double cross, 
then $D_{r+1, b} = D_{r, d} = \ptile$.
This is so named because the pipe through
$(r,b)$ and the pipe through $(r+1, d)$
cross twice at $(r,d)$ and $(r+1,b)$.

\begin{rem}\label{R:doublecrosses unique}
There is at most one doublecross $D_{[r,r+1],[b,d]}$ for each fixed pair $(r,b)$ 
and also at most one such doublecross for a fixed pair $(r,d)$.
\end{rem}

We describe how to find the ``$d$'' given $(r,b)$.
\begin{lem}
\label{L: Find DC}
Suppose $D_{[r, r+1], [b,d]}$
is a double cross.
Then $d$ is the smallest number 
such that $d > b$ and $D_{r+1, d} = \jtile$.
\end{lem}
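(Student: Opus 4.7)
The plan is to unpack the definitions directly; there is essentially no obstacle here. By the definition of a doublecross, $D_{r+1,[b,d]}$ is a pipe segment with $D_{r+1,d} = \jtile$. So the claim has two parts: first, that a $\jtile$ does occur at column $d$ in row $r+1$ (immediate from the hypothesis), and second, that no $\jtile$ occurs in row $r+1$ at any column strictly between $b$ and $d$.

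For the second part, I would invoke the definition of a pipe segment: $D_{r+1,[b,d]}$ being a pipe segment means the interior $D_{r+1,[b+1,d-1]}$ consists solely of $\htile$ and $\ptile$ tiles. In particular none of these interior tiles can be a $\jtile$, so there is no column $d'$ with $b < d' < d$ satisfying $D_{r+1,d'} = \jtile$.

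Putting these together, $d$ is the smallest integer larger than $b$ with $D_{r+1,d} = \jtile$. The argument is a single paragraph of definition-chasing; no case analysis, no appeal to Lemma~\ref{L: Combine pipe segment}, and no use of Remark~\ref{R:doublecrosses unique} is required. The only thing worth noting is that the result gives a procedural way to recover $d$ from $(r,b)$, which is presumably what justifies stating it as a lemma (it will be used later to locate doublecrosses algorithmically).
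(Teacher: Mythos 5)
Your proof is correct and is essentially the paper's own argument: the paper's entire proof is the one-line observation that since $D_{r+1,[b,d]}$ is a pipe segment, there is no $\jtile$ in $D_{r+1,[b,d)}$, which is exactly your definition-chasing via the interior tiles being $\htile$ or $\ptile$, combined with $D_{r+1,d}=\jtile$ from the definition of a doublecross.
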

\begin{proof}
Since $D_{r+1, [b,d]}$ is a pipe segment,
there is no $\jtile$ in $D_{r+1, [b, d)}$.
\end{proof}

\section{$\ftarg$-moves}
\label{sec:f-moves}

In this section and the next, we describe the basic combinatorial operations defined on MBPDs and their inverses  that form the building blocks of our bijection.
\subsection{$f$-targets and $f^*$-targets}
\label{SS:target}
Let $D\in \MBPD(n)$. Say that $(r,c)$ is an \textit{$f$-target} of $D$ if the conditions (f1), (f2), and (f3) hold.
\begin{itemize}
    \item[(f1)] $D_{r,c}$ is the rightmost heavy tile in row $r$.
    \item[(f2)] There is an index $c'>c$ such that $D_{r+1,c'}=\jtile$.
    Let $c'$ be minimum with this property.
    \item[(f3)] All tiles $D_{r+1,j}$ are light for $j<c'$.
\end{itemize}

We say that $(r,c)$ is an \textit{$f^*$-target} of $D$ if the conditions (f1), (f*2), and (f3) hold, where
\begin{itemize}
    \item[(f*2)] There is no $\jtile$ nor $\mtile$ in row $r+1$ to the right of column $c$. In this case let $c'$ be the maximum index such that $D(r,c')=\rtile$. This $\rtile$ exists by Remark \ref{R:rtile right of heavy}.
\end{itemize}
By abuse of language we will also say that $(D,(r,c))$ is an $f$-target (resp. $f^*$-target)
to mean that $D\in\MBPD(n)$ and $(r,c)$ is an $f$-target (resp. $f^*$-target) of $D$.
We will write $F$ to mean either $f$ or $f^*$.
So $(D,(r,c))$ is an $F$-target means it is either an $f$-target or $f^*$-target.

We define the \textit{\window} of an $F$-target $(D,(r,c))$
to be the two-row rectangle $[r,r+1] \times [b,c']$ where 
\begin{align}
    \label{E:f b} 
    b &= \begin{cases}
        c & \text{if $D_{r,c}=\btile$} \\
        \text{maximum $b<c$ with $D_{r,b}=\rtile$} & \text{if $D_{r,c}=\mtile$}
    \end{cases}
\end{align}
and $c'$ is defined by (f2) in the case of an $f$-target and (f*2) in the case of an $f^*$-target.

The \textit{maximum $F$-target} of $D\in\MBPD(n)\setminus \{D_\id\}$ is by definition its bottommost then rightmost heavy tile (say $(r,c)$), which is easily verified to be an $F$-target of $D$. If not explicitly specified otherwise, if we refer to the target of $D$ we mean its maximum $F$-target. We say that an MBPD is \textit{$F$-terminal} if its maximal 
$F$-target is an $f^*$-target and $F$-nonterminal otherwise.

For any $f$-target $(D,(r,c))$ we shall define an $f$-move $D\mapsto f_r(D)$ under which a heavy tile is removed from the $r$-th row and a heavy tile is put into the $(r+1)$-th row. In this case $w(D)=w(f_r(D))$.
If $(D,(r,c))$ is an $f^*$-target we shall define an $f^*$-move
$D\mapsto f_r^*(D)$ in which a heavy tile is removed from the $r$-th row. In this case 
$w(D)=w(f^*_r(D))*s_r$. 
Be aware that if we know $w(D)$,
we cannot determine $w(f^*_r(D))$
which could be either $w(D)$ or $w(D) s_r$ 
(see Proposition~\ref{P: F permutation}).
These moves are defined in \S \ref{SS:do F move} depending on a number of cases which are described below.

\subsection{Two trichotomies of cases for $F$-moves}
There are two trichotomies for an $F$-target $(D,(r,c))$, 
giving nine cases in all. They are called left and right because they describe the left and right sides of the window respectively.

The left trichotomy for the $F$-target $(D,(r,c))$ asserts
that exactly one of $\Blank$, $\Cross$, or $\NCross$ holds.
\begin{itemize}
    \item[($\Blank$)] (Blank): This holds if $D_{r,c}=\btile$.
    \item When not in Case $\Blank$, $D_{r,c}=\mtile$. Let $b$ be as in \eqref{E:f b}.
    \begin{itemize}
        \item[($\Cross$)] (Crossing): 
        $D_{r+1,[b,c]}$ is a pipe segment.
        This is so named because in this case the pipe through $(r,c)$ goes to the left and then crosses vertically with the above pipe segment at $D_{r+1,b}=\ptile$.
        \item[($\NCross$)] (Noncrossing): Neither Case $\Blank$ nor Case $\Cross$ holds.
    \end{itemize}
\end{itemize}
\[
\begin{matrix} \\
\begin{tikzpicture}[x=1.5em,y=1.5em,thick,color = blue]
\draw[step=1,gray,thin] (0,0) grid (4,2);
\draw[color=black, thick, sharp corners] (0,0) rectangle (4,2);
\draw(3.5,2.0)--(3.5,1.5)--(.5,1.5)--(.5,0);
\draw(4.0,.5)--(0,.5);
\draw(1.5,2.0)--(1.5,0);
\node at (3.5,1.5) {$\bullet$};
\node[color=black] at (.5,-.5) {$b$};
\node[color=black] at (3.5,-.5) {$c$};
\node[color=black] at (-.5,1.5) {$r$};
\node[color=black] at (-1,.5) {$r+1$};
\end{tikzpicture} \\
\text{\hphantom{xxxx}Case $\Cross$}
\end{matrix} \\[2mm]
\qquad
\begin{matrix} \\
\begin{tikzpicture}[x=1.5em,y=1.5em,thick,color = blue]
\draw[step=1,gray,thin] (0,0) grid (4,2);
\draw[color=black, thick, sharp corners] (0,0) rectangle (4,2);
\draw(3.5,2.0)--(3.5,1.5)--(.5,1.5)--(.5,0);
\draw(4.0,.5)--(1.5,.5);
\draw(1.5,.5)--(1.5,0);
\node at (3.5,1.5) {$\bullet$};
\node[color=black] at (.5,-.5) {$b$};
\node[color=black] at (3.5,-.5) {$c$};
\node[color=black] at (-.5,1.5) {$r$};
\node[color=black] at (-1,.5) {$r+1$};
\end{tikzpicture} \\
\text{\hphantom{xxxx}Case $\NCross$}
\end{matrix} \\[2mm]
\]

The right  trichotomy says that for an $F$-target $(D,(r,c))$, 
exactly one $\Terminal$, $\DCross$, or $\Ordinary$ holds.
\begin{itemize}
    \item [($\Terminal$)] (Terminal) The tile $(r,c)$ is an $f^*$-target of $D$.
\begin{align*}
&\begin{tikzpicture}[x=1.5em,y=1.5em,thick,color = blue]
\draw[step=1,gray,thin] (0,0) grid (6,2);
\draw[color=black, thick, sharp corners] (0,0) rectangle (6,2);
\draw(6,1.5)--(3.5,1.5)--(3.5,0);
\draw(6,.5)--(0,.5);
\draw(4.5,2)--(4.5,0);
\draw(2.5,2)--(2.5,0);
\draw(1.5,2)--(1.5,0);
\node[color=black] at (.5,-.5) {$c$};
\node[color=black] at (3.5,-.5) {$c'$};
\node[color=black] at (5.5,-.5) {$n$};
\node[color=black] at (6.5,1.5) {$r$};
\node[color=black] at (7,.5) {$\,r+1$};
\draw[color=red, thick, sharp corners] (0,1) rectangle (1,2);
\end{tikzpicture}
\\
&\hphantom{xxxxxx}\text{Case $\Terminal$}
\end{align*}
    \item The complement of  Case $\Terminal$ : $(r,c)$ is an $f$-target of $D$. 
    There are two subcases.
    \begin{itemize}
        \item[($\DCross$)] (Doublecross): There is a doublecross $D_{[r,r+1], [d,c']}$ for some $d$ with $c<d<c'$.
        \item[($\Ordinary$)] (Ordinary): Cases $\Terminal$ and $\DCross$ do not hold.
    \end{itemize}
\end{itemize}
The doublecross requires a $\jtile$ in row $r+1$ not allowed by Case $\Terminal$. Such a doublecross is unique by Remark \ref{R:doublecrosses unique}.

Given an $\ftarg$ move, define the right droop column index $\rho$ by
\begin{align}\label{E:F rho}
\rho = \begin{cases}
    c' & \text{for Cases $\Terminal$ or $\Ordinary$} \\
    d & \text{for Case $\DCross$.}
\end{cases}
\end{align}
To be parallel with the upcoming definitions for $E$-moves, define
the left droop column index $\lambda$ by
\begin{align}\label{E:F lambda}
\lambda = c
\end{align}
which is the column index of the $F$-target.

The following Lemma is used to prove the well-definedness of the $\ftarg$-moves to be defined later.

\begin{lem} \label{L:TDO}
Suppose $(D,(r,c))$ is an $F$-target.
\begin{enumerate}
\item \label{I:bottom left} $D_{r+1,c}$ is either $\htile$ or $\rtile$.
\item \label{I:pipe} $D_{r+1,[c,c']}$ is a pipe segment.
\item \label{I:bottom right} In Case $\Terminal$, $D_{r+1,c'}=\ptile$. Moreover there are no $\rtile$s in row $r+1$ to the right of column $c$.
\item \label{I:light} $S=D_{r,[c+1,c'-1]}$ is a light sequence that is either paired or of type $R$.
\begin{enumerate}
  \item \label{I:typeR} $S$ is of type $R$ if and only if $D_{r,c'}=\ptile$. In this case,
    suppose $S$ has last $\rtile$ at $D_{r,d}$ and we are not in Case $\Terminal$.
    Then $D_{[r,r+1],[d,c']}$ is a doublecross (Case $\DCross$ holds).
    \item \label{I:paired} $S$ is paired if and only if $D_{r,c'}$ is $\vtile$ or $\rtile$. Then either Case $\Terminal$  or Case $\Ordinary$ holds.
\end{enumerate}
\end{enumerate}
Moreover the $(r,[c,\rho])$-droop applies (after removing the marking if $D_{r,c}=\mtile$).
\end{lem}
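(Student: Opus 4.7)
The plan is to verify each part of the lemma together with the final droop-applicability claim by local tile analysis, working from the $F$-target conditions (f1), (f2)/(f*2), (f3), MBPD pipe connectivity, and Remark~\ref{R:light sequence}. I would prove the parts in the order $(1) \to (2) \to (4) \to (3) \to$ droop, so that later parts can freely invoke earlier ones.

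For (1), I combine (f3)---which gives $D_{r+1,c}$ light since $c < c'$ in both the $f$ and $f^*$ cases---with the observation that $D_{r,c}$ (either $\btile$ or $\mtile$) does not connect down; hence $D_{r+1,c}$ does not connect up, and among light tiles only $\htile$ and $\rtile$ remain. For (2), I would induct on $j = c, c+1, \ldots, c'-1$: the hypothesis $D_{r+1,j-1}$ connects right forces $D_{r+1,j}$ to connect left, and lightness (by (f3)) together with the minimality of $c'$ in the $f$ case (or (f*2) in the $f^*$ case) rules out $\jtile$, leaving $\htile$ or $\ptile$, which continue to connect right and sustain the induction.

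For (4), the light-sequence statement follows from (f1), and Remark~\ref{R:light sequence}(\ref{I:light after no right}) applied to the preceding heavy tile $D_{r,c}$ narrows $S$ to paired or type~$R$. I enumerate the possibilities for the successor $D_{r,c'}$: in the $f$ case, $D_{r+1,c'} = \jtile$ connects up, forcing $D_{r,c'}$ light and connecting down, hence $D_{r,c'} \in \{\vtile, \ptile, \rtile\}$; in the $f^*$ case, $D_{r,c'} = \rtile$ by definition. Applying Remark~\ref{R:light sequence}(\ref{I:light before no left}, \ref{I:light before left}) to $D_{r,c'}$ then yields both biconditionals in (4a) and (4b). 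The doublecross in (4a) is assembled from its two pipe segments: in row $r$, tiles strictly between $d$ (the last $\rtile$ of $S$) and $c'$ lie beyond the final R/J of the type-R sequence and are forced into $\{\htile, \ptile\}$ by connectivity to $D_{r,c'} = \ptile$; the row-$(r+1)$ segment is inherited from (2). The non-existence of a doublecross in (4b) is because any $\rtile$ inside a paired $S$ is immediately followed in $S$ by a $\jtile$, which would lie in the interior of any candidate pipe segment $D_{r,[d,c']}$.

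For (3) in Case $\Terminal$, $D_{r,c'}=\rtile$ connects down so $D_{r+1,c'}$ connects up; by (2) and connectivity it also connects left; (f*2) and lightness then pin it as $\ptile$. The ``no $\rtile$ right of $c$'' claim follows by extending (2)'s rightward induction past $c'$: since $D_{r+1,c'}$ connects right, the connecting-left argument combined with (f*2) forces $D_{r+1,j} \in \{\htile, \ptile\}$ for all $c' \leq j \leq n$. Finally, for the droop applicability I would traverse the nine cases of the two trichotomies and verify the four droop conditions for $(r, [c, \rho])$ with $\rho$ as in~\eqref{E:F rho}: the interior structure of $D_{r,[c,\rho]}$ comes from Part (4) together with the particular choice of $\rho$ (in Case $\DCross$, $\rho=d$ is the last $\rtile$ of $S$, so the doublecross geometry of $[d,c']$ truncates correctly); the paired light structure of $D_{r+1,[c+1,\rho-1]}$ comes from (2); and the endpoint conditions $D_{r,c}\ne\htile$ (after demarking $\mtile$ to $\jtile$ if needed) and $D_{r,\rho}\ne\ptile$ are read off from the left and right trichotomies. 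I anticipate the main obstacle is the careful bookkeeping of the droop rectangle across the nine cases, especially reconciling the pre-existing pipe segment $D_{r+1,[b,c]}$ built into Case $\Cross$ with the one from (2), and verifying the light-tile condition (with at most a $\btile$ at $(r+1,\rho)$) after demarking $D_{r,c}$.
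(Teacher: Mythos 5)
Your treatment of parts (1)--(4) is essentially the paper's own argument: (1) from (f3) together with the heavy tile at $(r,c)$ blocking an upward connection; (2) by following the pipe rightward in row $r+1$ and excluding $\jtile$ and $\mtile$ via the minimality of $c'$, (f3), and (f*2); (4) from (f1), Remark~\ref{R:light sequence}, and the case analysis of $D_{r,c'}$, with the doublecross in (4a) assembled from the two pipe segments exactly as in the paper and a correct (slightly more explicit than the paper's) argument that a paired $S$ excludes Case $\DCross$. Part (3) also matches.

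The last step is where your verification would not close. The operation that actually applies to an $F$-target (and the one the paper's proof establishes, and the one $F_r$ performs in \S\ref{SS:do F move}) is the $(r,[c,\rho])$-\emph{undroop}; the word ``droop'' in the lemma statement conflicts with the definition of $F_r$ and with the rest of the paper. You check the droop conditions --- a pipe segment $D_{r,[c,\rho]}$, a paired light sequence $D_{r+1,[c+1,\rho-1]}$, and the endpoint conditions $D_{r,c}\ne\htile$, $D_{r,\rho}\ne\ptile$ --- and these genuinely fail for an $F$-target: after demarking, $D_{r,c}$ is $\btile$ or $\jtile$, which connects neither right nor down, so $D_{r,[c,\rho]}$ is not a pipe segment; if $D_{r,c}=\btile$ the heavy tile sits at the \emph{top-left} corner, whereas the droop only tolerates a $\btile$ at $(r+1,\rho)$; and $S$ may contain kinks, so the interior of $D_{r,[c,\rho]}$ need not consist of $\htile$ and $\ptile$. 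What you have actually assembled in (1)--(4) is precisely the undroop configuration: $D_{r,c}\in\{\btile,\jtile\}$ after demarking, $D_{r+1,[c,\rho]}$ a pipe segment by (2), $D_{r,[c+1,\rho-1]}$ a paired light sequence by (4) (truncating the type-R tail at $\rho=d$ in Case $\DCross$), and $D_{r+1,\rho}\ne\htile$ (it is $\jtile$ or $\ptile$) and $D_{r+1,c}\ne\ptile$ (it is $\htile$ or $\rtile$ by (1)). Swapping the roles of the two rows in your final step --- and noting that no nine-case traversal is needed, only the paired/type-R dichotomy of (4) --- recovers the paper's argument.
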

\begin{proof} By (f3) $D_{r+1,[c,c'-1]}$ is a light sequence.
In particular $D_{r+1,c}$ is light.
Since $D_{r,c}$ is heavy, $D_{r+1,c}$ does not connect upwards.
Item \eqref{I:bottom left} follows.

By Item \eqref{I:bottom left} $D_{r+1,c}$ connects to the right. Following this pipe to the right in row $r+1$, suppose it turns before column $c'$. It turns at $\jtile$ or $\mtile$. This is a contradiction for the $f$-target case by (f2) and (f3), and for the $f^*$-target case by (f*2).
Item \eqref{I:pipe} follows. Moreover $D_{r+1,c'}$ must connect to the left.
In the case that $(r,c)$ is an $f^*$-target of $D$, $D_{r+1,c'}$ must also connect upwards since $D_{r,c'}=\rtile$. Therefore $D_{r+1,c}$ is $\rtile$ or $\htile$. 
The pipe leaving $(r+1,c)$ to the right, cannot turn upwards.
Item \eqref{I:bottom right} follows.

For \eqref{I:light}, by (f1) $S$ is a light sequence. 
It is preceded by a heavy tile $D_{r,c}$, so $S$ must be paired or of type R.
$S$ is followed by the tile $D_{r,c'}$, which is light by (f1)
and which connects downwards, since $D_{r+1,c'}$ is either $\jtile$ or $\ptile$. So $D_{r,c'}$ must be $\vtile$, $\rtile$, or $\ptile$.

If $S$ is of paired type then $D_{r,c'}$ does not connect to the left,
and is therefore $\vtile$ or $\rtile$.  If $S$ is of type $R$ then
$D_{r,c'}$ connects to the left and is therefore $\ptile$.

Suppose $S$ is of paired type. By Item \eqref{I:bottom left} $D_{r+1,c}$ does not connect upwards and in our current case $D_{r,c'}$ does not connect to the left. It follows that 
$D$ admits the $(r,[c,c'])$-undroop, proving \eqref{I:paired}.

Otherwise $S$ is of type R and $D_{r,c'}=\ptile$. 
Let $S$ have last $\rtile$ at position $(r,d)$ where $c<d<c'$.
Suppose also that we are in Case $\Terminal$ so that $D_{r+1,c'}=\jtile$ by
Item \eqref{I:bottom right}.
Now $D_{r+1,d}$ connects upwards since $D_{r,d}=\rtile$ connects downwards
and $D_{r+1,d}$ connects to the right by Item \eqref{I:pipe}.
Therefore $D_{r+1,d}=\ptile$. By the definition of $d$
$D_{r,[d,c']}$ is a pipe segment. By Item \eqref{I:pipe}
$D_{r+1,[d,c']}$ is a pipe segment. We have verified that $D_{[r,r+1]\times [d,c']}$ is a doublecross. 

We have $D_{r,d}= \rtile$ and $D_{r+1,d}=\ptile$.
$D_{r+1,c}$ does not connect upwards since $D_{r,c}$ is heavy.
$D_{r+1,[c,d]}$ is a pipe segment by Item \eqref{I:pipe}
and $D_{r,[c,d]}$ is a paired light sequence since the ending
$\rtile$ of the type $R$ light sequence $S$ has been removed.
Hence the $(r,[c,d])$-undroop of $D$ is well-defined and \eqref{I:typeR} holds.
\end{proof}

\subsection{$F$-moves}
\label{SS:do F move}
Let $(D,(r,c))$ be an $F$-target. 
We define $F_r(D)\in \MBPD(n)$ to be the result
of the following steps:
\begin{itemize}
\item If $D_{r,c}=\mtile$ remove the marking.
\item In Cases $\Blank$ and $\Cross$ perform the $(r,[c,\rho])$-undroop; see \eqref{E:F rho}.
\item If the tile at $(r+1,c')$ is $\jtile$, mark it.
\end{itemize}

The following proposition summarizes how
 $F$-moves affect the permutation associated to the MBPD.
\begin{prop}
\label{P: F permutation}
   
    Let $(D,(r,c))$ be an $F$-target. Then in all the non-terminal cases, $w(f_r(D))=w(D)$. In the terminal cases,
    $w(f^*_r(D))*s_r=w(D)$.

\end{prop}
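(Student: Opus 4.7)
The plan is a case analysis on the nine combinations of the two trichotomies, reduced by two basic observations. \emph{Observation 1}: marking or unmarking a $\jtile$ is invisible to the associated permutation, since $\mtile$ and $\jtile$ have the same (up-left) connectivity. \emph{Observation 2}: each droop/undroop that appears either shifts a pipe between rows $r$ and $r+1$ without creating or destroying any crossing (standard or split type), or additionally removes a crossing at the lower right corner upon undrooping (fuse or split-fuse type, when $D_{r+1,d}=\ptile$ is present before the undroop).

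For the nonterminal cases, in Case $\Ordinary$ the $(r,[c,c'])$-undroop performed in Cases $\Blank$ and $\Cross$ is of standard or split type since $D_{r+1,c'}=\jtile$, so by Observation 2 it only shifts a pipe and $w$ is preserved. In Case $\DCross$ the $(r,[c,d])$-undroop is of fuse or split-fuse type since $D_{r+1,d}=\ptile$ (part of the doublecross), but the crossing at $(r+1,d)$ is the \emph{second} crossing between the two pipes of the doublecross (they also cross at $(r,c')$), so its removal is absorbed by the ``subsequent crossings become bumps'' convention and $w$ is preserved. In Case $\NCross$ no undroop occurs, so the only changes are markings, which are harmless by Observation 1. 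In every nonterminal subcase $w(f_r(D))=w(D)$.

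For the terminal cases I would argue in two parts. In $\Blank \cap \Terminal$ and $\Cross \cap \Terminal$, Lemma~\ref{L:TDO} gives $D_{r+1,c'}=\ptile$, so the $(r,[c,c'])$-undroop is of fuse or split-fuse type. Using the terminal constraints (no $\jtile$ or $\mtile$ right of column $c$ in row $r+1$; no $\rtile$ right of $c$ in row $r+1$ by Lemma~\ref{L:TDO}; no $\rtile$ right of $c'$ in row $r$ by the definition of $c'$), I would show that the pipe entering row $r+1$ from the right stays in row $r+1$ all the way to $(r+1,c')=\ptile$, and the pipe entering row $r$ from the right stays in row $r$ to $(r,c')=\rtile$ where it turns down. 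These pipes meet and cross at $(r+1,c')$, and no earlier crossing between them is possible strictly right of $c'$, so this is their first crossing. Removing it swaps the bottom destinations of the two pipes, giving $w(f^*_r(D))=w(D)s_r$ with $w(f^*_r(D))(r)<w(f^*_r(D))(r+1)$, hence $w(f^*_r(D))*s_r=w(D)$.

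In $\NCross \cap \Terminal$ only the marking at $(r,c)$ is removed, so $w(f^*_r(D))=w(D)$ by Observation 1, and the claim reduces to $w(D)(r)>w(D)(r+1)$. The $\NCross$ condition provides a tile in $\{\vtile,\rtile,\jtile\}$ inside $D_{r+1,(b,c)}$; together with $D_{r,c}=\mtile$ and the forced leftward-then-downward path in row $r$ from $(r,c)=\mtile$ through $\htile$ and $\ptile$ tiles to $(r,b)=\rtile$, this structure forces an effective first crossing between the pipes entering rows $r$ and $r+1$ from the right to occur strictly before the terminal region, so their bottom destinations have already been swapped and $w(D)(r)>w(D)(r+1)$. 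The main obstacle will be executing this last pipe-tracing cleanly, with a subcase split on the type of the obstruction tile in row $r+1$ and a careful accounting of which crossings are effective in each configuration.
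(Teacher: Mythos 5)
Your overall strategy (a crossing-count bookkeeping over the nine cases, using the ``only the first crossing counts'' convention) is the right one and matches the paper's, but your Observation~2 is false as stated, and this breaks the entire $\Cross$ column. The undroop performed by the $F$-move has upper-left corner $(r,c)$, and in Case $\Cross$ that corner is the unmarked $\mtile$, i.e.\ a $\jtile$, which the undroop converts to a $\ptile$: a crossing is \emph{created} there (this is the ``split''/``split-fuse'' row of the droop table read backwards). Your Observation~2 only tracks crossing changes at the lower-right corner, so your arguments for $\Ordinary\Cross$, $\DCross\Cross$ and $\Terminal\Cross$ never account for this new crossing, and in $\Ordinary\Cross$ in particular you conclude ``no crossing is created or destroyed,'' which is simply not true. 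The missing idea is the one the paper uses: the defining property of Case $\Cross$ guarantees that the two pipes meeting at the new $\ptile$ at $(r,c)$ already cross at $(r+1,b)$, a tile outside the undroop rectangle that the move does not touch; hence the new crossing is a second crossing of an already-crossed pair (it completes a doublecross) and is invisible to the associated permutation.

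There is a second gap in the terminal case. Your argument for $\NCross\cap\Terminal$ hunts for the descent-witnessing crossing to the \emph{left} of column $c$, via the pipe through the $\mtile$ at $(r,c)$ --- but that pipe enters $(r,c)$ from above and is in general \emph{not} the pipe entering row $r$ from the right border, which is the pipe relevant to $w(D)(r)$. The correct and uniform argument (valid for all three terminal subcases, so no subcase split on the left trichotomy is needed) is that (f*2) together with Lemma~\ref{L:TDO} forces the pipes entering rows $r$ and $r+1$ from the right border to run undisturbed to $(r,c')$ and through row $r+1$ respectively, meeting for the first time at $(r+1,c')=\ptile$; hence $s_r$ is a right descent of $w(D)$ in every terminal case. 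Finally, in $\Terminal\Blank$ and $\Terminal\Cross$ you assert $w(\fs_r(D))=w(D)s_r$ unconditionally, but the two pipes may cross again outside the window, in which case $w(\fs_r(D))=w(D)$ (the paper explicitly warns that $w(\fs_r(D))$ can be either value). The desired identity $w(\fs_r(D))*s_r=w(D)$ holds in both situations once $s_r$ is known to be a descent of $w(D)$, but your proof as written only treats one of them.
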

\begin{proof}
    In Case $\Terminal$, by the definition of $\fs_r$-target, the pipes $p$ and $q$ that enter from row $r$ and $r+1$ necessarily cross within the window of the target. Therefore, $s_r$ is a right descent of $w(D)$.  In Case $\NCross$, only the marking 
    changes, so the permutation does not
    change. When not in Cases $\NCross$ or
    $\Terminal$, the $(r,[c,\rho])$-undroop first possibly remove a crossing from a doublecross, then possibly create a doublecross, so the permutation is unchanged. Finally in Cases $\Terminal\Blank$ and $\Terminal\Cross$, the unique crossing between pipes $p$ and $q$ within the window is uncrossed, and a new double cross which does not affect the permutation is created in Case $\Terminal\Cross$. 
    If the pipes that enter from row $r$ and $r+1$ in $\fs_r(D)$ no longer cross then
    $w(\fs_r(D))=w(D)s_r$; otherwise $w(\fs_r(D))=w(D)$.
    The claim then follows.
 \end{proof}

The $F_r$ move 
removes a heavy tile in row $r$ 
and creates a heavy tile in row $r+1$
unless it is in Case $\Terminal$.
This behavior is captured by the following lemma.
\begin{lem}
\label{L: Only heavy after F}
Assume $(D,(r,c))$ is an $F$-target
with window $[r, r+1] \times [b, c']$.
Then all tiles in $F_r(D)_{[r, r+1], [b, c']}$
are light except possibly $F_r(D)_{r+1, c'}$.
It is light if and only if $(D,(r,c))$
is in Case $\Terminal$.
\end{lem}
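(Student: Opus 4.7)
The plan is a case analysis on the two trichotomies in \S\ref{sec:f-moves}, handling each piece of the window separately. I partition the window columns into three ranges and track how each step of the $F$-move affects them: the left part $[b,c-1]$, nonempty only in Cases $\Cross$ and $\NCross$, is untouched by the $F$-move; the middle part $[c,\rho]$ is either acted on by the $(r,[c,\rho])$-undroop (Cases $\Blank$ and $\Cross$) or only sees the marking removal (Case $\NCross$); and the right part $[\rho+1,c']$ is nonempty only in Case $\DCross$ and is likewise untouched. The final marking step at $(r+1,c')$ is handled at the end.

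For the left part, row $r+1$ is light by condition (f3). For row $r$, the choice of $b$ in \eqref{E:f b} forbids any $\rtile$ in $D_{r,[b+1,c-1]}$, while the left connection of $D_{r,c}=\mtile$ forces $D_{r,c-1}$ to have a right connection. Since $\rtile$ is the only tile possessing a right connection but no left connection, any tile in $D_{r,[b+1,c-1]}$ with a right connection must also have a left connection; propagating leftward gives $D_{r,[b+1,c-1]}\subseteq\{\htile,\ptile\}$, all light.

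For the middle part in Cases $\Blank$ and $\Cross$, the conditions gathered in Lemma~\ref{L:TDO} imply that (after removing any marking) the $(r,[c,\rho])$-undroop is well-defined. Its corner transformations send $D_{r,c}\in\{\btile,\jtile\}$ to $\{\rtile,\ptile\}$, $D_{r+1,c}\in\{\htile,\rtile\}$ to $\{\jtile,\vtile\}$, and $D_{r,\rho}\in\{\vtile,\rtile\}$ to $\{\jtile,\htile\}$, all light, while the interiors become a pipe segment in row $r$ (hence tiles $\htile,\ptile$) and a paired light sequence in row $r+1$, again all light. The lower-right undroop corner $(r+1,\rho)$ transforms as $\jtile\to\btile$ (heavy) or $\ptile\to\rtile$ (light). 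In Case $\NCross$ no undroop occurs and only $(r,c)=\mtile$ becomes $\jtile$, light.

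For the right part in Case $\DCross$, Lemma~\ref{L:TDO}(\ref{I:typeR}) identifies the light sequence $S=D_{r,[c+1,c'-1]}$ as type $R$ with its final $\rtile$ at $(r,d)$, so $D_{r,[d+1,c'-1]}$ contains no $\rtile$ or $\jtile$ and lies in $\{\htile,\vtile,\ptile\}$; $D_{r+1,[d+1,c'-1]}$ lies in the interior of a pipe segment and hence in $\{\htile,\ptile\}$; $D_{r,c'}=\ptile$; all of these are light. Finally the marking step converts any $\jtile$ at $(r+1,c')$ into $\mtile$. Assembling the cases, $(r+1,c')$ becomes $\rtile$ (Cases $\Terminal\Blank, \Terminal\Cross$, via the undroop sending $\ptile\to\rtile$), $\ptile$ (Case $\Terminal\NCross$, unchanged), $\btile$ (Cases $\Ordinary\Blank, \Ordinary\Cross$, via the undroop sending $\jtile\to\btile$), or $\mtile$ (Cases $\Ordinary\NCross$ and any $\DCross$ subcase, via the marking step applied to the unchanged $\jtile$), yielding precisely the claimed equivalence. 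I expect the main bookkeeping burden to be Case $\DCross$, where the undroop window differs from the $F$-move window and one must separately verify the untouched tiles in columns $[d+1,c']$ remain light.
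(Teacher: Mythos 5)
Your proof is correct and follows essentially the same route as the paper's: both reduce the claim to the observation that the $(r,[c,\rho])$-undroop preserves lightness everywhere except at its lower-right corner (where $\jtile\to\btile$), and then track the marking steps at $(r,c)$ and $(r+1,c')$ across the cases. Your column-range decomposition and the explicit pipe-segment argument showing $D_{r,[b,c-1]}$ is light merely spell out details the paper's proof leaves implicit.
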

\begin{proof}
By (f1) and (f3),
the only heavy tile in $D_{[r, r+1], [b, c']}$ is $D_{r, c}$.
We first prove the lemma in Case $\NCross$.
Since $D_{r, c} = \mtile$,
the $F_r$ move unmarks it.
Then $F_r$ marks $D_{r+1, c'}$ if and only
if $D_{r+1, c'} = \jtile$,
which is equivalent to 
we are not in case $\Terminal$.

Next we prove the lemma
outside of Case $\NCross$.
After the $(r,[c,\rho])$-undroop,
the only tile that might be heavy 
in $[r, r+1] \times [b, c']$
is $(r+1, \rho)$.
In Case $\Terminal$,
$D_{r+1, \rho} = D_{r+1, c'} = \ptile$,
so $F_r(D)_{r+1, \rho} = \rtile$ is not heavy. 
In Case $\DCross$,
$D_{r+1, \rho} = \ptile$,
so $F_r(D)_{r+1, \rho} = \rtile$ is not heavy. 
Then $D_{r, c'} = \jtile$ is marked
and become the only heavy tile
in $[r, r+1] \times [b, c']$.
In Case $\Ordinary$,
$D_{r+1, \rho} = D_{r+1, c'} = \jtile$,
so $F_r(D)_{r+1, \rho} = \btile$ is heavy.
\end{proof}

We now give illustrations for the $F$-moves in all nine cases. The labels for the results of these moves will be explained later in Section~\ref{sec:e-moves} when we define $E$-moves.
The window of the move is indicated by a thick two-row rectangle. 
The box $(r,\lambda)$ is colored green and the box $(r+1,\rho)$ is colored red. When (un)droops occur, these boxes will be the corners of the (un)droop window.
\[

\]

\section{$\etarg$-moves}
\label{sec:e-moves}
\subsection{$\etarg$-targets}
We say the tile $(r+1, c)$ is an \textit{$e$-target} of $D\in \MBPD(n)$ if the following are satisfied. 
\begin{itemize}
\item[(e1)] It is the leftmost heavy tile on row $r+1$.
\item[(e2)] There is an index $c' < c$ such that $D_{r,c'}=\rtile$
and $D_{r+1,[c',c]}$ is not a pipe segment.
Let $c'$ be maximum with this property.
\item[(e3)] On the right of $(r, c')$ in row $r$ there are no heavy tiles. 
\end{itemize}
We say the tile $(r+1,c)$ is an \textit{$e^*$-target} of $D\in \MBPD(n)$ if the 
conditions (e*1), (e2), and (e3) are satisfied, where
the condition (e1) for an $e$-target has been replaced by the condition
\begin{itemize}
\item[(e*1)] There are no heavy tiles in row $r+1$,
and $c$ is the largest such that $D_{r,c}$ or $D_{r+1, c}$ is $\rtile$.
\end{itemize}

Again we abuse language by saying that an 
\emph{$e$-target} (resp. $e^*$-target) is a pair $(D,(r+1,c))$ where $D\in\MBPD(n)$
and $(r+1,c)$ is an $e$-target (resp. $e^*$-target) of $D$. Similar to the $F$-case, an $E$-target is either an $e$-target or $e^*$-target.

We define the \emph{\window } of an $E$-target $(D,(r+1,c))$ to be the two-row rectangle
$[r, r+1] \times [c', c]$
with $c'$ as in (e2). 

If $(D, (r+1,c))$ is an $e$-target, we shall define an $e$-move $D\mapsto e_r(D)$ under which a heavy tile is removed from the $(r+1)$-th row and a heavy tile is put into the $r$-th row. In this case $w(D)=w(e_r(D))$. 
If $(D, (r+1,c))$ is an $e^*$-target we shall define an $e^*$-move $D\mapsto e^*_r(D)$ in which a heavy tile is inserted into the $r$-th row. In this case $w(e^*_r(D))=w(D)*s_r$. 
The moves will only affect tiles in the windows.

After some preliminary work these moves are defined in \S \ref{SS:the E moves}.

\subsection{Two trichotomies of cases for $\etarg$-moves}
Let $(D,(r+1,c))$ be an $E$-target.
We have 9 cases with three choices for the first symbol and three for 
the second symbol.

Here is the right  trichotomy for $E$-moves.
Exactly one of them holds for an $E$-target.
\begin{itemize}
\item $\Initial$ (Initial): $(r+1, c)$ is an $e^*$-target.
\item $\Plus$ (Plus): $(r+1, c)$ is an $e$-target and $D_{r,c}=\ptile$. 

\item $\NPlus$ (No plus): $(r+1, c)$ is an $e$-target and $D_{r, c}\ne\ptile$. 
\end{itemize}

Define the \emph{right droop column} $\rho$ of the $E$-target $(D,(r+1,c))$ by
\begin{align}\label{E:right droop column}
  \rho = \begin{cases} 
  c & \text{in Cases $\Initial$ and $\NPlus$} \\
  \max\bigg\{d' \mid \text{$d'<c$ and $D_{r+1,d'}=\rtile$}\bigg\} & \text{in Case $\Plus$.} 
  \end{cases}
\end{align}

The left  trichotomy for $\etarg$-moves
are the following three cases, exactly one of which holds for any $E$-target $(D,(r+1,c))$
with window $[r,r+1]\times [c',c]$.

\begin{itemize}
\item $\Leftturn$ (Left turn): $D_{r,[c',c]}$ is not a pipe segment. That is, the pipe leaving the $\rtile$ at $(r,c')$ to the right, must turn upwards (make a left turn) before arriving at $(r,c)$.
\item $\DCross$ (Doublecross): $D_{r,[c',c]}$ is a pipe segment and 
the tile $(r,c')$ is the top-left corner of a doublecross.
\item $\Straight$ (Straight): $D_{r,[c',c]}$ is a pipe segment
and the tile $(r,c')$ is not the top-left corner of a doublecross.
\end{itemize}

Define the \emph{left droop column} $\lambda$ of the $E$-target $(D,(r+1,c))$ by
\begin{align}\label{E:left droop column}
  \lambda = \begin{cases} 
  c' & \text{in Case $\Straight$} \\
  \min \bigg\{\text{$d\mid d>c'$ such that $D_{r+1,d}=\jtile$}\bigg\} & \text{in Case $\DCross$}\\
  \min \bigg\{\text{$d\mid d>c'$ such that $D_{r,d}=\jtile$}\bigg\} & \text{in Case $\Leftturn$.}
  \end{cases}
\end{align}

We start by characterizing what the tile
$(r+1, c)$ can be if it is an $E$-target.
\begin{lem}
\label{L: What can E target be}
Let $D$ have $\etarg$-target $(r+1,c)$.
In Case $\Leftturn$,
$D_{r+1, c}$ is $\ptile$ or $\mtile$.
Otherwise, 
$D_{r+1, c}$ is $\rtile$, $\btile$ or $\mtile$. 
\end{lem}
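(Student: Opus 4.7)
The plan is a careful case analysis on the two trichotomies, using conditions (e1), (e*1), (e2), (e3) together with pipe-connectivity in MBPDs. Conditions (e1) and (e*1) immediately determine the weight of $D_{r+1,c}$: it is heavy (so in $\{\btile,\mtile\}\subseteq\{\rtile,\btile,\mtile\}$) in Cases $\Plus$ and $\NPlus$, and light in Case $\Initial$. For Cases $\Initial\Straight$ and $\Initial\DCross$, the pipe-segment hypothesis on $D_{r,[c',c]}$ propagates the pipe from $D_{r,c'}=\rtile$ rightward through $\htile$s and $\ptile$s, forcing $D_{r,c}$ to connect left; since $D_{r,c}$ is light by (e3), $D_{r,c}\in\{\htile,\ptile,\jtile\}$, so $D_{r,c}\ne\rtile$ and (e*1) then forces $D_{r+1,c}=\rtile$.

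For Case $\Leftturn$, the first $\jtile$ at $(r,\lambda)$ with $c'<\lambda<c$ plays a central role, since $\jtile$ has no right edge: the pipe entering row $r$ from the right boundary cannot pass through $(r,\lambda)$ and must turn down at an $\rtile$ strictly to the right of $\lambda$. Let $c_{\max}\in[c',c]$ be the largest column with $D_{r,c_{\max}}=\rtile$. When $c_{\max}>c'$, maximality of $c'$ in (e2) forces $D_{r+1,[c_{\max},c]}$ to be a pipe segment, and combined with $D_{r,c_{\max}}=\rtile$ we get $D_{r+1,c_{\max}}=\ptile$ and a horizontal pipe propagates making $D_{r+1,c}$ connect left; when $c_{\max}=c$, the equality $D_{r,c}=\rtile$ makes $D_{r+1,c}$ connect up. In Case $\Initial\Leftturn$, the hypothesis $c_{\max}<c$ forces (via (e*1)) $D_{r+1,c}=\rtile$, which has no left edge, contradicting the pipe-segment argument; hence $c_{\max}=c$, $D_{r,c}=\rtile$, and $D_{r+1,c}\in\{\jtile,\ptile,\vtile\}$. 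In the heavy Cases $\Plus\Leftturn$ and $\NPlus\Leftturn$, $\btile$ is excluded whenever $c_{\max}>c'$ since $\btile$ has no edges (and in $\Plus\Leftturn$ the hypothesis $D_{r,c}=\ptile$ directly forces $D_{r+1,c}$ to connect up), giving $D_{r+1,c}=\mtile$.

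The main obstacle is ruling out $\jtile$ and $\vtile$ in Case $\Initial\Leftturn$ and $\btile$ in Case $\NPlus\Leftturn$ with $c_{\max}=c'$. My approach for both uses a ``no-right-edge propagation'' argument. For Case $\Initial\Leftturn$: if $D_{r+1,c}\in\{\jtile,\vtile\}$, then $D_{r+1,c+1}$ would have no left edge, and by (e*1) (no heavy tiles in row $r+1$ and no $\rtile$ beyond column $c$) this forces $D_{r+1,c+1}=\vtile$; the same argument propagates rightward, forcing $D_{r+1,c+1},\dotsc,D_{r+1,n}=\vtile$, contradicting the pipe entering row $r+1$ from the right boundary at $(r+1,n)$. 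For Case $\NPlus\Leftturn$ with $c_{\max}=c'$: assuming $D_{r+1,c}=\btile$, the same propagation starting at $(r,\lambda)=\jtile$ but now in row $r$ forces $D_{r,\lambda+1},\dotsc,D_{r,c-1}=\vtile$ (using (e3) to exclude heavies and $c_{\max}=c'$ to exclude $\rtile$) and finally $D_{r,c}=\vtile$, contradicting $D_{r,c}\in\{\htile,\jtile\}$ deduced from $D_{r+1,c}=\btile$ and the $\NPlus$ hypothesis. Identifying and correctly applying these two parallel propagation arguments is the crux of the proof.
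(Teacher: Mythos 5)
Your argument is correct and rests on the same ingredients as the paper's, but the decomposition is different. The paper's proof is a two-step argument organized by the value of the tile rather than by the $3\times3$ case grid: it first pins $D_{r+1,c}$ down to $\{\rtile,\btile,\ptile,\mtile\}$ in one stroke (heaviness in Cases $\Plus$, $\NPlus$; in Case $\Initial$, condition (e*1) plus the fact that the pipe entering row $r+1$ from the right passes only through $\htile$'s and $\ptile$'s until the rightmost $\rtile$ of that row), and then shows that $\rtile$ or $\btile$ forces ``not $\Leftturn$'' (the largest $d<c$ with $D_{r,d}=\rtile$ must equal $c'$ by maximality in (e2), so $D_{r,[c',c]}$ is a pipe segment) while $\ptile$ forces $\Leftturn$. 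Your sweep of all nine cases is longer but determines the exact tile in each subcase, and your two propagation arguments are the ``walk to the boundary'' versions of the paper's rightmost-$\rtile$ observations. One step is compressed enough to read as a gap: in Case $\Initial\Leftturn$ you refute $c_{\max}<c$ by citing ``the pipe-segment argument,'' but as stated that argument only applies when $c_{\max}>c'$, so the subcase $c_{\max}=c'$ is not literally addressed. It cannot occur --- your own opening observation that the pipe entering row $r$ from the right must turn down at an $\rtile$ strictly to the right of $\lambda$, combined with (e*1) bounding that $\rtile$ by column $c$, forces $c_{\max}>\lambda>c'$ in Case $\Initial$ --- but you should say this explicitly, or reuse the row-$r$ propagation you give for $\NPlus\Leftturn$, which shows that $c_{\max}=c'$ would make $D_{r,[c',c]}$ a pipe segment and contradict $\Leftturn$.
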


\begin{proof}
We first show that $D_{r+1,c}$ is one of $\rtile$, $\btile$, $\ptile$, or $\mtile$.
Then we show that if $D_{r+1,c}$ is $\rtile$ or $\btile$ then $(D,(r+1,c))$ is not in case $\Leftturn$,
and if $D_{r+1,c}=\ptile$ then $(D,(r+1,c))$ is in case $\Leftturn$.

In Case $\Initial$,
we know $D_{r, c}$ or $D_{r+1, c}$
is the only $\rtile$ in $D_{[r, r+1], [c, n]}$.
In the former situation, 
we know the rightmost $\rtile$ in row $r+1$
is on the left of $D_{r+1, c}$,
so $D_{r+1, c} = \ptile$.
In the latter situation, 
$D_{r+1, c} = \rtile$.
If we are not in Case $\Initial$,
$D_{r+1, c}$ is heavy, 
so it is $\btile$ or $\mtile$.

Suppose 
$D_{r+1, c}$ is $\rtile$ or $\btile$,
so $D_{r, c}$ is not connected to the bottom.
By (e3), $D_{r, c}$ is light, 
so it connects to the left. 
Let $d < c$ be the largest such that 
$D_{r, d} = \rtile$.
Then $D_{r+1, [d, c]}$ is not a pipe 
segment. 
We must have $d = c'$ and $D_{r,[c', c]}$
is a pipe segment,
so we cannot be in Case $\Leftturn$.

Suppose $D_{r+1, c} = \ptile$.
We are in Case $\Initial$ since it is light.
By (e*1), $D_{r, c} = \rtile$.
Then $D_{r, [c',c]}$ cannot be a pipe segment,
so we are in Case $\Leftturn$.
\end{proof}

\begin{lem} \label{L:e SDL}
Let $D$ have $\etarg$-target $(r+1,c)$.
\begin{enumerate}
\item \label{I: e L} 
If Case $\Leftturn$ holds then $S=D_{r,[c'+1,c-1]}$ has type J or JR.
\item \label{I: e top pipe} Suppose Case $\Leftturn$ does not hold.
Let $S'=D_{r+1,[c'+1,c-1]}$.
Case $\DCross$ holds if and only if 
$S'$ is of type J or JR.
In this case, 
$D_{[r,r+1],[c',\lambda]}$ is a doublecross.
\end{enumerate}
\end{lem}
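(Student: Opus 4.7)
My plan is to deduce both parts from Remark \ref{R:light sequence} by identifying the tiles that immediately flank the light sequences $S$ and $S'$. In both cases $D_{r,c'}=\rtile$ is the anchor: it determines the left neighbor of $S$ directly, and via MBPD pipe-connectivity it constrains $D_{r+1,c'}$, the left neighbor of $S'$.

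For part (1), I first argue that $c\ge c'+2$: if $c=c'+1$ then the interior of $D_{r,[c',c]}$ is empty and the pipe-segment condition is vacuously true, contradicting Case $\Leftturn$. Hence $S=D_{r,[c'+1,c-1]}$ is nonempty, and by (e3) every tile in row $r$ strictly right of column $c'$ is light, so $S$ is a light sequence. Its immediate left neighbor $D_{r,c'}=\rtile$ is right-connected, and Remark \ref{R:light sequence}\eqref{I:light after right} then forces $S$ to be of type J or JR.

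For part (2), I would first prove the characterization that $S'$ is of type J or JR if and only if $D_{r+1,c'}=\ptile$. Conditions (e1) and (e*1) make every tile of row $r+1$ strictly left of column $c$ light, so $S'$ is a light sequence and $D_{r+1,c'}$ is light. Since $D_{r,c'}=\rtile$ connects downward, $D_{r+1,c'}$ must connect upward, giving $D_{r+1,c'}\in\{\vtile,\ptile,\jtile\}$. In the subcases $D_{r+1,c'}\in\{\vtile,\jtile\}$ the left neighbor of $S'$ is not right-connected, so Remark \ref{R:light sequence}\eqref{I:light after no right} yields paired or type R; the remaining subcase $D_{r+1,c'}=\ptile$ yields type J or JR by \eqref{I:light after right}. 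It remains to identify the condition $D_{r+1,c'}=\ptile$ with Case $\DCross$. One direction is immediate: the paper observes that any doublecross $D_{[r,r+1],[b,d]}$ has $D_{r+1,b}=\ptile$, so Case $\DCross$ forces $D_{r+1,c'}=\ptile$. Conversely, suppose $D_{r+1,c'}=\ptile$, so $S'$ is of type J or JR and in particular contains a $\jtile$; let $d$ be the column of its leftmost $\jtile$, so $c'<d\le c-1$. Chasing the pipe eastward from $(r+1,c')$, MBPD connectivity forces each successive tile to be left-connected, which inside a light sequence leaves only $\htile$, $\ptile$, and $\jtile$. Because $d$ is the first $\jtile$, all entries $D_{r+1,j}$ with $c'<j<d$ are $\htile$ or $\ptile$, making $D_{r+1,[c',d]}$ a pipe segment. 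The non-Leftturn hypothesis gives $D_{r,[c',c]}$ as a pipe segment, and restricting to the sub-interval $[c',d]\subset[c',c]$ shows $D_{r,[c',d]}$ is a pipe segment too. Together with $D_{r,c'}=\rtile$ and $D_{r+1,d}=\jtile$, this exhibits the required doublecross, and Lemma \ref{L: Find DC} identifies this $d$ with $\lambda$ as defined in \eqref{E:left droop column}.

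The main technical obstacle is the eastward pipe-chase in row $r+1$: the type J/JR structure of the RJ subsequence only rules out $\rtile$ and $\jtile$ in the interval $(c',d)$, so one must invoke MBPD pipe-connectivity to also exclude $\btile$ and $\vtile$ there and pin down the interior entries as $\htile$ or $\ptile$. Everything else is a clean case analysis driven by Remark \ref{R:light sequence}.
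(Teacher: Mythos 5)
Your proof is correct and follows essentially the same route as the paper: part (1) is the same one-line appeal to Remark \ref{R:light sequence}\eqref{I:light after right} via $D_{r,c'}=\rtile$, and part (2) builds the same doublecross $D_{[r,r+1],[c',d]}$ from the leftmost $\jtile$ of $S'$ and identifies $d$ with $\lambda$. The only differences are cosmetic or supplementary: you pivot through the equivalent condition $D_{r+1,c'}=\ptile$, and you spell out the pipe-chase showing $D_{r+1,[c',d]}$ is a pipe segment (ruling out $\btile$ and $\vtile$), a step the paper asserts without detail.
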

\begin{proof} Suppose Case $\Leftturn$ holds. 
Since $D_{r,c'}=\rtile$ we deduce Item \eqref{I: e L} by 
Remark \ref{R:light sequence} \eqref{I:light after right}.

Suppose Case $\Leftturn$ does not hold,
so $D_{r,[c',c]}$ is a pipe segment.
If $S'$ has type J or JR,
we can find smallest $d$ such that 
$c' < d < c$ and $D_{r+1, d} = \jtile$.
Then $D_{r+1, [c', d]}$ is a pipe segment.
Since $D_{r,[c', c]}$ is a pipe segment,
so is $D_{r, [c', d]}$.
We have $D_{[r, r+1], [c, d]}$ is a double cross,
so we are in Case $\DCross$ and $\lambda = d$.
Conversely, if the doublecross exists,
then $D_{r+1, [c, \lambda]}$
is a pipe segment with $D_{r+1, \lambda} = \jtile$.
Thus, $S'$ has type J or JR.
\end{proof}

\begin{lem}
If not in Case $\Leftturn$,
$D$ admits the $(r, [\lambda, \rho])$-droop.
\end{lem}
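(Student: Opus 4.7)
The plan is to verify the four defining conditions of the $(r,[\lambda,\rho])$-droop in each of the six combined sub-cases obtained from the left trichotomy (Cases $\Straight$ and $\DCross$, since $\Leftturn$ is excluded by hypothesis) and the right trichotomy (Cases $\Initial$, $\Plus$, $\NPlus$).

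First I would pin down the corner tiles of the droop rectangle. On the left: Case $\Straight$ gives $\lambda=c'$ with $D_{r,c'}=\rtile$, and Lemma~\ref{L:e SDL} together with Remark~\ref{R:light sequence} rules out $D_{r+1,c'}=\ptile$ (otherwise the light sequence $D_{r+1,[c'+1,c-1]}$ would follow a right-connecting tile and hence be of type J or JR, contradicting $\Straight$), so $D_{r+1,c'}\in\{\vtile,\jtile\}$. Case $\DCross$ gives $\lambda>c'$ with $D_{[r,r+1],[c',\lambda]}$ a doublecross by Lemma~\ref{L:e SDL}, hence $D_{r,\lambda}=\ptile$ and $D_{r+1,\lambda}=\jtile$. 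On the right: in Case $\Initial$ the $e^*$-target condition forces $D_{r+1,c}=\rtile$; in Case $\Plus$, $D_{r,c}=\ptile$ connects down and $(r+1,c)$ is heavy, so $D_{r+1,c}=\mtile$; in Case $\NPlus$ the alternative $D_{r+1,c}=\mtile$ is ruled out, because it would force $D_{r,c}$ to connect both left (as the right end of the row-$r$ pipe segment) and down (to meet the up-connection of $\mtile$), giving $D_{r,c}=\ptile$ and contradicting $\NPlus$; so $D_{r+1,c}=\btile$.

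Next I would show that $\rho=c-1$ in Case $\Plus$. Since $D_{r+1,c}=\mtile$, the light sequence $D_{r+1,[\rho+1,c-1]}$ immediately precedes $\mtile$, so Remark~\ref{R:light sequence} forces it to be of type R or JR; if non-empty it would contain an $\rtile$, violating the maximality of $\rho$. Hence the sequence is empty and $\rho=c-1$; then $D_{r+1,c-1}=\rtile$, and since $D_{r,c-1}$ lies in the interior of the pipe segment $D_{r,[c',c]}$ and cannot connect down (the $\rtile$ below has no up-connection), $D_{r,c-1}=\htile$.

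With these identifications I verify the four droop conditions. Condition (b), that $D_{r,[\lambda,\rho]}$ is a pipe segment, is immediate from $[\lambda,\rho]\subseteq[c',c]$. Condition (d), $D_{r,\lambda}\ne\htile$ and $D_{r,\rho}\ne\ptile$, follows from the corner identifications: $D_{r,\lambda}\in\{\rtile,\ptile\}$, and $D_{r,\rho}$ is $\htile$, $\jtile$, or explicitly not $\ptile$ by the $\NPlus$ hypothesis. Condition (c), that $D_{r+1,[\lambda+1,\rho-1]}$ is paired light, combines two applications of Remark~\ref{R:light sequence}: the sequence follows $\vtile$ or $\jtile$ (neither connecting right), so it is paired or type R; it precedes $\rtile$ or $\btile$ (neither connecting left), so it is paired or type J; the only common possibility is paired. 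Finally condition (a) follows from (b), (c), and the corner identifications, with the permitted $\btile$ at $(r+1,\rho)$ appearing precisely in Case $\NPlus$. The main obstacle is Case $\Plus$: the identity $\rho=c-1$ is an essential, non-obvious preliminary that must be established via Remark~\ref{R:light sequence} before the droop conditions can be sensibly checked, with the pipe-continuity argument ruling out $D_{r+1,c}=\mtile$ in Case $\NPlus$ a secondary subtlety.
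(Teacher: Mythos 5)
Your overall strategy---pinning down the corner tiles of the droop rectangle in each of the six sub-cases and then checking the four droop conditions---is essentially the paper's approach, and most of your individual verifications are correct: the pipe-segment condition from $[\lambda,\rho]\subseteq[c',c]$, the ``paired'' condition obtained by intersecting Remark~\ref{R:light sequence}(1) and (3) (which is arguably cleaner than the paper's two-step exclusion of types J/JR and R), and the corner identifications in Cases $\Straight$, $\DCross$, $\Initial$ and $\NPlus$. However, the step you single out as essential, namely that $\rho=c-1$ in Case $\Plus$, is false. By definition $\rho$ is the largest $d'<c$ with $D_{r+1,d'}=\rtile$, and the columns strictly between $\rho$ and $c$ may be filled with $\htile$'s and $\ptile$'s carrying the horizontal pipe from that $\rtile$ into the $\mtile$ at $(r+1,c)$. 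The paper's own worked illustration of Case $\Plus\Straight$ (among the $F$-move pictures) has $D_{r+1,c-1}=\ptile$ and $\rho=c-2$. Your derivation misapplies Remark~\ref{R:light sequence}(4): a nonempty light sequence consisting entirely of $\htile$/$\ptile$ tiles has empty RJ subsequence and is therefore classified as paired, so the remark cannot be used to force an $\rtile$ into $D_{r+1,[\rho+1,c-1]}$; the $\rtile$ at which the leftward pipe out of the $\mtile$ eventually turns down may sit at column $\rho$ itself, outside the sequence you are analyzing.

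The error is repairable because the two consequences you draw from $\rho=c-1$ are true for the correct $\rho$ and can be obtained directly: $D_{r+1,\rho}=\rtile$ holds by the very definition of $\rho$ in Case $\Plus$, and since $\rtile$ does not connect upward while $c'<\rho<c$ places $(r,\rho)$ in the interior of the pipe segment $D_{r,[c',c]}$, one gets $D_{r,\rho}=\htile\neq\ptile$---which is exactly how the paper disposes of the second half of the last droop condition. So your final verification of conditions (a)--(d) survives once the claim $\rho=c-1$ is deleted and its uses are replaced by these direct observations; but as written the proof contains a false assertion presented as a load-bearing preliminary, and that needs to be fixed.
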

\begin{proof}
We check the conditions of drooping.
\begin{itemize}
\item All tiles in $D_{[r, r+1], [c', c]}$
are not heavy except possibly $(r+1, c)$.
Since $c' \leq \lambda < \rho \leq c$,
all tiles in $D_{[r, r+1], [\lambda, \rho]}$
are not heavy except possibly $(r+1, \rho)$.
\item Since we are not in Case $\Leftturn$,
$D_{r, [c', c]}$ is a pipe segment and thus
 $D_{r, [\lambda, \rho]}$ is also.
\item Let $S' = D_{r+1, [\lambda+1, \rho - 1]}$.
To show $S'$ is paired, 
we first show $S'$ cannot have type J or JR.
\begin{itemize}
\item In case $\Straight$, we know $\lambda = c'$.
By Lemma~\ref{L:e SDL},
$D_{r+1,[c'+1,c-1]}$ does not have type J or JR.
Thus, $S'$ does not have type J or JR.
\item In case $\DCross$, we know $D_{r+1, \lambda} = \jtile$.
Thus, $S'$ does not have type J or JR.
\end{itemize}
Finally, we just need to check $S'$ cannot have type R,
which is equivalent to showing that $D_{r+1, \rho}$
does not connect to the left.
Suppose it does.
Then $D_{r+1, \rho}$
cannot be $\rtile$,
so we are not in Case $\Plus$
and $\rho = c$.
By Lemma~\ref{L: What can E target be}, since $D_{r+1,c}$ connects to the left, it is $\ptile$ or $\mtile$,
so it connects to the top.
Since we are not in Case $\Plus$,
$D_{r, c}$ must be $\rtile$ or $\vtile$.
Thus, $D_{r, [c', c]}$ cannot be a pipe segment,
which is a contradiction. 
\item We check $D_{r, \lambda} \neq \htile$.
It is $\rtile$ in Case $\Straight$
and $\jtile$ in Case $\Leftturn$.
In case $\DCross$,
we know $D_{r+1, \lambda} = \jtile$,
so $D_{r, \lambda}$ cannot be $\htile$.

Then we check $D_{r, \rho} \neq \ptile$.
In Case $\Plus$, 
$D_{r+1, \rho} = \rtile$ so $D_{r, \rho} \neq \ptile$.
Otherwise, $\rho = c$ and we know 
$D_{r, c} \neq \ptile$. \qedhere
\end{itemize}
\end{proof}

\subsection{The $\etarg$-moves}
\label{SS:the E moves}
We shall now define $e$-moves, which are the inverses of $f$-moves.
Given an $E$-target $(D,(r+1,c))$, define $E_r(D)\in\MBPD(n)$ to be the result of the
following steps:
\begin{itemize}
\item If $D_{r+1,c}=\mtile$, remove the marking.
\item In Cases $\Straight$ and $\DCross$, perform the
$(r,[\lambda,\rho])$-droop where $\lambda$ and $\rho$ are defined respectively by
\eqref{E:left droop column} and \eqref{E:right droop column}.
\item If $(r, \lambda)$ is $\jtile$, mark it.
\end{itemize}

The $E_r$ move 
creates a heavy tile in row $r$ 
and removes a heavy tile in row $r+1$
unless it is in Case $\Initial$.
This behavior is captured by the following lemma.
\begin{lem}
\label{L: Only heavy after E}
Assume $D_{r+1,c}$ is a $E$-target
with window $[r, r+1] \times [c', c]$.
Then all tiles in $E_r(D)_{[r, r+1], [c', c]}$
are light except $E_r(D)_{r, \lambda}$,
where $\lambda$ is the left droop column.
Moreover, $E_r(D)_{r+1, c} = \ptile$
if we are in Case $\Initial$ and $\jtile$ 
otherwise.
\end{lem}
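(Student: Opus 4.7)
The plan is to proceed by case analysis on the $3 \times 3$ trichotomy of $E$-moves, paralleling the structure of Lemma~\ref{L: Only heavy after F}. I will begin with a baseline observation: inside the window $[r,r+1] \times [c',c]$, the only tile of $D$ that can be heavy is $D_{r+1,c}$. This will follow from (e3) (so that $D_{r,[c',c]}$ is entirely light, since $D_{r,c'} = \rtile$ is light and no heavy tiles lie to its right in row $r$) together with (e1) or (e*1) (so that $D_{r+1,[c',c-1]}$ is light); moreover, $D_{r+1,c}$ will be heavy precisely in the non-$\Initial$ cases.

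Next I will pin down $D_{r+1,c}$ in each of the nine cases. Lemma~\ref{L: What can E target be} forces $D_{r+1,c} \in \{\ptile,\mtile\}$ in Case $\Leftturn$ and $D_{r+1,c} \in \{\rtile,\btile,\mtile\}$ otherwise. In Case $\Initial$ Straight/DCross the tile must be light and so equals $\rtile$. In Case $\Plus$ Straight/DCross, $D_{r,c} = \ptile$ connects downward and so forces $D_{r+1,c} = \mtile$. In Case $\NPlus$ Straight/DCross the tile $D_{r,c}$ is light, lies in the pipe segment $D_{r,[c',c]}$, and is not $\ptile$, so it must be $\htile$ or $\jtile$---neither of which connects downward---forcing $D_{r+1,c} = \btile$. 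This MBPD-matching argument for Case $\NPlus$ is the main obstacle, since without it the droop precondition would fail when $\rho = c$ and $D_{r+1,c} = \mtile$.

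Finally I will track the steps of the $E$-move in each case. In Case $\Leftturn$ no droop occurs: the unmark leaves $\ptile$ unchanged or turns $\mtile$ into $\jtile$, and the mark step turns $D_{r,\lambda} = \jtile$ into $\mtile$, giving both conclusions. In Cases Straight and DCross, the droop $(r,[\lambda,\rho])$ of the preceding admissibility lemma applies. Its upper-left corner is $D_{r,\lambda} = \rtile$ in Straight (where $\lambda = c'$) or $D_{r,\lambda} = \ptile$ in DCross (since $D_{r,\lambda}$ is the top-right corner of the doublecross $D_{[r,r+1],[c',\lambda]}$ and must connect both left and down); its lower-right corner is $D_{r+1,\rho} = \rtile$ in Cases $\Initial$ and $\Plus$ and $D_{r+1,\rho} = \btile$ in Case $\NPlus$. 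The droop table then gives $D'_{r,\lambda} = \btile$ in every Straight case (which remains unmarked) and $D'_{r,\lambda} = \jtile$ in every DCross case (which is then marked to $\mtile$); either way $E_r(D)_{r,\lambda}$ is the unique heavy tile in the window, since the other droop-affected tiles form a light paired sequence in row $r$ and a light pipe segment in row $r+1$, and the remaining window tiles---in $[c',\lambda)$ (inside the doublecross) and in $(\rho,c]$ (untouched in Case $\Plus$)---are light and unchanged. The moreover clause then follows from the droop table: $E_r(D)_{r+1,c} = \ptile$ in Case $\Initial$ (fuse or split-fuse) and $\jtile$ in Case $\NPlus$ (standard or split), while in Case $\Plus$ the droop does not touch column $c$ and the unmark alone sends $\mtile$ to $\jtile$.
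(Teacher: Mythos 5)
Your proposal is correct and takes essentially the same route as the paper's proof: the same baseline observation from (e1)/(e*1)/(e3) that only $(r+1,c)$ can be heavy in the window, the same appeal to Lemma~\ref{L: What can E target be} to pin down the corner tiles, and the same tracking of how the unmark/droop/mark steps transform $(r,\lambda)$ and $(r+1,c)$. The only difference is organizational --- you run the analysis over the nine cases of the trichotomy, whereas the paper groups by the three possible values of $D_{r+1,c}$ outside Case $\Leftturn$ --- so there is nothing substantive to flag.
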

\begin{proof}
By (e1), (e*1) and (e3),
Tiles in $D_{[r, r+1], [c', c]}$
are all light except $D_{r+1, c}$,
which is light if and only if
we are in Case $\Initial$.
We first prove the lemma in Case
$\Leftturn$.
By Lemma~\ref{L: What can E target be},
$D_{r+1, c} = \ptile$ in Case $\Initial$
and $\mtile$ otherwise. 
Thus, $E_r(D)_{r+1, c} = \ptile$ in Case $\Initial$
and $\jtile$ otherwise.
In either case, it is light.
Since $D_{r, \lambda} = \jtile$,
if will be marked and become
the only heavy tile in $E_r(D)_{[r, r+1], [c', c]}$

Next we prove the lemma
outside of Case $\Leftturn$.
We first understand $E_r(D)_{r, \lambda}$
by the three possibilities of $D_{r+1, c}$
in Lemma~\ref{L: What can E target be}.
\begin{itemize}
\item If $D_{r, c} = \rtile$,
we know we are in Case $\Initial$ since it is light
and Case $\NPlus$ since it is not connected 
to top.
Then $\rho = c$.
After the droop, 
we know $E_r(D)_{r, c} = \ptile$.
\item If $D_{r, c} = \btile$,
we know we are not in Case $\NPlus$
since it is light and not connected to top.
Then $\rho = c$, 
and if would become $\jtile$
after the droop.
\item If $D_{r, c} = \mtile$,
then $E_r$ would unmark it. 
It will stay $\jtile$ after the droop.
\end{itemize}
After the $(r, [\lambda, \rho])$-droop,
$(r, \lambda)$ is 
the only tile that might be heavy
in $[r, r+1] \times [\lambda, \rho]$,
and therefore in $[r, r+1] \times [c', c]$
We know $(r, \lambda)$ is $\btile$ 
or $\jtile$ after the droop.
In the latter case, $E_r$ would mark it,
so $E_r(D)_{r, \lambda}$ must be heavy.
\end{proof}

\section{Tables of cases}
\newcommand{\boxit}[1]{\parbox{.85in}{#1}}

We give a diagrammatic summary of the $F$- and $E$-moves, as described in Section \ref{sec:f-moves} and \ref{sec:e-moves}, for the readers' convenience. Small examples of \window{s} of the nine cases for $f$ and $f^*$ are pictured 
in Figure \ref{F:f 9 cases}. 
The green square has coordinate $(r,\lambda)$
and the red square has coordinate $(r+1,\rho)$. In $F$-move cases other than $\NCross$
and $E$-move cases other than  $\Leftturn$, the green and red squares are the corners of the 
(un)droop window. The parts of tiles not pictured
remain the same. Small examples of the nine cases for $e$ and $e^*$ are pictured 
in Figure \ref{F:e 9 cases}.
\newcommand{\BT}{
\begin{tikzpicture}[x=1.5em,y=1.5em,thick,color = blue]
\draw[step=1,gray,thin] (0,0) grid (4,2);
\draw[color=black, thick, sharp corners] (0,0) rectangle (4,2);
\draw(1.5,2.0)--(1.5,1.0);
\draw(2.5,2.0)--(2.5,1.0);
\draw(4.0,1.5)--(3.5,1.5)--(3.5,1.0);
\draw(1.0,0.5)--(0.5,0.5);
\draw(1.5,1.0)--(1.5,0.0);
\draw(2.0,0.5)--(1.0,0.5);
\draw(2.5,1.0)--(2.5,0.0);
\draw(3.0,0.5)--(2.0,0.5);
\draw(3.5,1.0)--(3.5,0.0);
\draw(4.0,0.5)--(3.0,0.5);
\draw[step=1,red,thick] (3,0) grid (4,1);
\draw[step=1,green,thick] (0,1) grid (1,2);
\end{tikzpicture}
}
\newcommand{\BD}{
\begin{tikzpicture}[x=1.5em,y=1.5em,thick,color = blue]
\draw[step=1,gray,thin] (0,0) grid (4,2);
\draw[color=black, thick, sharp corners] (0,0) rectangle (4,2);
\draw(1.5,2.0)--(1.5,1.0);
\draw(3.0,1.5)--(2.5,1.5)--(2.5,1.0);
\draw(3.5,2.0)--(3.5,1.0);
\draw(4.0,1.5)--(3.0,1.5);
\draw(1.0,0.5)--(0.5,0.5);
\draw(1.5,1.0)--(1.5,0.0);
\draw(2.0,0.5)--(1.0,0.5);
\draw(2.5,1.0)--(2.5,0.0);
\draw(3.0,0.5)--(2.0,0.5);
\draw(3.5,1.0)--(3.5,0.5)--(3.0,0.5);
\draw[step=1,red,thick] (2,0) grid (3,1);
\draw[step=1,green,thick] (0,1) grid (1,2);
\end{tikzpicture}
}
\newcommand{\BO}{
\begin{tikzpicture}[x=1.5em,y=1.5em,thick,color = blue]
\draw[step=1,gray,thin] (0,0) grid (4,2);
\draw[color=black, thick, sharp corners] (0,0) rectangle (4,2);
\draw(1.5,2.0)--(1.5,1.0);
\draw(2.5,2.0)--(2.5,1.0);
\draw(3.5,1.5)--(3.5,1.0);
\draw(1.0,0.5)--(0.5,0.5);
\draw(1.5,1.0)--(1.5,0.0);
\draw(2.0,0.5)--(1.0,0.5);
\draw(2.5,1.0)--(2.5,0.0);
\draw(3.0,0.5)--(2.0,0.5);
\draw(3.5,1.0)--(3.5,0.5)--(3.0,0.5);
\draw[step=1,red,thick] (3,0) grid (4,1);
\draw[step=1,green,thick] (0,1) grid (1,2);
\node[color=black] at (0.5,-.508) {$c$};
\node[color=black] at (3.5,-.4) {$c'$};
\end{tikzpicture}
}
\newcommand{\CT}{
\begin{tikzpicture}[x=1.5em,y=1.5em,thick,color = blue]
\draw[step=1,gray,thin] (0,0) grid (4,2);
\draw[color=black, thick, sharp corners] (0,0) rectangle (4,2);
\draw(1.0,1.5)--(0.5,1.5)--(0.5,1.0);
\draw(1.5,2.0)--(1.5,1.5)--(1.0,1.5);
\node at (1.5,1.5) {$\bullet$};
\draw(2.5,2.0)--(2.5,1.0);
\draw(4.0,1.5)--(3.5,1.5)--(3.5,1.0);
\draw(0.5,1.0)--(0.5,0.0);
\draw(1.0,0.5)--(0.0,0.5);
\draw(2.0,0.5)--(1.0,0.5);
\draw(2.5,1.0)--(2.5,0.0);
\draw(3.0,0.5)--(2.0,0.5);
\draw(3.5,1.0)--(3.5,0.0);
\draw(4.0,0.5)--(3.0,0.5);
\draw[step=1,red,thick] (3,0) grid (4,1);
\draw[step=1,green,thick] (1,1) grid (2,2);
\end{tikzpicture}
}

\newcommand{\CD}{
\begin{tikzpicture}[x=1.5em,y=1.5em,thick,color = blue]
\draw[step=1,gray,thin] (0,0) grid (4,2);
\draw[color=black, thick, sharp corners] (0,0) rectangle (4,2);
\draw(1.0,1.5)--(0.5,1.5)--(0.5,1.0);
\draw(1.5,2.0)--(1.5,1.5)--(1.0,1.5);
\node at (1.5,1.5) {$\bullet$};
\draw(3.0,1.5)--(2.5,1.5)--(2.5,1.0);
\draw(3.5,2.0)--(3.5,1.0);
\draw(4.0,1.5)--(3.0,1.5);
\draw(0.5,1.0)--(0.5,0.0);
\draw(1.0,0.5)--(0.0,0.5);
\draw(2.0,0.5)--(1.0,0.5);
\draw(2.5,1.0)--(2.5,0.0);
\draw(3.0,0.5)--(2.0,0.5);
\draw(3.5,1.0)--(3.5,0.5)--(3.0,0.5);
\draw[step=1,red,thick] (2,0) grid (3,1);
\draw[step=1,green,thick] (1,1) grid (2,2);
\end{tikzpicture}}

\newcommand{\CO}{
\begin{tikzpicture}[x=1.5em,y=1.5em,thick,color = blue]
\draw[step=1,gray,thin] (0,0) grid (4,2);
\draw[color=black, thick, sharp corners] (0,0) rectangle (4,2);
\draw(1.0,1.5)--(0.5,1.5)--(0.5,1.0);
\draw(1.5,2.0)--(1.5,1.5)--(1.0,1.5);
\node at (1.5,1.5) {$\bullet$};
\draw(2.5,2.0)--(2.5,1.0);
\draw(3.5,1.5)--(3.5,1.0);
\draw(0.5,1.0)--(0.5,0.0);
\draw(1.0,0.5)--(0.0,0.5);
\draw(2.0,0.5)--(1.0,0.5);
\draw(2.5,1.0)--(2.5,0.0);
\draw(3.0,0.5)--(2.0,0.5);
\draw(3.5,1.0)--(3.5,0.5)--(3.0,0.5);
\draw[step=1,red,thick] (3,0) grid (4,1);
\draw[step=1,green,thick] (1,1) grid (2,2);
\node[color=black] at (1.5,-.508) {$c$};
\node[color=black] at (3.5,-.4) {$c'$};
\end{tikzpicture}
}

\newcommand{\NT}{
\begin{tikzpicture}[x=1.5em,y=1.5em,thick,color = blue]
\draw[step=1,gray,thin] (0,0) grid (4,2);
\draw[color=black, thick, sharp corners] (0,0) rectangle (4,2);
\draw(1.0,1.5)--(0.5,1.5)--(0.5,1.0);
\draw(1.5,2.0)--(1.5,1.5)--(1.0,1.5);
\node at (1.5,1.5) {$\bullet$};
\draw(2.5,2.0)--(2.5,1.0);
\draw(4.0,1.5)--(3.5,1.5)--(3.5,1.0);
\draw(0.5,1.0)--(0.5,0.5);
\draw(2.0,0.5)--(1.5,0.5)--(1.5,0.0);
\draw(2.5,1.0)--(2.5,0.0);
\draw(3.0,0.5)--(2.0,0.5);
\draw(3.5,1.0)--(3.5,0.0);
\draw(4.0,0.5)--(3.0,0.5);
\draw[step=1,green,thick] (1,1) grid (2,2);
\draw[step=1,red,thick] (3,0) grid (4,1);
\end{tikzpicture}
}

\newcommand{\ND}{
\begin{tikzpicture}[x=1.5em,y=1.5em,thick,color = blue]
\draw[step=1,gray,thin] (0,0) grid (4,2);
\draw[color=black, thick, sharp corners] (0,0) rectangle (4,2);
\draw(1.0,1.5)--(0.5,1.5)--(0.5,1.0);
\draw(1.5,2.0)--(1.5,1.5)--(1.0,1.5);
\node at (1.5,1.5) {$\bullet$};
\draw(3.0,1.5)--(2.5,1.5)--(2.5,1.0);
\draw(3.5,2.0)--(3.5,1.0);
\draw(4.0,1.5)--(3.0,1.5);
\draw(0.5,1.0)--(0.5,0.5);
\draw(2.0,0.5)--(1.5,0.5)--(1.5,0.0);
\draw(2.5,1.0)--(2.5,0.0);
\draw(3.0,0.5)--(2.0,0.5);
\draw(3.5,1.0)--(3.5,0.5)--(3.0,0.5);
\draw[step=1,green,thick] (1,1) grid (2,2);
\draw[step=1,red,thick] (2,0) grid (3,1);
\end{tikzpicture}
}

\newcommand{\NO}{
\begin{tikzpicture}[x=1.5em,y=1.5em,thick,color = blue]
\draw[step=1,gray,thin] (0,0) grid (4,2);
\draw[color=black, thick, sharp corners] (0,0) rectangle (4,2);
\draw(1.0,1.5)--(0.5,1.5)--(0.5,1.0);
\draw(1.5,2.0)--(1.5,1.5)--(1.0,1.5);
\node at (1.5,1.5) {$\bullet$};
\draw(2.5,2.0)--(2.5,1.0);
\draw(3.5,1.5)--(3.5,1.0);
\draw(0.5,1.0)--(0.5,0.5);
\draw(2.0,0.5)--(1.5,0.5)--(1.5,0.0);
\draw(2.5,1.0)--(2.5,0.0);
\draw(3.0,0.5)--(2.0,0.5);
\draw(3.5,1.0)--(3.5,0.5)--(3.0,0.5);
\draw[step=1,green,thick] (1,1) grid (2,2);
\draw[step=1,red,thick] (3,0) grid (4,1);
\node[color=black] at (1.5,-.508) {$c$};
\node[color=black] at (3.5,-.4) {$c'$};
\end{tikzpicture}

}

\newcommand{\daa}{
\begin{tikzpicture}[x=1.5em,y=1.5em,thick,color = blue]
\draw[step=1,gray,thin] (0,0) grid (4,2);
\draw[color=black, thick, sharp corners] (0,0) rectangle (4,2);
\draw(1.0,1.5)--(0.5,1.5)--(0.5,1.0);
\draw(1.5,2.0)--(1.5,1.0);
\draw(2.0,1.5)--(1.0,1.5);
\draw(2.5,2.0)--(2.5,1.0);
\draw(3.0,1.5)--(2.0,1.5);
\draw(4.0,1.5)--(3.5,1.5)--(3.0,1.5);
\draw(0.5,1.0)--(0.5,0.5);
\draw(1.5,1.0)--(1.5,0.0);
\draw(2.5,1.0)--(2.5,0.0);
\draw(4.0,0.5)--(3.5,0.5)--(3.5,0.0);
\draw[step=1,red,thick] (3,0) grid (4,1);
\draw[step=1,green,thick] (0,1) grid (1,2);
\end{tikzpicture}
}

\newcommand{\dab}{
\begin{tikzpicture}[x=1.5em,y=1.5em,thick,color = blue]
\draw[step=1,gray,thin] (0,0) grid (4,2);
\draw[color=black, thick, sharp corners] (0,0) rectangle (4,2);
\draw(1.0,1.5)--(0.5,1.5)--(0.5,1.0);
\draw(1.5,2.0)--(1.5,1.0);
\draw(2.0,1.5)--(1.0,1.5);
\draw(2.5,2.0)--(2.5,1.0);
\draw(3.0,1.5)--(2.0,1.5);
\draw(4.0,1.5)--(3.5,1.5)--(3.0,1.5);
\draw(0.5,1.0)--(0.5,0.0);
\draw(1.0,0.5)--(0.0,0.5);
\draw(1.5,1.0)--(1.5,0.5)--(1.0,0.5);
\draw(2.5,1.0)--(2.5,0.0);
\draw(4.0,0.5)--(3.5,0.5)--(3.5,0.0);
\draw[step=1,red,thick] (3,0) grid (4,1);
\draw[step=1,green,thick] (1,1) grid (2,2);
\end{tikzpicture}
}

\newcommand{\dac}{
\begin{tikzpicture}[x=1.5em,y=1.5em,thick,color = blue]
\draw[step=1,gray,thin] (0,0) grid (4,2);
\draw[color=black, thick, sharp corners] (0,0) rectangle (4,2);
\draw(1.0,1.5)--(0.5,1.5)--(0.5,1.0);
\draw(1.5,2.0)--(1.5,1.5)--(1.0,1.5);
\draw(2.5,2.0)--(2.5,1.0);
\draw(4.0,1.5)--(3.5,1.5)--(3.5,1.0);
\draw(0.5,1.0)--(0.5,0.5);
\draw(2.0,0.5)--(1.5,0.5)--(1.5,0.0);
\draw(2.5,1.0)--(2.5,0.0);
\draw(3.0,0.5)--(2.0,0.5);
\draw(3.5,1.0)--(3.5,0.0);
\draw(4.0,0.5)--(3.0,0.5);
\draw[step=1,red,thick] (3,0) grid (4,1);
\draw[step=1,green,thick] (1,1) grid (2,2);
\end{tikzpicture}
}

\newcommand{\dba}{
\begin{tikzpicture}[x=1.5em,y=1.5em,thick,color = blue]
\draw[step=1,gray,thin] (0,0) grid (4,2);
\draw[color=black, thick, sharp corners] (0,0) rectangle (4,2);
\draw(1.0,1.5)--(0.5,1.5)--(0.5,1.0);
\draw(1.5,2.0)--(1.5,1.0);
\draw(2.0,1.5)--(1.0,1.5);
\draw(3.0,1.5)--(2.0,1.5);
\draw(3.5,2.0)--(3.5,1.0);
\draw(4.0,1.5)--(3.0,1.5);
\draw(0.5,1.0)--(0.5,0.5);
\draw(1.5,1.0)--(1.5,0.0);
\draw(3.0,0.5)--(2.5,0.5)--(2.5,0.0);
\draw(3.5,1.0)--(3.5,0.5)--(3.0,0.5);
\node at (3.5,0.5) {$\bullet$};
\draw[step=1,red,thick] (2,0) grid (3,1);
\draw[step=1,green,thick] (0,1) grid (1,2);
\end{tikzpicture}
}

\newcommand{\dbb}{
\begin{tikzpicture}[x=1.5em,y=1.5em,thick,color = blue]
\draw[step=1,gray,thin] (0,0) grid (4,2);
\draw[color=black, thick, sharp corners] (0,0) rectangle (4,2);
\draw(1.0,1.5)--(0.5,1.5)--(0.5,1.0);
\draw(1.5,2.0)--(1.5,1.0);
\draw(2.0,1.5)--(1.0,1.5);
\draw(3.0,1.5)--(2.0,1.5);
\draw(3.5,2.0)--(3.5,1.0);
\draw(4.0,1.5)--(3.0,1.5);
\draw(0.5,1.0)--(0.5,0.0);
\draw(1.0,0.5)--(0.0,0.5);
\draw(1.5,1.0)--(1.5,0.5)--(1.0,0.5);
\draw(3.0,0.5)--(2.5,0.5)--(2.5,0.0);
\draw(3.5,1.0)--(3.5,0.5)--(3.0,0.5);
\node at (3.5,0.5) {$\bullet$};
\draw[step=1,red,thick] (2,0) grid (3,1);
\draw[step=1,green,thick] (1,1) grid (2,2);
\end{tikzpicture}
}

\newcommand{\dbc}{
\begin{tikzpicture}[x=1.5em,y=1.5em,thick,color = blue]
\draw[step=1,gray,thin] (0,0) grid (4,2);
\draw[color=black, thick, sharp corners] (0,0) rectangle (4,2);
\draw(1.0,1.5)--(0.5,1.5)--(0.5,1.0);
\draw(1.5,2.0)--(1.5,1.5)--(1.0,1.5);
\draw(3.0,1.5)--(2.5,1.5)--(2.5,1.0);
\draw(3.5,2.0)--(3.5,1.0);
\draw(4.0,1.5)--(3.0,1.5);
\draw(0.5,1.0)--(0.5,0.5);
\draw(2.0,0.5)--(1.5,0.5)--(1.5,0.0);
\draw(2.5,1.0)--(2.5,0.0);
\draw(3.0,0.5)--(2.0,0.5);
\draw(3.5,1.0)--(3.5,0.5)--(3.0,0.5);
\node at (3.5,0.5) {$\bullet$};
\draw[step=1,red,thick] (2,0) grid (3,1);
\draw[step=1,green,thick] (1,1) grid (2,2);
\end{tikzpicture}
}

\newcommand{\dca}{
\begin{tikzpicture}[x=1.5em,y=1.5em,thick,color = blue]
\draw[step=1,gray,thin] (0,0) grid (4,2);
\draw[color=black, thick, sharp corners] (0,0) rectangle (4,2);
\draw(1.0,1.5)--(0.5,1.5)--(0.5,1.0);
\draw(1.5,2.0)--(1.5,1.0);
\draw(2.0,1.5)--(1.0,1.5);
\draw(2.5,2.0)--(2.5,1.0);
\draw(3.0,1.5)--(2.0,1.5);
\draw(3.5,1.5)--(3.0,1.5);
\draw(0.5,1.0)--(0.5,0.5);
\draw(1.5,1.0)--(1.5,0.0);
\draw(2.5,1.0)--(2.5,0.0);
\draw[step=1,red,thick] (3,0) grid (4,1);
\draw[step=1,green,thick] (0,1) grid (1,2);
\node[color=black] at (3.5,-.508) {$c$};
\node[color=black] at (0.5,-.4) {$\lambda$};
\end{tikzpicture}
}

\newcommand{\dcb}{
\begin{tikzpicture}[x=1.5em,y=1.5em,thick,color = blue]
\draw[step=1,gray,thin] (0,0) grid (4,2);
\draw[color=black, thick, sharp corners] (0,0) rectangle (4,2);
\draw(1.0,1.5)--(0.5,1.5)--(0.5,1.0);
\draw(1.5,2.0)--(1.5,1.0);
\draw(2.0,1.5)--(1.0,1.5);
\draw(2.5,2.0)--(2.5,1.0);
\draw(3.0,1.5)--(2.0,1.5);
\draw(3.5,1.5)--(3.0,1.5);
\draw(0.5,1.0)--(0.5,0.0);
\draw(1.0,0.5)--(0.0,0.5);
\draw(1.5,1.0)--(1.5,0.5)--(1.0,0.5);
\draw(2.5,1.0)--(2.5,0.0);
\draw[step=1,red,thick] (3,0) grid (4,1);
\draw[step=1,green,thick] (1,1) grid (2,2);
\node[color=black] at (3.5,-.508) {$c$};
\node[color=black] at (1.5,-.4) {$\lambda$};
\end{tikzpicture}
}

\newcommand{\dcc}{
\begin{tikzpicture}[x=1.5em,y=1.5em,thick,color = blue]
\draw[step=1,gray,thin] (0,0) grid (4,2);
\draw[color=black, thick, sharp corners] (0,0) rectangle (4,2);
\draw(1.0,1.5)--(0.5,1.5)--(0.5,1.0);
\draw(1.5,2.0)--(1.5,1.5)--(1.0,1.5);
\draw(2.5,2.0)--(2.5,1.0);
\draw(3.5,1.5)--(3.5,1.0);
\draw(0.5,1.0)--(0.5,0.5);
\draw(2.0,0.5)--(1.5,0.5)--(1.5,0.0);
\draw(2.5,1.0)--(2.5,0.0);
\draw(3.0,0.5)--(2.0,0.5);
\draw(3.5,1.0)--(3.5,0.5)--(3.0,0.5);
\node at (3.5,0.5) {$\bullet$};
\node[color=black] at (3.5,-.508) {$c$};
\node[color=black] at (1.5,-.4) {$\lambda$};
\draw[step=1,red,thick] (3,0) grid (4,1);
\draw[step=1,green,thick] (1,1) grid (2,2);
\end{tikzpicture}
}

\begin{figure}[b]
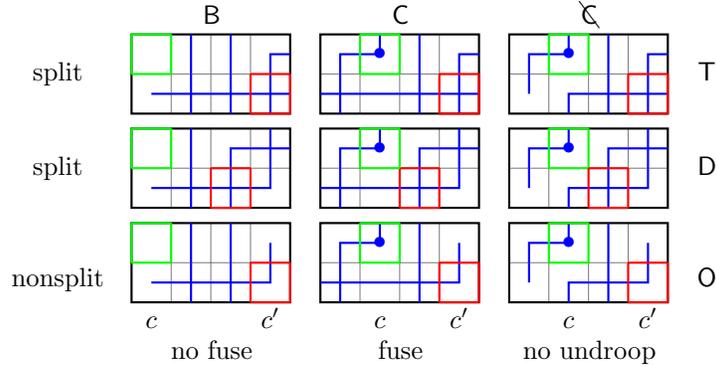

\[
\begin{array}{ccccc}
&\Blank & \Cross & \NCross & \\
\text{split}& \boxit{\BT} & \boxit{\CT} & \boxit{\NT}& \Terminal  \\[5mm]
\text{split}&\boxit{\BD} &\boxit{\CD} &\boxit{\ND}& \DCross \\[5mm]
\text{nonsplit}&\boxit{\BO} &\boxit{\CO} &\boxit{\NO}& \Ordinary  \\[5mm]
 &\text{no fuse} & \text{fuse} &\text{no undroop}
\end{array}
\]
\caption{Cases for $f$ and $f^*$}
\label{F:f 9 cases}
\end{figure}

\begin{figure}[t]
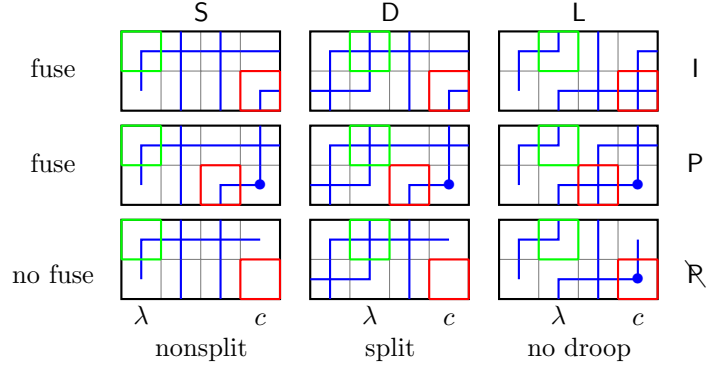

\[
\begin{array}{ccccc}
&\Straight & \DCross& \Leftturn& \\
\text{fuse}& \boxit{\daa} & \boxit{\dab} & \boxit{\dac}& \Initial \\[5mm]
\text{fuse}&\boxit{\dba} &\boxit{\dbb} &\boxit{\dbc}& \Plus \\[5mm]
\text{no fuse}&\boxit{\dca} &\boxit{\dcb} &\boxit{\dcc}& \NPlus \\[5mm]
&\text{nonsplit} & \text{split}& \text{no droop}
\end{array}
\]
\caption{Cases for $e$ and $e^*$}
\label{F:e 9 cases}
\end{figure}

\section{$\ftarg$-moves and $\etarg$-moves are inverses}
In this section, we verify that $\ftarg$-moves and $\etarg$-moves are inverses.
More specifically, 
we establish Proposition~\ref{P: EF} and Proposition~\ref{P: FE}.
\begin{prop}
\label{P: EF}
Let $D$ have $F$-target $(r,c)$
with $\window$ $[r, r+1] \times [b, c']$.
\begin{itemize}
\item Then $F_r(D)$ has $\etarg$-target $(r+1, c')$
with the same window.
\item The case of $(D,(r,c))$ 
corresponds
to the case of $(F_r(D),(r+1,c'))$ 
according to Figure~\ref{F:f 9 cases}
and Figure~\ref{F:e 9 cases}.
For instance, 
if $(D,(r,c))$ is in case $\Blank\Terminal$,
then $(F_r(D),(r+1,c'))$ is in case $\Straight\Initial$.
\item We have $E_r (F_r(D)) = D$.
\end{itemize}
\end{prop}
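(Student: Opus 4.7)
The strategy is a case analysis through the nine cases LR of the $F$-target $(D,(r,c))$, where L$\in\{\Blank,\Cross,\NCross\}$ and R$\in\{\Terminal,\DCross,\Ordinary\}$, verifying the three assertions in parallel. The organizing observation is that $F_r$ only touches tiles inside the window $[r,r+1]\times[b,c']$, and Lemma~\ref{L: Only heavy after F} records precisely which tiles inside that window are heavy afterwards.

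To establish that $(r+1,c')$ is an $E$-target of $F_r(D)$ with the same window, I verify (e1)/(e*1), (e2), (e3). Condition (e3) follows immediately from (f1) combined with the unchanged exterior: the tiles $D_{r,j}$ for $j>c'$ are light and untouched by $F_r$. For (e1) in the non-terminal cases, Lemma~\ref{L: Only heavy after F} places a heavy tile at $F_r(D)_{r+1,c'}$, and (f3) together with the unchanged exterior of row $r+1$ to the left ensures no heavy tile appears earlier. For (e*1) in Case $\Terminal$, one invokes (f*2) together with the implicit maximality of the $F$-target (i.e.\ the convention that $(r,c)$ is the bottommost-then-rightmost heavy tile of $D$) to rule out any heavy tile in row $r+1$. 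Condition (e2) is checked by observing $F_r(D)_{r,b}=\rtile$---the undroop in Cases $\Blank$ and $\Cross$ turns $D_{r,b}$ into $\rtile$, while in Case $\NCross$ it is $\rtile$ already by the choice of $b$ in \eqref{E:f b}---and by verifying that $F_r(D)_{r+1,[b,c']}$ is not a pipe segment, due to a heavy tile at $(r+1,c')$ in non-terminal cases or to a broken endpoint at $(r+1,b)$ in Case $\Terminal$. Maximality of $b$ as the witness for (e2) follows from the fact that the undroop renders row $r$ of the window a paired light sequence whose interior contains no $\rtile$.

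For the case correspondence, inspection of $F_r(D)$ inside the window, guided by Lemma~\ref{L: Only heavy after F}, yields the bijections $\Blank\leftrightarrow\Straight$, $\Cross\leftrightarrow\DCross$, $\NCross\leftrightarrow\Leftturn$ for the left trichotomy, and $\Terminal\leftrightarrow\Initial$, $\DCross\leftrightarrow\Plus$, $\Ordinary\leftrightarrow\NPlus$ for the right trichotomy. Having identified the $E$-case, one reads off $\lambda_E$ and $\rho_E$ from \eqref{E:left droop column} and \eqref{E:right droop column} and checks in each case that $\lambda_E=c$ (the left undroop column of the $F$-move) and $\rho_E=\rho_F$. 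The identity $E_r(F_r(D))=D$ then follows: in Cases $\Blank$ and $\Cross$, the $(r,[\lambda_E,\rho_E])$-droop of the $E$-move inverts the $(r,[c,\rho_F])$-undroop of the $F$-move by the inverse lemma for (un)droops, while in Case $\NCross$ no (un)droop occurs and only the marking/unmarking steps interact, which are manifestly inverse.

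The main obstacle I anticipate is the bookkeeping required to verify the match of $\lambda_E$ and $\rho_E$ across the nine cases, in particular Case $\DCross$ where $\rho_F=d<c'$ and the (un)droop rectangle lies strictly inside the window, so that one must carefully distinguish the $E$-window $[b,c']$ from the droop rectangle $[c,d]$. A secondary subtle point is the argument for (e*1) in Case $\Terminal$, which needs information about row $r+1$ outside the window beyond what (f*2) supplies---best handled by invoking the maximality convention for $F$-targets established in Section~\ref{SS:target}.
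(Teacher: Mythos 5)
Your overall architecture matches the paper's proof: verify (e1)/(e*1), (e2), (e3) for $(F_r(D),(r+1,c'))$, establish the two case trichotomy correspondences, then identify the droop columns of the resulting $E$-target with those of the $F$-move ($\lambda_E=c$, $\rho_E=\rho_F$) and conclude $E_r(F_r(D))=D$ from the inverse lemma for (un)droops. However, there is a genuine gap in your verification of (e*1). You invoke ``the implicit maximality of the $F$-target'' to rule out heavy tiles in row $r+1$, but the proposition is stated for an arbitrary $F$-target, and Remark~\ref{R:needed for bijection} makes explicit that the moves are deliberately defined for targets that need not be maximal (they are applied at non-bottommost targets in the proof that $\Phi$ lands in $\RCP(n)$). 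Maximality is therefore not an available hypothesis. The correct route is intrinsic: (f3) together with Lemma~\ref{L: Only heavy after F} handles row $r+1$ up to column $c'$; beyond $c'$, (f*2) excludes $\mtile$, and Lemma~\ref{L:TDO}~\eqref{I:bottom right} (no $\rtile$ in row $r+1$ to the right of column $c$) combined with Remark~\ref{R:rtile right of heavy} excludes $\btile$. You also leave unverified the second clause of (e*1), namely that $c'$ is the largest index at which row $r$ or row $r+1$ of $F_r(D)$ has an $\rtile$; this requires showing there is no $\jtile$ or $\rtile$ in $F_r(D)_{[r,r+1],(c',n]}$ and that $F_r(D)_{r,c'}$ or $F_r(D)_{r+1,c'}$ is $\rtile$, which again comes from (f*2) and the geometry of the undroop, not from maximality.

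A secondary problem is your justification of the maximality of $b$ as the witness for (e2): ``the undroop renders row $r$ of the window a paired light sequence whose interior contains no $\rtile$'' does not apply in Case $\NCross$, where no undroop is performed and $D_{r,(c,c')}$ may contain $\rtile$'s (it is a paired or type-R light sequence by Lemma~\ref{L:TDO}~\eqref{I:light}). For such an $\rtile$ at column $h$ with $c<h<c'$ one must instead show that $F_r(D)_{r+1,[h,c']}$ \emph{is} a pipe segment, so that $h$ fails the (e2) condition; this follows because $D_{r+1,[c,c']}$ is a pipe segment by Lemma~\ref{L:TDO}~\eqref{I:pipe} and is untouched in Case $\NCross$. (There is also a terminological slip outside Case $\NCross$: the undroop turns row $r$ of the droop rectangle into a pipe segment, not a paired light sequence, though the consequence you need---no interior $\rtile$---is the same.) The remainder of your outline, including the nine-case correspondence and the matching of $\lambda$ and $\rho$ in the delicate Case $\DCross$ where the droop rectangle is a proper subrectangle of the window, is consistent with the paper's argument.
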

\begin{proof}
Let $\lambda = c$ and $\rho$ be the left and right
droop columns of $(D,(r,c))$.
We start by proving a useful claim.
\begin{claim}
\label{Claim(EF): top is ps}
If $(D,(r,c))$ is not in Case $\NCross$
then $F_r(D)_{r, [b, c']}$ is a pipe segment.
\end{claim}
\begin{proof}
We first check $F_r(D)_{r, [b, c]}$ 
is a pipe segment.
If $(D,(r,c))$ is in case $\Cross$,
then $D_{r, [b, c]}$ is a pipe segment,
and $F_r(D)_{r, [b, c]}$ is also.
Otherwise
we have $b = c$ and $F_r(D)_{r, [b, c]}$ is 
trivially a pipe segment. 

Since $F_r$ performs an $(r, [c, \rho])$-undroop,
$F_r(D)_{r, [c, \rho]}$ is a pipe segment. 
By Lemma~\ref{L: Combine pipe segment},
it remains to check $F_r(D)_{r, [\rho, c']}$
is a pipe segment.
If $(D,(r,c))$ is not in case $\DCross$,
$\rho = c'$ and $F_r(D)_{r, [\rho, c']}$
is trivially a pipe segment.
Otherwise $D_{[r, r+1], [\rho, c']}$
is a double crossing. 
Thus $D_{r, [\rho, c']}$ is a pipe segment, 
and $F_r(D)_{r, [\rho, c']}$ is also.
\end{proof}

Now we check $\Y = (F_r(D),(r+1,c'))$ is an $E$-target.
\begin{itemize}
\item[(e1)]
We suppose $(D,(r,c))$ is not in Case $\Terminal$ and show $\Y$ satisfies (e1).
By (f3) and Lemma~\ref{L: Only heavy after F},
all tiles in $F_r(D)_{r+1, [1, c')}$ are light.
By Lemma~\ref{L: Only heavy after F},
$F_r(D)_{r+1, c'}$ is heavy, 
so it satisfies (e1).

\item[(e*1)]
We assume $(D,(r,c))$ is in case $\Terminal$ and show $\Y$ satisfies (e*1).
By (f3) and Lemma~\ref{L: Only heavy after F},
all tiles in $F_r(D)_{r+1, [1, c')}$ are light.
By Lemma~\ref{L:TDO} \eqref{I:bottom right} $D_{r+1, c'} = \ptile$.
By (f*2) there is neither $\jtile$ nor $\mtile$ in $D_{r+1,(c,n]}$.
It follows that there are no heavy tile in $D_{r+1, [c', n]}$,
or equivalently in $F_r(D)_{r+1, [c', n]}$.
Thus, there are no heavy tiles in row $r+1$ 
of $F_r(D)$.
We know there are no $\jtile$ 
in $D_{[r, r+1], (c', n]}$; we already checked this for row $r+1$,
and for row $r$ it holds by the choice of $c'$ in (f*2), because
somewhere to the right of every $\jtile$ there must be an $\rtile$.
Therefore there is neither $\jtile$ nor $\rtile$ in 
$F_r(D)_{[r, r+1], (c', n]}$.
It remains to check $F_r(D)_{r, c'}$
or $F_r(D)_{r+1, c'}$ is $\rtile$.
By (f*2), $D_{r,c'}=\rtile$.
If $(D,(r,c))$ is in Case $\NCross$,
then $F_r$ does no undroop and
$F_r(D)_{r,c'}=D_{r,c'}=\rtile$.
Otherwise, after the $(r, [c, \rho])$-undroop,
$(r+1, c')$ changes from $\ptile$ to $\rtile$.
\item[(e2)] We check $F_r(D)_{r, b}$ is the 
rightmost tile satisfying the condition of (e2) 
for $\Y$.
First, we check $F_r(D)_{r,b} = \rtile$:
In case $\Blank$, 
$b = c$ and $(r,b)$ becomes $\rtile$
after the undroop;
Otherwise, 
$D_{r, b} = \rtile$
and stays unchanged by $F_r$.

Next, we check $F_r(D)_{r+1, [b, c']}$
is not a pipe segment.
In case $\NCross$,
$D_{r+1, [b,c']}$ is not a pipe segment
since $D_{r+1,[b,c]}$ is not,
and there is no undroop.
In case $\Blank$ or $\Cross$,
$F_r(D)_{r+1,\rho}$, being the bottom right corner after an undroop,
must be $\rtile$ or $\btile$, both of which cannot 
be part of a pipe segment.

Finally, take $b < h < c'$ with
$F_r(D)_{r, h} = \rtile$.
We need to verify 
$F_r(D)_{r+1,[h,c']}$ is a pipe segment.
If we are not in Case $\NCross$,
by Claim~\ref{Claim(EF): top is ps},
there is no $\rtile$ in $F_r(D)_{r, (b, c']}$,
so such $h$ cannot exist.
Otherwise, 
$D_{r, [b, c]}$ is a pipe segment, 
so $c < h < c'$.
Then since $D_{r+1, [c, c']}$ is a pipe segment by Lemma~\ref{L:TDO} \eqref{I:pipe}, 
so is $D_{r+1, [h, c']}$. In Case $\NCross$ there is no undroop
so the pipe segment still exists after $F_r$.

\item[(e3)] 
By (f1) for $(D,(r,c))$,  
there are no heavy tiles in $D_{r, (c', n]}$,
or equivalently $F_r(D)_{r, (c', n]}$.
By Lemma~\ref{L: Only heavy after F},
there are no heavy tiles in $D_{r, [b, c']}$.
\end{itemize}

Thus, $\Y$ is an $E$-target with window $[r, r+1] \times [b, c']$.
Next we go through all the cases of $(D,(r,c))$,
assuming they hold and proving that the 
corresponding case of $\Y$ holds. For example, in the proof
 $\DCross\rightarrow\Plus$, we assume $(D,(r,c))$ is in Case $\DCross$
and prove that $\Y$ is in Case $\Plus$.
\begin{itemize}
\item[ $\Terminal\rightarrow\Initial$:]
Already checked above. In fact it was shown above that $\Terminal \leftrightarrow \Initial$.
\item[ $\DCross\rightarrow\Plus$:]
We already know $\Initial$ does not hold.
By Lemma~\ref{L:TDO} \eqref{I:light}
$D_{r, c'} = \ptile$.
Whether or not $F_r$ performs an
$(r, [c, \rho])$-undroop,
$F_r(D)_{r, c'}$ is still $\ptile$
since $\rho < c'$.
\item[ $\Ordinary\rightarrow\NPlus$:]
Again we know that $\Initial$ does not hold.
By Lemma~\ref{L:TDO} \eqref{I:light}, 
$D_{r,c'}\ne \ptile$.
Whether or not $F_r$ performs an
$(r, [c, c'])$-undroop,
$F_r(D)_{r, c'} \ne \ptile$.
\item[ $\Blank\rightarrow\Straight$:]
In this case $c = b$ and $F_r$ does an $(r, [c, \rho])$-undroop,
so $F_r(D)_{r+1, c} \neq \ptile$.
Thus, $F_r(D)_{r, c}$ cannot be the top-left
corner of a doublecross.
By Claim~\ref{Claim(EF): top is ps}, 
$F_r(D)_{r, [c, c']}$ 
is a pipe segment, 
so $\Y$ is in case $\Straight$.
\item [ $\Cross\rightarrow\DCross$:] 
By Claim~\ref{Claim(EF): top is ps}, 
$F_r(D)_{r, [b, c']}$ is a pipe segment. 
It is enough to show that $F_r(D)_{[r, r+1], [b, c]}$
is a doublecross.
First, $F_r(D)_{r, [b, c]}$ is a pipe segment
since $F_r(D)_{r, [b, c']}$ is.
By Case $\Cross$, $D_{r+1, [b, c]}$ is a pipe segment.
The $F_r$ performs an $(r, [c, \rho])$-undroop,
so $F_r(D)_{r+1, [b, c]}$ is a pipe segment.
We have shown $F_r(D)_{r, b} = \rtile$.
Finally, 
since $F_r(D)_{r+1, [b, c]}$ is a pipe segment,
$F_r(D)_{r+1, c}$ can be $\jtile$,
$\htile$ or $\ptile$.
Since $F_r(D)$ is obtained by
an $(r, [c, \rho])$-undroop,
$F_r(D)_{r+1, c} = \jtile$.
\item [ $\NCross\rightarrow\Leftturn$:]
We know $D_{r, c} = \mtile$,
so $F_r(D)_{r, c} = \jtile$.
Since $b < c < c'$,
$F_r(D)_{r, [b, c']}$ cannot be a pipe segment.
\end{itemize}

Finally, we verify $E_r(F_r(D)) = D$.
Let $\lambda', \rho'$
be the left and right droop columns of $\Y$.
We may suppose that $(D, (r, c))$ is not in Case $\NCross$
and show that
$\lambda' = c$ and $\rho' = \rho$. 
We prove $\lambda'=c$:
\begin{itemize}
\item Suppose the $F$-target $(D,(r,c))$ is in Case $\Blank$.
We have shown that the $E$-target $\Y$ is in Case $\Straight$.
We have $c=b$ from Case $\Blank$ and $b=\lambda'$ from Case $\Straight$.
\item Suppose the $F$-target $(D,(r,c))$ is in Case $\Cross$.
We know $D_{r+1, [b, c']}$ is a pipe segment. 
After applying the $(r, [c, \rho])$-undroop,
$F_r(D)_{r+1, c} = \jtile$
and $F_r(D)_{r+1, [b, c]}$ is a pipe segment.
Thus, $\lambda' = c$.
\end{itemize}
We prove $\rho'=\rho$:
\begin{itemize}
\item Suppose the $F$-target $(D,(r,c))$ is in Case $\DCross$.
We know $D_{r+1, [\rho, c']}$ is a pipe segment
and $D_{r+1, \rho} = \ptile$.
After applying the $(r, [c, \rho])$-undroop,
$F_r(D)_{r+1, [\rho, c']}$ remains a pipe segment
and $F_r(D)_{r+1, c} = \rtile$.
Thus, $\rho' = \rho$.
\item Suppose the $F$-target $(D,(r,c))$ is not in Case $\DCross$.
Then $\rho' = c' = \rho$. \qedhere
\end{itemize}
\end{proof}

\begin{prop}
\label{P: FE}
Let $D$ have $E$-target $(r+1,c)$
with $\window$ $[r, r+1] \times [c', c]$.
Let $\lambda, \rho$
be its left and right droop columns. 
\begin{itemize}
\item Then $E_r(D)$ has $\ftarg$-target $(r, \lambda)$
with the same window.
\item The case of $(D,(r+1,c))$ 
corresponds
to the case of $(E_r(D),(r+1,c'))$ 
according to Figure~\ref{F:f 9 cases}
and Figure~\ref{F:e 9 cases}.
\item We have $F_r (E_r(D)) = D$.
\end{itemize}
\end{prop}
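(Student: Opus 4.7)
The plan is to mirror the proof of Proposition~\ref{P: EF} systematically, exploiting the symmetry between droop and undroop. Let $\Y = (E_r(D), (r,\lambda))$ throughout.

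First I would establish the analogue of Claim~(EF)-top-is-ps: namely, that when $(D,(r+1,c))$ is not in Case $\Leftturn$, $E_r(D)_{r+1,[c',c]}$ is a pipe segment. In Case $\Straight$ we have $\lambda = c'$ and the $(r,[c',\rho])$-droop makes $E_r(D)_{r+1,[c',\rho]}$ a pipe segment; combined with $D_{r+1,[\rho,c]}$ (which is untouched) and then applying Lemma~\ref{L: Combine pipe segment} gives the result. In Case $\DCross$, Lemma~\ref{L:e SDL}\eqref{I: e top pipe} gives that $D_{[r,r+1],[c',\lambda]}$ is a doublecross, so $D_{r+1,[c',\lambda]}$ is already a pipe segment before the droop; the $(r,[\lambda,\rho])$-droop then produces a pipe segment in $E_r(D)_{r+1,[\lambda,\rho]}$, which glues to $D_{r+1,[\rho,c]}$.

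Next I would verify that $\Y$ is an $F$-target by checking conditions (f1), (f2)/(f*2), (f3) using Lemma~\ref{L: Only heavy after E}. Condition (f1) needs $E_r(D)_{r,\lambda}$ to be the rightmost heavy tile in row $r$: the ``rightmost in the window'' part is Lemma~\ref{L: Only heavy after E}, and ``rightmost in the row'' follows because (e3) forbids heavy tiles in row $r$ to the right of $c'\le \lambda$. For (f2)/(f*2) we use the second conclusion of Lemma~\ref{L: Only heavy after E}: $E_r(D)_{r+1,c}$ is $\ptile$ in Case $\Initial$ (giving (f*2) with $c$ playing the role of $c'$ from the $F$-side), and $\jtile$ otherwise (giving (f2)). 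Condition (f3) again uses Lemma~\ref{L: Only heavy after E} for the window, together with (e1)/(e*1) for the portion of row $r+1$ to the left of $c'$.

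Then I would match the cases by the $3\times 3$ table, working out each of the nine cells. Using the conventions of Figures~\ref{F:f 9 cases}--\ref{F:e 9 cases}, the correspondence is $\Initial\leftrightarrow\Terminal$, $\Plus\leftrightarrow\DCross$, $\NPlus\leftrightarrow\Ordinary$ on the right trichotomy and $\Straight\leftrightarrow\Blank$, $\DCross\leftrightarrow\Cross$, $\Leftturn\leftrightarrow\NCross$ on the left. For instance, in Case $\Leftturn$, Lemma~\ref{L: What can E target be} shows $D_{r+1,c}\in\{\ptile,\mtile\}$ so $E_r$ acts only by unmarking/marking; since $D_{r,\lambda}=\jtile$ becomes $\mtile$ in $E_r(D)$, we land in Case $\NCross$. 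In Case $\DCross$ the $(r,[\lambda,\rho])$-droop makes $E_r(D)_{r+1,[c',c]}$ a pipe segment with $E_r(D)_{r+1,c'}$ connecting up, which together with $E_r(D)_{r,c'}=\rtile$ (or $\btile$) exhibits the new kink configuration placing us in Case $\Cross$ of the $F$-side.

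Finally, $F_r(E_r(D))=D$ follows by computing the left and right droop columns $\lambda',\rho'$ of the $F$-target $\Y$ and showing $\lambda'=\lambda$ and $\rho'=\rho$, so that the $(r,[\lambda,\rho])$-undroop performed by $F_r$ inverts the $(r,[\lambda,\rho])$-droop performed by $E_r$ on each of the three column strips ($[c',\lambda]$ untouched, $[\lambda,\rho]$ droop-then-undroop, $[\rho,c]$ untouched). The nontrivial verification is that $\rho' = \rho$ in Case $\Plus$: here $\rho<c$ is the largest column $<c$ with $D_{r+1,\rho}=\rtile$, which after the droop becomes $E_r(D)_{r+1,\rho}=\ptile$, creating precisely the doublecross $E_r(D)_{[r,r+1],[\lambda,\rho]}$ that forces $\rho'=\rho$ via \eqref{E:F rho}. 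I expect the main obstacle to be this last step: carefully tracking the doublecross created or destroyed by the droop in Cases $\Plus$ and $\DCross$ to confirm that $\rho'$ is recovered (rather than, say, $c$), since the definition of $\rho$ depends delicately on the presence of a doublecross.
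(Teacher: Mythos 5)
Your outline follows the same architecture as the paper's own proof: a preliminary pipe-segment claim about row $r+1$ of $E_r(D)$, verification of (f1)--(f3)/(f*2) via Lemma~\ref{L: Only heavy after E}, the nine-case matching with exactly the correspondence you state, and recovery of the droop columns to conclude $F_r(E_r(D))=D$. Two details, however, are genuinely wrong or missing as written.

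First, your pipe-segment claim excludes Case $\Leftturn$, but that is precisely the case where the claim (stated for the interval $[\lambda,c]$ rather than $[c',c]$ -- note $D_{r+1,[c',c]}$ is \emph{not} a pipe segment in Case $\Leftturn$, by (e2)) is both nontrivial and indispensable. To know that the $F$-window of $(E_r(D),(r,\lambda))$ has right edge $c$ -- equivalently, that $c$ is the \emph{minimal} column $>\lambda$ with a $\jtile$ in row $r+1$ of $E_r(D)$, which is what makes $F_r$ undo exactly what $E_r$ did -- you must rule out a $\jtile$ in $E_r(D)_{r+1,(\lambda,c)}$. In Case $\Leftturn$ there is no droop, so this amounts to showing $D_{r+1,[\lambda,c]}$ is a pipe segment; your appeal to Lemma~\ref{L: Only heavy after E} gives only the existence of the $\jtile$ at $(r+1,c)$, not its minimality. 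The missing argument: let $d<c$ be maximal with $D_{r+1,d}=\rtile$, so $D_{r+1,[d,c]}$ is a pipe segment; if $d>\lambda$, then $D_{r,d}$ is light by (e3) and not connected downward, hence connected leftward, and following that pipe leftward it must turn down at an $\rtile$ in row $r$ strictly between columns $\lambda$ and $d$ (it cannot pass the $\jtile$ at $(r,\lambda)$), contradicting the maximality of $c'$ in (e2). Hence $d\le\lambda$.

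Second, in the verification $\rho'=\rho$ for Case $\Plus$, the doublecross you exhibit is wrong: $E_r(D)_{[r,r+1],[\lambda,\rho]}$ cannot be a doublecross, since its top-left corner $E_r(D)_{r,\lambda}$ is heavy ($\btile$ or $\mtile$) after the move, not $\rtile$. By \eqref{E:F rho}, the doublecross that determines $\rho'$ must have its right edge at the right end $c$ of the $F$-window, so what you need is that $E_r(D)_{[r,r+1],[\rho,c]}$ is a doublecross: $E_r(D)_{r,[\rho,c]}$ and $E_r(D)_{r+1,[\rho,c]}$ are pipe segments, $E_r(D)_{r+1,c}=\jtile$ by Lemma~\ref{L: Only heavy after E}, and $E_r(D)_{r,\rho}=\rtile$ (the droop leaves $(r,\rho)$ as $\rtile$ or $\vtile$, and the pipe-segment condition on $E_r(D)_{r,[\rho,c]}$ excludes $\vtile$). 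With these two repairs your proof coincides with the paper's.
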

\begin{proof}
We start by proving a useful claim. 

\begin{claim}
\label{claim(FE)}
$E_r(D)_{r+1, [\lambda, c]}$ is a pipe segment.
\end{claim}
\begin{proof}
We first consider Case $\Leftturn$.
It is enough to show $D_{r+1, [\lambda, c]}$
is a pipe segment. 
Let $d < c$ be the largest such that $D_{r+1, d}
= \rtile$.
By Lemma~\ref{L: What can E target be}, 
$D_{r+1, c}$ connects to the left,
so $D_{r+1, [d,c]}$ is a pipe segment. 
It is enough to show $d \leq \lambda$.
Suppose $d>\lambda$.
(e2) implies $d>c'$. 
By (e3) $D_{r,d}$ is light and not connected below,
so it is connected to the left. Going down this pipe to the left,
it must turn downwards at a $\rtile$ before reaching the $\jtile$
at $(r,\lambda)$. This $\rtile$ contradicts the choice of $c'$ in (e2).
Therefore $d\le \lambda$.

Now suppose we are not in Case $\Leftturn$.
Then $E_r(D)$ is obtained from $D$
via a $(r, [\lambda, \rho])$-droop
and possibly marking or unmarking certain tiles. 
Thus $E_r(D)_{r+1, [\lambda, \rho]}$ is a
pipe segment.
By Lemma~\ref{L: Combine pipe segment},
it remains to show $E_r(D)_{r+1, [\rho, c]}$
is a pipe segment. 
If not in Case $\Plus$, 
we are done since $c = \rho$.
If in Case $\Plus$,
we know $D_{r+1, [\rho, c]}$
is a pipe segment, and hence
so is $E_r(D)_{r+1, [\rho, c]}$.
\end{proof}

Now we check $\Y = (E_r(D),(r,\lambda))$ is an $F$-target.
\begin{itemize}
\item[(f1)] 
By (e3) for $(D,(r+1,c))$,  
there are no heavy tiles in $D_{r, (c, n]}$,
or equivalently $E_r(D)_{r, (c, n]}$.
By Lemma~\ref{L: Only heavy after E},
$(r, \lambda)$ is the only heavy tile in
$E_r(D)_{r, [c', c]}$.
Thus, there are no heavy tiles in 
$E_r(D)_{r, (\lambda, n]}$.

\item[(f2)]
Assume $(D,(r+1,c))$ is not in case $\Initial$.
We show $\Y$ satisfies (f2)
by checking $(r+1, c)$ is the leftmost
$\jtile$ in $E_r(D)_{r+1,[\lambda, n]}$.
By Claim~\ref{claim(FE)}, we know there
are no $\jtile$ in $E_r(D)_{r+1,[\lambda, c]}$.
Then by Lemma~\ref{L: Only heavy after E},
$E_r(D)_{r+1,c} = \jtile$.

\item[(f*2)]
Assume $(D,(r+1,c))$ is in case $\Initial$.
We show $\Y$ satisfies (f*2)
and verify $E_r(D)_{r, c}$ is
the rightmost $\rtile$ in row $r$ of $E_r(D)$.
By Claim~\ref{claim(FE)}, 
there are no $\jtile$ in 
$E_r(D)_{r+1, [\lambda, c)}$.
It remains to check $E_r(D)_{r,c}$
is the only $\rtile$ in
in $E_r(D)_{[r, r+1], [c, n]}$.
This will imply there are no 
$\jtile$ in $E_r(D)_{r+1, [c, n]}$

By (e*1) of $(r+1, c)$ in $D$,
we know $(r, c)$ or $(r+1, c)$
is the only $\rtile$
in $D_{[r, r+1], [c, n]}$.
If $D_{r,c} = \rtile$,
we know $D_{r, [c', c]}$ is not a pipe segment,
so we are in Case $\Leftturn$.
Then $E_{r}(D)_{r, c}$ remains the only $\rtile$
in $E_r(D)_{[r, r+1], [c, n]}$.
If $D_{r + 1,c} = \rtile$,
we know $D_{r, c} = \htile$.
After the $(r, [\lambda, c])$-droop,
$D_{r, c}$ becomes the only $\rtile$
in $E_r(D)_{[r, r+1], [c, n]}$.

\item[(f3)] 
By (e1) or (e*1) for $(D,(r+1,c))$,  
there are no heavy tiles in $D_{r+1, [1, c')}$,
or equivalently $E_r(D)_{r, [1, c')}$.
By Lemma~\ref{L: Only heavy after E},
there are no heavy tiles in $E_r(D)_{r+1, [c', c]}$.
\end{itemize}

Thus, $\Y$ is an $F$-target.
Its window is $[r, r+1] \times [b, c]$
for some $b$.
Next we go through all the cases of $(D,(r+1,c))$,
assuming they hold and proving that the 
corresponding case of $\Y$ holds. 
For example, in the proof
$\Plus\rightarrow\DCross$, we assume $(D,(r+1,c))$ is in Case $\Plus$
and prove that $\Y$ is in Case $\DCross$.
In the last three proofs, we also check the window of $\Y$
is the same as $(D, (r+1, c))$ by showing $b = c'$.

\begin{itemize}
\item[ $\Initial \rightarrow \Terminal$:]
Already checked above. In fact it was shown above that $\Initial \leftrightarrow \Terminal $.
\item[ $\Plus \rightarrow \DCross$:]
We already know $\Terminal$ does not hold.
By the definition of Case $\Plus$,
$D_{r, c} = \ptile$.
Whether or not $E_r$ performs an
$(r, [\lambda, \rho])$-droop,
$E_r(D)_{r, c}$ is still $\ptile$
since $\rho < c$.
By Lemma~\ref{L:TDO} \eqref{I:light}, 
$\Y$ is in Case $\DCross$.
\item[ $\NPlus\rightarrow\Ordinary$:]
Again we know that $\Terminal$ does not hold.
By the definition of Case $\NPlus$,
$D_{r, c} \neq \ptile$.
Whether or not $E_r$ performs an
$(r, [\lambda, \rho])$-droop,
$E_r(D)_{r, c}$ is still not a $\ptile$.
By Lemma~\ref{L:TDO} \eqref{I:light}, 
$\Y$ is in Case $\Ordinary$.
\item[$\Straight\rightarrow\Blank$:]
In this case, 
we have $\lambda = c'$ by definition.
Since $D_{r, \lambda} = \rtile$,
$E_r(r, \lambda) = \btile$
after the $(r, [\lambda, \rho])$-droop.
Thus, $\Y$ is in Case $\Blank$.
In this case, $b$ is defined as $\lambda = c'$.
\item [ $\DCross\rightarrow \Cross$:] 
We know $D_{r, [c', c]}$ is a pipe segment
and $D_{r+1, \lambda} = \jtile$,
so $D_{r, \lambda} = \ptile$.
After making the $(r, [\lambda, \rho])$-droop
and marking $(r, \lambda)$,
we know $E_r(D)_{r, \lambda} = \mtile$.
Since $D_{r, c'} = \rtile$,
we know $E_r(D)_{r, c'} = \rtile$
and $E_r(D)_{r, [c', \lambda]}$
is a pipe segment. 
Thus, $b = c'$.

By Claim~\ref{claim(FE)}, 
$E_r(D)_{r+1, [\lambda,c]}$ is a pipe segment.
Then since $D_{r+1, [c', \lambda]}$
is a pipe segment, 
so is $E_r(D)_{r+1, [c', \lambda]}$.
By Lemma~\ref{L: Combine pipe segment}, 
$E_r(D)_{r+1, [c' ,c]}$ is a pipe segment,
so $\Y$ is in Case $\Cross$.

\item [ $\Leftturn\rightarrow\NCross$:]
We know $D_{r, \lambda} = \jtile$
and is marked by $E_r$,
so $E_r(D)_{r, \lambda} = \mtile$.
By how we defined $\lambda$,
$D_{r, [c', \lambda]}$ is a pipe
segment. $E_r(D)_{r, [c', \lambda]}$ remains a pipe
segment and $E_r(D)_{r, c'} = \rtile$,
so $b = c'$.

By condition (e2),
$D_{r+1, [c' ,c]}$ is not a pipe segment.
Thus, $E_r(D)_{r+1, [c', c]}$
is not a pipe segment and $\Y$ is in case $\NCross$.
\end{itemize}

Finally, we check $F_r(E_r(D)) = D$.
We may assume $(D,(r+1,c))$ is not in case $\Leftturn$.
Let $\lambda'$ and $\rho'$ be the left and right droop columns of $\Y$.

We first check that $\lambda'=\lambda$.
\begin{itemize}
\item Suppose $(D,(r+1,c))$ is in case $\Straight$.
Then $\lambda=c'$. $\Y$ is in case $\Blank$ so 
$\lambda'=c'$ is the column of the $\btile$.
\item Suppose $(D,(r+1,c))$ is in case $\DCross$.
Then $\lambda>c'$ is minimum such that $D_{r+1,\lambda}$ is $\jtile$.
$E_r(D)_{r,\lambda}=\mtile$ so that $\lambda'=\lambda$ is the column of the $\mtile$.
\end{itemize}
We check that $\rho'=\rho$.
\begin{itemize}
\item Suppose $(D,(r+1,c))$ is in Case $\Initial$ or $\NPlus$.
Then $\rho=c$. For $\Y$ we are in Case $\Terminal$ or $\Ordinary$ respectively, and in both cases we have $\rho'=c$, the right end column index of the window.
\item Suppose $(D, (r+1, c)$ is in Case $\Plus$.
We just need to check 
$[r, r+1] \times [\rho, c]$
is a double crossing in $E_r(D)$.
Since $(D, (r+1, c)$ is not in Case $\Leftturn$,
$D_{r, [c', c]}$ is a pipe segment,
so $E_r(D)_{r, [\rho, c]}$ remains a pipe segment.
By Claim~\ref{claim(FE)},
$E_r(D)_{r+1, [\rho, c]}$ is a pipe segment. 
By Lemma~\ref{L: Only heavy after E},
$E_r(D)_{r+1, c} = \jtile$.
Finally, 
after the $(r, [\lambda, \rho])$-droop,
$E_r(D)_{r, \rho}$ can be $\rtile$ or $\vtile$.
Since $E_r(D)_{r, [\rho, c]}$ is a pipe segment,
$E_r(D)_{r, \rho} = \rtile$. \qedhere
\end{itemize}
\end{proof}

\section{Bijection between marked bumpless pipedreams and compatible pairs}

\subsection{$\Phi:\MBPD(n)\to \RCP(n)$ via $F$-moves.}
\label{SS:Phi via f moves}
The following two simple steps may be iterated to reduce any $B\in\RCP(n)$ to the empty biword. 
Suppose $B$ is nonempty; let $(i,a)$ its first biletter.
\begin{enumerate}
    \item[] (RCP1) If $i=a$ remove the first biletter $(i,i)$ from $B$.
    Write $\XX_i(B)$ for the resulting biword.
    \item[] (RCP2) If $i<a$, replace the first biletter of $B$ by $(i+1,a)$.
    Call $\uparrow B$ the result of this operation.
\end{enumerate}
We define the left inverse operations.
\begin{enumerate}
\item[] (RRCP1) Let $\II_i(B)$ be the result of prepending $(i,i)$ to $B$.
\item[] (RRCP2) If $(i,a)$ is the first biletter of $B$ and $i>1$, replace
$(i,a)$ by $(i-1,a)$. Call the result of this operation $\downarrow B$.
\end{enumerate}

The bijection $\Phi:\MBPD(n)\to\RCP(n)$ is defined by specifying the operations on marked bumpless pipedreams which correspond to the above operations on biwords.
\begin{enumerate}
    \item[(0)] If $D=D_\id$ then $\Phi(D)=()$ is the empty sequence of biletters.
\end{enumerate}
Otherwise $D$ has a heavy tile. Let $D$ have maximum $F$-target $(i,j)$.
By inductive hypothesis 
$\Phi(D)$ has the form $((i,a),(i_2,a_2),\dotsc)$ for some $a\ge i$.
\begin{enumerate}
    \item[(1)] Suppose $D$ is $F$-terminal. Then define $\Phi(D) = \II_i \Phi(\fs_i(D))$.
    \item[(2)] Suppose $D$ is $F$-nonterminal. Then $f_i(D)$ has a single heavy tile in row $i+1$ and none in later rows, so its maximum $F$-target is in row $i+1$. By induction we have defined $\Phi(f_i(D)) = ((i+1,a),(i_2,a_2),(i_3,a_3),\dotsc)$ with $i+1\le a$. 
    Define $\Phi(D) = \downarrow \Phi(f_i(D))=((i,a),(i_2,a_2),\dotsc)$.
\end{enumerate}

$\MBPD(n)$ is the disjoint union of $\{D_\id\}$, the singleton containing the unique MBPD with no heavy tiles, the set $\MBPD(n)_\term$ of MBPDs whose maximal $F$-target is terminal,
and the set $\MBPD(n)_\nonterm$ of MBPDs whose maximal $F$-target is nonterminal. 
We assume the maximal $F$-target is in row $i$. The terminal case is given by the following commutative diagram.
\[
\begin{tikzcd}
  \MBPD(n)_\term \arrow[r,"\Phi"] \arrow[d,swap,"f_i^*"] & \RCP(n) \\
  \MBPD(n) \arrow[r,swap,"\Phi"] & \RCP(n) \arrow[u,swap,"\text{$\II_i$}"] 
\end{tikzcd}
\]
The nonterminal case is given by the diagram
\[
\begin{tikzcd}
  \MBPD(n)_\nonterm \arrow[r,"\Phi"] \arrow[d,swap,"f_i"] & \RCP(n) \\
  \MBPD(n) \arrow[r,swap,"\Phi"] & \RCP(n) \arrow[u,swap,"\downarrow"] 
\end{tikzcd}
\]

We group several $f$ and $f^*$ moves into a single operation called ``row pop''. 
Then the computation of $\Phi(D)$ can be broken into a sequence of row pops.
Starting with $D$ with maximum target in row $i$, we apply $f_i$, then $f_{i+1}$, and so on, as long as the result is not terminal. For some $a \ge i$ the result $f_{a-1} \dotsm f_{i+1}f_i(D)$ is terminal. Then we apply $\fs_a$. Let $\nabla_r D=\fs_a f_{a-1}\dotsm f_{i+1} f_i (D)$. Then 
$\Phi(D) = (i,a) \cdot \Phi(\nabla_r D)$ which means prepending the biletter $(i,a)$ to the biword $\Phi(\nabla_r D)$. We introduce the notation $\rpop(D)=(i,a)$ for this biletter.
We call the map $D\mapsto (\rpop(D),\nabla_r D)$ 
\textit{row pop}. 
Iterating row pop produces the biletters of $\Phi(D)$ from left to right.

Let $\mathbb{B}$ be the set of biletters. We have a commutative diagram
\[
\begin{tikzcd}
  \MBPD(n)\setminus \{D_\id\} \arrow[r,dashed,"\Phi"] \arrow[d,swap,"\rpop\times \nabla_r"] & 
\RCP(n) \setminus \emptyset \\
\mathbb{B}\times \MBPD(n) 
\arrow[r,swap,"\id_{\mathbb{B}}\times\Phi"] & 
   \mathbb{B}\times  \RCP(n)  \arrow[u,swap,"\text{prepend}"]
\end{tikzcd}
\]

\begin{rem} \label{R:needed for bijection} 
To define the bijection $\Phi:\MBPD(n)\to\RCP(n)$ we only need to define moves using the maximum $F$-target (see \S \ref{SS:target}). However to prove that the image biwords satisfy the compatibility condition, we find it convenient to use more general moves, which are defined in terms of $F$-targets which need not be maximum.
\end{rem}

\subsection{Well-definedness of $\Phi$} 
We first prepare a   lemma that addresses a commutativity property of $f$-moves.

\begin{lem}
\label{lem:commute}
If $D$ has an $F$-target in both
row $i$ and $i+1$,
then $F_iF_{i+1}(D) = F_{i+1}F_{i}(D)$.
\end{lem}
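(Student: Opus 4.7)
The plan is to show that the windows of $F_i$ and $F_{i+1}$ are disjoint, whence both orders of application modify the same tiles in the same way. Write $(i,c)$ for the $F$-target in row $i$ with window $W=[i,i+1]\times[b,c']$, and $(i+1,\hat c)$ for the $F$-target in row $i+1$ with window $\hat W=[i+1,i+2]\times[\hat b,\hat c']$. Since $W$ and $\hat W$ meet at most in row $i+1$, it suffices to show the column intervals $[b,c']$ and $[\hat b,\hat c']$ are disjoint, which I will do by proving $\hat b>c'$.

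The first step is to show $\hat c>c'$. Condition (f3) for $(i,c)$ forces every tile of row $i+1$ strictly left of column $c'$ to be light, so the rightmost heavy tile $(i+1,\hat c)$ of row $i+1$ satisfies $\hat c\ge c'$; equality would require $D_{i+1,c'}$ to be heavy. In the $f$-case, $D_{i+1,c'}=\jtile$ is light. In the $f^*$-case, $D_{i,c'}=\rtile$ forces $D_{i+1,c'}$ to connect upward, and then (f*2), which bans $\jtile,\mtile$ in row $i+1$ strictly right of $c$, leaves $D_{i+1,c'}\in\{\vtile,\ptile\}$, again light. Hence $\hat c>c'$ in both cases.

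The second step is to show $\hat b>c'$. If $D_{i+1,\hat c}=\btile$ then $\hat b=\hat c>c'$ directly. Otherwise $D_{i+1,\hat c}=\mtile$; by (f*2) this is impossible in the $f^*$-case for $(i,c)$, so $(i,c)$ is an $f$-target and $D_{i+1,c'}=\jtile$. I would trace the pipe in row $i+1$ that enters $\mtile$ from the left: since only $\htile$, $\ptile$, $\rtile$ have a right connection and the pipe must continue leftward until it turns down, the tiles at columns $\hat b+1,\dots,\hat c-1$ are all $\htile$ or $\ptile$ and the pipe terminates at the $\rtile$ at column $\hat b$. Because $\jtile\notin\{\htile,\ptile\}$, column $c'$ cannot lie in $(\hat b,\hat c)$; combined with $c'<\hat c$, this gives $c'\le\hat b$, and since $D_{i+1,c'}=\jtile\ne\rtile$, in fact $c'<\hat b$.

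With $W\cap\hat W=\emptyset$, $F_i$ modifies no tile in $\hat W$ and $F_{i+1}$ modifies no tile in $W$. In particular each move preserves the $F$-target data of the other: the conditions (f1), (f2)/(f*2), (f3) and the case classification for $(i+1,\hat c)$ depend only on tiles in $\hat W$ together with the rightmost-heavy property in row $i+1$, and the only heavy tile $F_i$ can possibly introduce in row $i+1$ sits at column $c'$, strictly left of $\hat c$; the symmetric preservation for $(i,c)$ is immediate because $F_{i+1}$ does not touch row $i$ or columns $[b,c']$ of row $i+1$. Therefore both $F_iF_{i+1}(D)$ and $F_{i+1}F_i(D)$ perform the same two local operations on disjoint tile sets, giving equality. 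The main obstacle is the pipe-tracing argument in step two; the rest is bookkeeping about how the defining conditions of an $F$-target restrict to the untouched complement of the other move's window.
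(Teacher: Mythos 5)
Your strategy is the same as the paper's: the published proof is a one-sentence assertion that the two windows are disjoint and that each move only reads and writes inside its own window, and your steps 1 and 2 supply a correct, detailed proof of the disjointness $c'<\hat b$ (via $\hat c>c'$ and the leftward pipe trace from the $\mtile$), which the paper leaves implicit. That part of your argument is sound.

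The one step that is not as ``immediate'' as you claim is the preservation of the row-$i$ target under $F_{i+1}$. Whether $(i,c)$ is an $f$- or an $f^*$-target is governed by (f2)/(f*2), and (f*2) reads \emph{all} of row $i+1$ to the right of column $c$, including the columns $[\hat b,\hat c']$ that $F_{i+1}$ does modify. This is not harmless in principle: in Case $\Ordinary$ for $(i+1,\hat c)$ with $D_{i+1,\hat c'}=\vtile$, the $(i+1,[\hat c,\hat c'])$-undroop turns $(i+1,\hat c')$ into a $\jtile$, which would convert an $f^*$-target at $(i,c)$ into an $f$-target with a strictly larger window and break commutativity. The argument survives only because this configuration cannot occur: if $(i,c)$ were an $f^*$-target, Lemma \ref{L:TDO}\eqref{I:bottom right} forbids any $\rtile$ in row $i+1$ to the right of column $c$, whereas the heavy tile at $(i+1,\hat c)$ with $\hat c>c$ forces one by Remark \ref{R:rtile right of heavy}. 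Hence under the hypothesis of the lemma $(i,c)$ is necessarily an $f$-target, and all of its defining data (the minimal $\jtile$ column $c'$, the possible doublecross, the droop columns) lives in columns $\le c'<\hat b$, which $F_{i+1}$ genuinely does not touch. Adding this observation closes the gap; it also shows the $f^*$-subcases you treat in steps 1 and 2 are vacuous.
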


\begin{proof}
    We note that the two moves only look at and make changes within their own windows, which are necessarily disjoint by the definition of $F$-target. 
\end{proof}

\begin{prop}
The map $\Phi:\MBPD(n)\to \RCP(n)$ is well-defined and weight-preserving. 
\end{prop}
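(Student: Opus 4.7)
The plan is to verify four things in sequence: (a) termination of the recursion, (b) the range $1 \le i \le a < n$ for each emitted biletter, (c) the strictly decreasing RCP order of the output sequence, and (d) weight preservation.

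For (a), I would induct on the total number of heavy tiles of $D$, exploiting the structure of a row pop as a sequence of nonterminal $f$-moves followed by a single terminal $f^*$-move. By Lemma~\ref{L: Only heavy after F} each nonterminal $f_j$ preserves the heavy-tile count while shifting the lone traveling tile from row $j$ into row $j+1$, while $f^*_a$ removes that tile outright. Since every tile in row $n$ of an MBPD must connect south (ruling out $\btile$ and $\mtile$), the traveling tile must enter a terminal configuration by the time it reaches row $n-1$. This simultaneously yields (b), as $1 \le i \le a \le n-1$.

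The substantive part is (c). Let a row pop on $D$ emit $(i,a)$ and leave $\nabla_r D$, which then emits $(i',a')$. The first $f_i$-move modifies only tiles in $[i,i+1]\times[c, c'_i]$ (the undroop region lies in columns at or right of $c$, and in Case $\NCross$ only $(i,c)$ and $(i+1, c'_i)$ change), so any heavy tile of $D$ at $(i, c'')$ with $c''<c$ survives into $\nabla_r D$; the subsequent $f_j$-moves with $j>i$ act in rows $j, j+1$ and leave row $i$ untouched. If no such tile exists, then $\nabla_r D$'s bottommost heavy-tile row lies strictly below $i$, giving $i'<i$ and the desired inequality for free. Otherwise $i'=i$ and the new maximum $F$-target sits at $(i, c'')$ with $c''<c$; here one must show $a'>a$. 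My approach is a row-by-row comparison of the two pops, using Lemma~\ref{lem:commute} to commute $F$-moves whose windows are disjoint and a case analysis over Figure~\ref{F:f 9 cases} when the windows interact. The anticipated monotonicity is that the second pop's traveling column $c''_j$ stays strictly west of the first pop's traveling column $c_j$ at every row $j$ reached in common; combined with the fact that the $f^*$-trigger at row $a$ for the first pop only forbids $\jtile$s east of $c_a$ in row $a+1$, this should force the second pop, whose tile is at $c''_a<c_a$, still to see a relevant $\jtile$ east of $c''_a$ in row $a+1$ of $\nabla_r D$ and hence continue past row $a$.

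This column monotonicity is the main obstacle: the trail of the first pop may create or destroy $\jtile$s and $\rtile$s in rows the second pop will traverse, so the interaction has to be tracked through the $(r,[c,\rho])$-undroops defining each $F$-move, especially where the windows overlap or share a corner. Once this is in hand, item (d) is immediate: each row pop produces exactly one biletter whose first coordinate equals its starting row $i$ while simultaneously deleting one heavy tile from row $i$; summing over pops yields $|\{k : i_k = i\}| = \wt(D)_i$ for every $i$, i.e., $\wt(\Phi(D)) = \wt(D)$.
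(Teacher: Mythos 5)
Your decomposition into termination, range, RCP-order, and weight preservation is the right one, and items (a), (b), (d) and the subcase $i'<i$ of (c) are handled correctly (your termination argument via the absence of $\jtile$'s in row $n$ is a fine substitute for the paper's ``fewer $\jtile$'s to the southeast'' count). But the heart of the proposition is exactly the step you defer: when the next heavy tile in row $i$ sits at column $j'<j$, you must prove $a'>a$, and your proposal only names an ``anticipated monotonicity'' ($c''_j<c_j$ at every common row) and asserts that this ``should force'' the second pop to continue past row $a$. That claim is never established, and it is not routine bookkeeping: the first pop's undroop at row $r$ can convert the very $\jtile$ at $(r+1,\rho)$ that witnessed nonterminality into a $\btile$, or a $\ptile$ into an $\rtile$, and in Case $\Terminal$ the terminal move rearranges row $a+1$ precisely where your argument needs to locate a surviving $\jtile$ (and needs condition (f3) to hold to its left). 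Without an invariant actually checked against the nine cases of Figure~\ref{F:f 9 cases}, the proof is incomplete at its only genuinely hard point.

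The paper closes this gap by a different reduction. It reformulates $a'>a$ as the assertion that $D$ admits the operator $\varphi_{i,a}=f_af_{a-1}\dotsm f_i\,\fs_af_{a-1}\dotsm f_i$, and reduces everything to a one-step local fact: $F_i(D)$ admits $f_i$ at $(i,j')$. This is proved by a short trichotomy on row $i+1$ of $D$: if $D_{i+1,[j',n]}$ is a pipe segment one is in Case $\Terminal\Blank$ or $\Terminal\Cross$ and the undroop plants a $\jtile$ at $(i+1,j)$; if the least $k>j'$ with $D_{i+1,k}=\jtile$ satisfies $k>j$ one is in Case $\DCross$ or $\Ordinary$ and again $f_i(D)_{i+1,j}=\jtile$; and if $j'<k<j$ that $\jtile$ is untouched by $F_i$. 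In every case $(i,j')$ remains a nonterminal $f$-target. The full statement then follows by induction on the number of operators together with Lemma~\ref{lem:commute}: both tiles are pushed into row $i+1$ via $f_if_iD$, where $\varphi_{i+1,a}$ applies by induction, and commutation rearranges $\varphi_{i+1,a}f_if_i$ into $\varphi_{i,a}$. If you pursue your row-by-row comparison instead, you will find yourself reproving this one-step lemma at every row anyway, so I recommend adopting the reduction.
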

\begin{proof}
Let $D\in\MBPD(n)$. The proof proceeds by induction on the number of heavy tiles in $D$ and then on the number of operators $f_k$ and $f_k^*$ that must be applied to compute $\Phi(D)$. 

We may assume $D$ has at least one heavy tile. Let $D$ have $F$-target at $(i,j)$. 

For every move $f_r$ in the definition of $\Phi$, the new heavy tile created by applying $f_r$ is always southeast of the target. Thus the new target has fewer $\jtile$'s in the southeast. It follows that the terminal case must eventually be reached: $D$ admits a unique sequence of operators of the form $\fs_a f_{a-1}\dotsm f_{i+1} f_i$ for some $a\ge i$.  Moreover all moves do not disturb heavy tiles in previous rows or columns. 

Suppose $D$ has only one heavy tile. Then after applying $F$, $F(D)$ has no heavy tiles:
$F(D)=D_\id$. There is a unique diagram which $\fs_a$ sends to $D_\id$, namely, the Rothe BPD $D_{s_a}$ for the simple reflection $s_a$.
There is also a unique diagram that $f_{a-1}$ sends to $D_{s_a}$. Working backwards there is one possibility for $D$: a single blank at $(i,i)$, with $\jtile$'s at $(j,j)$ for $i<j\le a$. We get $\Phi(D)=((i,a))$. 

We now assume $D$ has at least two heavy tiles. $\Phi(D)$ must start with $(i,a)$ for some $a\ge i$. Suppose first $D$ only has one heavy tile in row $i$. Let $i'<i$ be maximum such that $D$ has a heavy tile in row $i'$. This tile is unchanged during the computation of $\nabla_r D = \fs_a f_{a-1}\dotsm f_{i+1}f_i D$. Therefore $\nabla_r D$ has a target in row $i'$. Hence the second biletter in $\Phi(D)$ has the form $(i',a')$ for some $a'\ge i'$. Since $i'<i$ the compatibility condition is satisfied between the first and second biletters. By the induction hypothesis $\Phi(\nabla_r D)\in\RCP(n)$. Therefore $\Phi(D)\in \RCP(n)$.

We may assume $D$ has at least two heavy tiles in row $i$. Let $j'<j$ be maximum such that $D_{i,j'}$ is heavy. During the computation of $\nabla_r D$ the $(i,j')$ tile remains unchanged. It follows that the second biletter of $\Phi(D)$ has the form $(i,a')$ for some $a' \ge i$. We must show $a<a'$. This is equivalent to showing that $D$ admits the operator
\begin{align}\label{E:F operator}
\varphi_{i,a} := f_a f_{a-1} \dotsm f_i \fs_a f_{a-1} \dotsm f_i\qquad\text{for $a\ge i$.}
\end{align}

We first argue that $F_i(D)$ admits $f_i$; that is, $F_i(D)$ with the maximal $F$-target $(i,j')$ is $F$-nonterminal.
Since $D_{i,j'}$ is $\btile$ or $\mtile$, it must be the case that $D_{i+1,j'}$ is $\htile$ or $\rtile$.  If $D_{i+1,[j',n]}$ is a pipe segment, then 
$(D,(i,j))$ must be in case $\Terminal\Blank$ or $\Terminal\Cross$. 
In these cases, 
$\fs_i(D)_{i+1,j}=\jtile$. It follows that $(i,j')$ is the maximal $f$-target in $\fs_i(D)$, and hence $F$-nonterminal. 
Otherwise there exists $k>j'$ smallest such that $D_{i+1,k}=\jtile$.
If $k>j$ then $(D,(i,j))$ is in Case $\DCross$ or $\Ordinary$.
In these cases $f_i(D)_{i+1,j}=\jtile$, which implies $(f_i(D),(i,j'))$ is 
nonterminal.  
If $j'<k<j$ then this tile is not changed by $F_i$ on $D$,
so $F_i(D)$ is $F$-nonterminal. 

The argument above shows that if $D$ is $F$-terminal (the case that $a=i$), then $D$ admits $\varphi_{i,a}=\varphi_{i,i}=f_i\fs_i$.

 Now suppose $D$ is $F$-nonterminal. To prove that $D$ admits $\varphi_{i,a}$ there are two cases depending on whether $f_i(D)$ is terminal or not. Since $f_i f_i D$ has two heavy tiles in row $i+1$ and none in later rows, we may assume that $\varphi_{i+1,a}$ applies to $f_i f_i D$.

\textbf{Case $f_i(D)$ is terminal}. In this case $a=i+1$ and $\varphi_{i,i+1} =f_{i+1} f_i \fs_{i+1} f_i$. 
By induction $\varphi_{i+1,i+1}$ applies to $f_i f_i D$. By Lemma~\ref{lem:commute} we have
\begin{align*}
  \varphi_{i+1,i+1} f_i f_i D &= f_{i+1} \fs_{i+1} f_i f_i D \\
  &= f_{i+1} f_i \fs_{i+1} f_i D = \varphi_{i,i+1} D
\end{align*}
and $\varphi_{i,i+1}$ applies to $D$ as required.

\textbf{Case $f_i(D)$ is nonterminal}. Here $a \ge i+2$. 
By induction $\varphi_{i+1,a}$ applies to $f_i f_i D$. By Lemma~\ref{lem:commute} we have
\begin{align*}
  \varphi_{i+1,a} f_i f_i D &= f_a f_{a-1}\dotsm f_{i+1} \fs_a f_{a-1}\dotsm f_{i+1} f_i f_i D \\
  &= f_a f_{a-1}\dotsm f_{i+1} \fs_a f_{a-1}\dotsm f_i f_{i+1} f_i D \\
  &= f_a f_{a-1}\dotsm f_{i+1} f_i \fs_a f_{a-1}\dotsm  f_{i+1} f_i D \\
  &= \varphi_{i,a} D
\end{align*}
as required, using the obvious commutativity of $f_i$ with operators that don't change rows $i$ and $i+1$.
\end{proof}

\subsection{ $\Psi:\RCP(n)\to\MBPD(n)$ via $E$-moves}
Recall the operations $\II_i$, $\XX_i$, $\uparrow$ and $\downarrow$
on $\RCP(n)$ defined in \S \ref{SS:Phi via f moves}.
Define the map $\Psi:\RCP(n)\to\MBPD(n)$ as follows.
$\Psi$ sends the empty biword to $D_\id$.
Let $B=((i,a),(i_2,a_2),(i_3,a_3),\dotsc)\in\RCP(n)$ be nonempty.
If $i=a$ let $\Psi(B)=\es_i(\Psi(\XX_i(B)))=\es_i(\Psi((i_2,a_2),(i_3,a_3),\dotsc))$.
Otherwise let $\Psi(B)=e_i(\Psi(\uparrow B))=e_i(\Psi((i+1,a),(i_2,a_2),(i_3,a_3),\dotsc))$.
Similar to the situation for $\Phi$, the computation of the bijection $\Psi$ can be broken into coarser operations.
Let $B=((i,a),(i_2,a_2),\dotsc)\in \RCP(n)$.
By induction we have computed $D':=\Psi((i_2,a_2),(i_3,a_3),\dotsc)\in \MBPD(n)$. Then $D:=\Psi(B)=e_i\dotsm e_{a-2} e_{a-1} e_a^*(D')$. The operation going from $D'$ to $D$ is called the \textit{row push} of $(i,a)$ into $D'$.

Let $B=((i_1,a_1),(i_2,a_2),\dotsc,(a_\ell,i_\ell))\in \RCP(n)$. 
Starting with $D_\id\in\MBPD(n)$, performing row push starting with $(a_\ell,i_\ell)$ and iterating through $(i_1,a_1)$, we obtain $\Psi(B)=D\in\MBPD(n)$.

\subsection{Well-definedness of $\Psi$}

\begin{lem} \label{L:easy es}
Suppose $D\in\MBPD(n)$ has no heavy tiles in rows $r$.
\begin{enumerate}
    \item If $D$ has no heavy tiles in row $r+1$, then $\es_r(D)$ is defined.
    \item If $D$ has a single heavy tile $D_{r+1,d}$ in row $r+1$, then $e_r(D)$ is defined.
\end{enumerate}
\end{lem}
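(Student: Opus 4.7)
The plan is to verify directly that the tile $(r+1,c)$ satisfies the three defining conditions of an $e^*$-target (case (1)) or an $e$-target (case (2)), for a suitable $c$. Condition (e3) is immediate in both parts from the hypothesis that row $r$ has no heavy tiles. For (e*1) in case (1), I take $c$ to be the largest column with $D_{r,c}$ or $D_{r+1,c}$ equal to $\rtile$; such $c$ exists because every row of any MBPD contains at least one $\rtile$. One sees this by tracing the pipe entering from the right boundary of the row: since $\jtile$ and $\mtile$ have no right connection, a leftward-traveling pipe cannot enter them, while the leftmost column admits only $\btile,\vtile,\rtile$, so the pipe must turn down at an $\rtile$ before reaching the left boundary. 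For (e1) in case (2), set $c=d$.

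The substantive step is (e2): producing $c'<c$ with $D_{r,c'}=\rtile$ such that $D_{r+1,[c',c]}$ is not a pipe segment. I split on which tile occupies the right endpoint. In the easy subcases, namely $D_{r+1,c}=\rtile$ in (1) and $D_{r+1,d}=\btile$ in (2), the right endpoint has no left connection, so every interval ending there automatically fails to be a pipe segment. Existence of $c'$ is clear in the first easy case, because row $r$ contains an $\rtile$, which by maximality of $c$ lies at column $\le c$, and by assumption $D_{r,c}\ne\rtile$. In the second easy case, one argues that the chain $(r,1),\dotsc,(r,d-1)$ cannot be entirely $\vtile$: otherwise $(r,d)$ would be forced to be $\vtile$ or $\rtile$ (both with a down connection), contradicting $(r+1,d)=\btile$ having no top connection; so some column $<d$ in row $r$ hosts an $\rtile$.

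The remaining subcases, $D_{r,c}=\rtile$ in (1) and $D_{r+1,d}=\mtile$ in (2), are handled by producing an $\rtile$ in row $r+1$ at some column $c^{**}<c$ and exploiting it. In (1) such a $c^{**}$ exists because $D_{r+1,c}$ has an up connection inherited from the $\rtile$ above, hence is not $\rtile$, while row $r+1$ must contain some $\rtile$ which by maximality of $c$ lies at column $\le c$. In (2) take $c^{**}$ to be the left endpoint $c_L$ of the horizontal pipe in row $r+1$ terminating at the heavy $\mtile$ at $(r+1,d)$. In both scenarios $D_{r,c^{**}}$ has no down connection and is light, so it is $\htile$ or $\jtile$, placing $(r,c^{**})$ inside a horizontal pipe of row $r$ whose left endpoint is an $\rtile$ at some column $c'<c^{**}$. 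Then $[c',c]$ has $c^{**}$ in its interior, where the $\rtile$ is neither $\htile$ nor $\ptile$, so $D_{r+1,[c',c]}$ is not a pipe segment. The principal obstacle is the $\mtile$ subcase of (2): one cannot simply take $c'$ to be the maximal $\rtile$ in row $r$ at columns $<d$, because that $\rtile$ may itself cap the sub-pipe in row $r+1$ from $c_L$ to $d$ and produce an honest pipe segment; the correct choice is the opening $\rtile$ of the row-$r$ sub-pipe passing through $(r,c_L)$.
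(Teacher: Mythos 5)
Your proof is correct and takes essentially the same route as the paper's: conditions (e*1)/(e1) and (e3) are checked directly, and (e2) is established by the same case split on the tile at the right end of the window together with leftward pipe-tracing to locate the required $\rtile$ in row $r$ (your explicit warning in the $\mtile$ subcase of (2) — that one must take the $\rtile$ opening the row-$r$ pipe through $(r,c_L)$ rather than the rightmost $\rtile$ of row $r$ — is exactly what the paper's choice of $d''$ accomplishes). The only under-justified step is the $\btile$ subcase of (2): the inference from ``not all of $(r,1),\dotsc,(r,d-1)$ are $\vtile$'' to ``some $(r,c')$ is $\rtile$'' is not immediate and needs the easy observation that, in a row of light tiles, every tile left of the first $\rtile$ is forced to be $\vtile$ by the no-exit-to-the-left boundary condition; the paper sidesteps this by simply tracing the pipe leftward from the $\htile$ or $\jtile$ at $(r,d)$.
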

\begin{proof} For (1), let $d$ be maximum such that either $D_{r,d}=\rtile$ or
$D_{r+1,d}=\rtile$. We need to verify that
$(D,(r+1,d))$ is an $e^*$-target. The conditions (e*1) and (e3) are straightforward, so we check (e2). 

Suppose $D_{r,d}=\rtile$. Let $\rho$ be maximum such that 
$D_{r+1,\rho}=\rtile$; necessarily $\rho<d$.
$D_{r,\rho}$ is light and does not connect downwards so it must connect to the left. Therefore there is a $d'<\rho$ such that $D_{r,d'}=\rtile$; take $d'$ maximum. 

Suppose $D_{r+1,d}=\rtile$. $D_{r,d}$ is light and does not connect downwards so it connects to the left. Therefore there is a $d'<d$ such that $D_{r,d'}=\rtile$;
take $d'$ maximum.

For (2), we verify that $(D,(r+1,d))$ is an $e$-target and we only need to check (e2).
If $D_{r+1,d}=\btile$, then $D_{r,d}=\htile$ or $\jtile$. Following this pipe to the left, we will find some $d'<d$ maximal such that $D_{r,d'}=\rtile$.
It follows that $(r+1,d)$ is an $e$-target. If $D_{r+1,d}=\mtile$, we follow this pipe to the left and find $d'<d$ such that $D_{r+1,d'}=\rtile$. Then $D_{r,d}=\htile$ or $\jtile$. Following this pipe to the left we will find $d''<d$ maximal such that $D_{r,d''}=\rtile$. 
\end{proof}

\begin{rem} \label{R:es} In Lemma \ref{L:easy es} the \window{} of the $e^*$-move is $[r,r+1]\times [d',d]$. The assumption that there are no heavy tiles in row $r$, was used to rule out heavy tiles in the range $D_{r,(d',d)}$.
\end{rem}

\begin{lem}
\label{lem:e-commute}
    Suppose $(r,c)$ is an $f$-target in $D\in\MBPD(n)$. Then
    \begin{itemize}
        \item[(1)] If there are no heavy tiles in row $r+1$, then row $r+1$ has an $e^*$-target at $(r+1,d)$ for some $d>c$. If $(r,c)$ is also an $e$-target, then $D':=e_{r-1}e^*_r(D)= e^*_re_{r-1}(D)$. 
        \item[(2)] If $(r+1,d)$ is the leftmost heavy tile in row $r+1$ with $d>c$, then it is an $e$-target. If $(r,c)$ is also an $e$-target, then $D':=e_{r-1}e_r(D)= e_re_{r-1}(D)$. 
    \end{itemize}
    For both (1) and (2), suppose $D'_{r-1, c_1}$ and $D'_{r,d_1}$ are the new heavy tiles in row $r-1$ and $r$ as compared to $D$, then $c_1<d_1$.
\end{lem}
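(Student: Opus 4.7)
My plan has three steps. First, verify that $(r+1,d)$ is of the asserted $E$-target type. In part (1), set $d$ to be the largest column with $D_{r,d}=\rtile$ or $D_{r+1,d}=\rtile$; by Remark~\ref{R:rtile right of heavy} applied to the heavy tile $(r,c)$, such a $d$ exists and satisfies $d>c$, and conditions (e*1), (e2), (e3) follow exactly as in the proof of Lemma~\ref{L:easy es}, using (f1) to control heavy tiles in row $r$ right of $c$. In part (2), (e1) is given by hypothesis. For (e2), (f2) provides a $\jtile$ at $(r+1,c')$ with $c<c'$, and (f3) forces $c'<d$ (else $(r+1,d)$ would be heavy in row $r+1$ left of $c'$, contradicting (f3)); Lemma~\ref{L:TDO}\eqref{I:light} together with a case analysis on the light sequence $D_{r,[c+1,c'-1]}$ and on $D_{r,c'}$ locates a suitable $\rtile$ in row $r$ at some column $c''$ strictly between $c$ and $d$, and the $\jtile$ at $(r+1,c')$ then witnesses that $D_{r+1,[c'',d]}$ is not a pipe segment. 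Condition (e3) follows from (f1).

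Second, I would establish the key estimate $c_0>c$, where $c_0$ is the maximum witness from (e2) for $(r+1,d)$, equal to the left endpoint of the window of the $e_r$- or $e^*_r$-move. This follows from (e3) for that target: were $c_0\le c$, the heavy tile $(r,c)$ would lie strictly right of $c_0$ in row $r$, contradicting (e3); equality is excluded because $D_{r,c}$ is heavy, not an $\rtile$. The $e_{r-1}$-move has window $W_1=[r-1,r]\times[\tilde c,c]$ with $\tilde c<c$ (from (e2) for the $E$-target $(r,c)$), and the $e_r$- or $e^*_r$-move has window $W_2=[r,r+1]\times[c_0,d]$. Since the column ranges $[\tilde c,c]$ and $[c_0,d]$ are disjoint, $W_1\cap W_2=\emptyset$. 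Because each $E$-move only alters tiles inside its own window, the two moves touch disjoint sets of tiles, so they preserve each other's target and commute, giving $D'=e_{r-1}e_r(D)=e_re_{r-1}(D)$ (and the analogous identity for part (1) involving $e^*_r$).

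Finally, for the column inequality $c_1<d_1$, Lemma~\ref{L: Only heavy after E} places the new heavy tile created in the top row of an $E$-move's window at its left-droop column, which lies inside the window; hence $c_1\in[\tilde c,c]$ and $d_1\in[c_0,d]$, yielding $c_1\le c<c_0\le d_1$. The main obstacle is step one for part (2): exhibiting the $\rtile$ in row $r$ witnessing (e2) at a column strictly greater than $c$. Lemma~\ref{L:TDO}\eqref{I:light} handles the paired-nonempty and type-R subcases directly, but the borderline subcases (notably when the light sequence $D_{r,[c+1,c'-1]}$ is empty) require a careful pipe-connectedness argument: the constraints that $(r,c+2)$ is light by (f1) and has no left connection (since $(r,c+1)$ is $\vtile$ or $\rtile$ with no right connection to $(r,c+2)$) force $(r,c+2)\in\{\vtile,\rtile\}$, and iteratively propagating these pipe-consistency constraints together with the $\rtile$'s forced near $(r+1,d)$ by Remark~\ref{R:rtile right of heavy} eventually produces the desired $\rtile$ in row $r$ strictly between $c$ and $d$.
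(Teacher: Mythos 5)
Your overall architecture matches the paper's: verify that $(r+1,d)$ is an $E$-target whose window has left edge strictly to the right of column $c$, conclude that the two windows occupy disjoint column ranges, deduce commutativity from the locality of $E$-moves, and read off $c_1<d_1$ from the window positions together with Lemma~\ref{L: Only heavy after E}. Your steps two and three are sound. The genuine gap is in step one for part (2), and you have flagged it yourself: everything hinges on exhibiting an $\rtile$ in row $r$ at a column strictly between $c$ and $d$, together with a witness that the corresponding interval of row $r+1$ fails to be a pipe segment, and your proposed route does not close. Concretely: (i) the light sequence $D_{r,[c+1,c'-1]}$ can be paired with \emph{zero} pairs in its RJ subsequence (e.g.\ all $\vtile$'s, or empty), in which case Lemma~\ref{L:TDO}\eqref{I:light} yields no $\rtile$ in that range, and the fallback candidate $c''=c'$ puts the $\jtile$ at $(r+1,c')$ at the \emph{endpoint} of $[c'',d]$ rather than in its interior, so it does not by itself obstruct the pipe-segment condition; (ii) the closing "iteratively propagating pipe-consistency constraints" is a gesture rather than an argument. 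There is also a mild circularity in your step two: you derive $c_0>c$ from (e3), but given (f1) the condition (e3) is \emph{equivalent} to $c_0>c$, so the entire burden sits on producing the witness in step one.

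The paper closes this gap by chasing pipes rather than classifying light sequences, and the heavy tile at $(r,c)$ does the work for free. Case on the heavy tile $D_{r+1,d}$. If $D_{r+1,d}=\btile$, then $D_{r,d}$ is light by (f1) and does not connect down, hence is $\htile$ or $\jtile$ and connects left; following that pipe leftward along row $r$, it must turn down at an $\rtile$ \emph{before} reaching column $c$, because the heavy tile $(r,c)$ cannot be traversed horizontally. This single observation simultaneously produces the (e2) witness and the inequality $>c$. If $D_{r+1,d}=\mtile$, one first follows the pipe in row $r+1$ leftward to the largest $d'<d$ with $D_{r+1,d'}=\rtile$ (that $\rtile$ then obstructs the pipe-segment condition from the interior), and then repeats the row-$r$ pipe-following starting from the light, non-downward-connecting tile $(r,d')$. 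The same mechanism also justifies the paper's unproved assertion "$c<d'$" in part (1), which your write-up likewise leaves implicit when importing the construction of Lemma~\ref{L:easy es} into a row that now contains a heavy tile. Replacing your part-(2) step one with this argument makes the rest of your proposal go through.
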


\begin{proof}
Since $(r,c)$ is an $f$-target, by (f1), it is the rightmost heavy tile in row $r$; by (f2), there is a minimal $c'>c$ such that $D_{r+1,c'}=\jtile$; by (f3) all $D_{r+1,j}$ are light for $j<c'$.

For (1), we make the same choices as in Lemma \ref{L:easy es} to construct $d$ and $d'$. We notice that necessarily $c<d'$. The property (f1) for the $F$-target $(D,(r,c))$ implies 
that $(D,(r+1,d))$ is an $e^*$-target. 
If $(r,c)$ is also an $e$-target, the \window{} for the $e$-target $(D,(r,c))$ has $(r,c)$ as the bottom right corner, whereas 
the top left corner of the \window{} for the $e$-target $(D,(r+1,d))$ is $(r,d')$
so these two \window s are necessarily disjoint, implying the desired commutativity as well as the last statement, since the heavy targets are moved within the windows. 

For (2), if $D_{r+1,d}=\btile$, then $D_{r,d}$ is $\htile$ or $\jtile$. It cannot be $\mtile$ since $(r,c)$ is the rightmost heavy tile in row $r$. The rest of the argument for this case is similarly to (1) when $D_{r+1,d}=\rtile$. If $D_{r+1,d}=\mtile$, we follow this pipe to the left and find $D_{r+1,d'}=\rtile$ with $d'<d$ largest. The rest of the argument is similar to (1) when $D_{r,d}=\rtile$. 
\end{proof}
\begin{thm}
    The map $\Psi: \RCP(n)\to\MBPD(n)$ is well-defined and weight-preserving. 
\end{thm}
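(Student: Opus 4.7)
The plan is to induct on $|B|$. The base case $|B|=0$ is immediate since $\Psi$ sends the empty biword to $D_\id$, which has no heavy tiles, matching $\wt$ equal to the zero vector. For the inductive step, write $B=((i,a), B')$, where either $B'$ is empty or its first biletter $(i',a')$ satisfies $i'<i$, or $i'=i$ with $a'>a$. By the inductive hypothesis, $D':=\Psi(B')$ is a well-defined element of $\MBPD(n)$ with $\wt(D')=\wt(B')$. I must establish three things: (a) $e^*_a$ applies to $D'$; (b) after applying $e^*_a$, the moves $e_{a-1}, e_{a-2}, \ldots, e_i$ apply in succession; (c) the resulting MBPD $\Psi(B)$ has weight $\wt(B)$.

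Part (c) is the routine part. By Lemma~\ref{L: Only heavy after E}, the move $e^*_a$ creates exactly one heavy tile in row $a$ (no heavy tile elsewhere is created or destroyed), while each $e_r$ destroys a heavy tile in row $r+1$ and creates one in row $r$. Iterating, the row push $e_i \dotsm e_{a-1} e^*_a$ adds a single heavy tile to row $i$ and leaves heavy-tile counts in all other rows unchanged, so $\wt(\Psi(B))$ equals $\wt(D')$ with coordinate $i$ incremented by one, which is $\wt(B)$.

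Parts (a) and (b) are where the substance lies. I will maintain the following structural invariant alongside the induction: every heavy tile of $\Psi(B')$ lies in a row equal to the first coordinate of some biletter of $B'$; hence (since biletters of $B'$ are strictly decreasing and bounded above by the first coordinate $i'$ of its first biletter) every heavy tile of $D'$ lies in a row $\le i'\le i\le a$. In particular, row $a+1$ of $D'$ contains no heavy tiles. To verify (a) when $i<a$, or when no biletter of $B'$ has first coordinate equal to $a$, row $a$ is also free of heavy tiles and Lemma~\ref{L:easy es}(1) directly supplies an $e^*$-target $(a+1,c)$. To verify (b), after applying $e^*_a$ a new heavy tile appears in row $a$; I will show inductively, using Lemma~\ref{lem:e-commute}(2), that this heavy tile admits an $e$-target in row $a-1$, and then propagate upward through rows $a-1, a-2, \ldots, i$, at each step confirming from the case analysis of Figure~\ref{F:e 9 cases} that the newly created heavy tile lies in position compatible with the next $e$-move.

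The main obstacle is the degenerate case $i=i'$ with $a<a'$, in which $D'$ already contains heavy tiles in row $a$; here the invariant above is insufficient and I anticipate needing to refine it to track column positions. The refinement I have in mind is: when two or more heavy tiles of $\Psi(B')$ occupy the same row $j$, their left-to-right order in row $j$ matches the order in which the corresponding biletters are traversed in $B'$, with the most recently pushed biletter's heavy tile being the leftmost. Under this refined invariant, the column $c$ selected in (e*1) for the $e^*$-target lies strictly to the right of the existing heavy tiles in row $a$, so (e3) holds and (a) still goes through. Propagating through the subsequent $e$-moves via the case analysis in Figure~\ref{F:e 9 cases}, the new heavy tile lands in row $i$ strictly to the right of the pre-existing heavy tiles there, preserving the refined invariant for the next induction step. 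This step parallels the role of the operator $\varphi_{i,a}$ in the proof of well-definedness of $\Phi$ and I expect it to constitute the bulk of the technical work.
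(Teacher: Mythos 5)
Your overall architecture (induction on $|B|$, the row-membership invariant for heavy tiles, Lemma~\ref{L:easy es} for the case $i>i'$, and the heavy-tile count from Lemma~\ref{L: Only heavy after E} for weight preservation) matches the paper's proof, and you have correctly isolated the case $i=i'$ as the crux. However, your treatment of that case has a genuine gap. First, the refined invariant is stated with the wrong orientation: the heavy tile produced by the most recent row push into row $i$ is the \emph{rightmost} one in that row, not the leftmost. Indeed, condition (e3) forces all pre-existing heavy tiles of row $i$ to lie strictly to the left of the window column $c'$, while the new heavy tile is created at column $\lambda\ge c'$; equivalently, $\Phi$ pops the rightmost heavy tile of a row first and the resulting biletters $(i,a),(i,a'),\dotsc$ have increasing second coordinates, so under $\Psi$ the biletter with smallest $a$ (pushed last) lands rightmost.

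Second, and more seriously, even the corrected positional invariant cannot carry the induction. What must be verified for the final move $e_i$ in the degenerate case is that $(i+1,d)$ is an $e$-target of $\Psi(\uparrow B)$, i.e.\ that conditions (e2) and (e3) hold; these are conditions on the pipe structure (existence of an $\rtile$ at $(i,c')$ with $D_{i+1,[c',d]}$ not a pipe segment, and the position of $c'$ relative to the old heavy tiles), not merely on which cells are heavy. The paper obtains them from Lemma~\ref{lem:e-commute}(2), whose hypothesis is that the rightmost heavy tile of row $i$ is an $f$-target lying strictly to the left of the single heavy tile in row $i+1$. Establishing that hypothesis is exactly the analogue of the $\varphi_{i,a}$ argument: one rewrites $\Psi(\uparrow B)$ as an interleaving of two chains of $e$-moves and commutes them past each other one step at a time using Lemma~\ref{lem:e-commute}, tracking at each stage the column inequality $c_{j}<d_{j}$ between the two travelling heavy tiles. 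You correctly anticipate that this parallels $\varphi_{i,a}$, but the mechanism you propose in its place --- a static left-to-right ordering of heavy tiles plus propagating through the case analysis of Figure~\ref{F:e 9 cases} --- presupposes that each intermediate configuration already admits a valid $E$-target, which is precisely what needs to be proved. Until that interleaving argument is supplied, the well-definedness of $e_i$ in the case $i=i'$ remains unestablished.
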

\begin{proof}
    Let $B\in\RCP(n)$. We proceed by induction on the length of $B$, and then on the number of operators $e_k$ and $e_k^*$ that must be applied to compute $\Psi(B)$.

    Suppose $B$ has length 1. If $B=((a,a))$, $\Psi(B)=e_a^*(D_{\id})=D_{s_a}$. If $B=((i,a))$ for $1\le i<a$, then $\Psi(B)=e_ie_{i+1}\cdots e_{a-1}e^*_a$ is the unique bumpless pipedream for $s_a$ with a single blank tile in row $i$.

    We now assume that $B=((i,a), (i_2,a_2),\cdots)$ has length at least 2. Let $B':=((i_2,a_2),\cdots)$ be $B$ without the first biletter.

    Suppose first that $i> i_2$. By the induction hypothesis, $D':=\Psi(B')$ is well-defined and all its heavy tiles are at or above row $i_2$. 

    If $i=a$, we check that the conditions for applying $e^*_a$ on $D'$ must hold. This follows from Lemma~\ref{L:easy es}(1), since
    there are no heavy tiles in row $a$ and $a+1$.

    If $i<a$, we check that the conditions for applying $e_i$ on $\Psi(\uparrow B)$ must hold. By induction hypothesis, $\Psi(\uparrow B)$ is defined, and contains a single heavy tile $D_{i+1,d}$ in row $i+1$, and all other heavy tiles are in rows $\le i_2$. In particular, there are no heavy tiles in row $i$. That $e_i(\Psi(\uparrow B))$ is defined follows from Lemma~\ref{L:easy es}(2).

    Now suppose $i=i_2$.  Since $B\in\RCP$, we must have $i\le a<a_2$. If $i=a$, by the induction hypothesis $\Psi(B')$ is defined, all heavy tiles are in rows $\le i$, and $\Psi(B')=e_i\Psi(((i+1,a_2),\cdots))$. Since $f_i$ and $e_i$ are inverses, $\Psi(B')$ contains an $f$-target in row $i$. Since there are no heavy tiles in row $i+1$, by Lemma~\ref{lem:e-commute}(1), row $i+1$ of $\Psi(B')$ has an $e^*$-target.

    If $i<a$, by induction hypothesis and the definition of $\Psi$, $\Psi(\uparrow B)$ is defined and 
    \[\Psi(\uparrow B)=\textcolor{red}{e_{i+1}e_{i+2}\cdots e_a^*}\textcolor{blue}{e_i\cdots e_{a-1}e_a }D^{a+1}\] 
    for some $D_{a+1}\in\MBPD(n)$, where $D_{a+1}$ has a single heavy tile in row $a+1$ and all other heavy tiles are in rows $\le i$. By the apparent commutativity of the operators indexed by non-adjacent numbers, we have

    \[\Psi(\uparrow B)={\color{red} e_{i+1}}{\color{blue}e_i}{\color{red}e_{i+2}}{\color{blue}e_{i-1}}\cdots {\color{red}e_{a-1}}{\color{blue}e_{a-2}}{\color{red}e_a^*}{\color{blue}e_{a-1}e_a} D^{a+1}.\] 
   
    (Colors are used for visual aid only.)
    Since $f_a$ and $e_a$ are inverses, $e_aD^{a+1}$ has an $f$-target in row $a$, say at $(a,c_a)$. 
    Since there are no heavy tiles below row $a$ in $e_aD^{a+1}$, $e_aD^{a+1}$ has an $e^*$-target at $(a+1, d_{a+1})$ for some $d_{a+1}>c_a$. 
    Since $e_{a-1}e_aD^{a+1}$ is defined, 
    $(a,c_a)$ is also an $e$-target in $e_aD^{a+1}$. Since there are no heavy tiles below row $a$ in $e_aD^{a+1}$, by Lemma~\ref{lem:e-commute}(1), we have \[D^a:=e_a^*e_{a-1}e_aD^{a+1}=e_{a-1}e_a^*e_aD^{a+1}.\] Furthermore, if $e_{a-1}$ moved the heavy tile $(e_aD^{a+1})_{a,c_a}$ to $D^a_{a-1,c_{a-1}}$ and $e_a^*$ created a heavy tile $D^a_{a,d_a}$, then $c_{a-1}<d_a$.

    Now notice $(D^a,(a-1,c_{a-1}))$ is an $f$-target and $(D^a,(a,d_a))$ is an $e$-target. If $i<a-1$ then $(D^a,(a-1,c_{a-1}))$ is also an $e$-target, we can then apply Lemma~\ref{lem:e-commute}(2) and get
    \[D^{a-1}:=e_{a-1}e_{a-2}D^a=e_{a-2}e_{a-1}D^a.\]
    which shows that $D^{a-1}$ admits $f_{a-2}$.
    Furthermore, if $e_{a-2}$ moved the heavy tile $D^a_{a-1,c_{a-1}}$ to $D^{a-1}_{a-2,c_{a-2}}$ and $e_{a-1}$ moved $D^a_{a,d_a}$ to $D^{a-1}_{a-1,d_{a-1}}$, then $c_{a-1}<d_{a-1}$.
     Keep applying  Lemma~\ref{lem:e-commute}(2) and continue the same reasoning, eventually we can conclude that $D^{i+1}=\Psi(\uparrow B)$ admits $f_i$, contains a single heavy tile in row $i+1$ with column index larger than the $f$-target in row $i$. Finally by Lemma~\ref{lem:e-commute}(2) again we conclude that $\Psi(\uparrow B)$ admits $e_i$. 
\end{proof}
\subsection{Proof of the main theorem}
Equipped with the well-definedness of $\Phi$ and $\Psi$, we are ready to prove our main theorem.
\begin{proof}[Proof of Theorem~\ref{T: main}]
To show $\Psi\circ\Phi=\id$, we proceed by induction on the number of $F$-operators that can be applied on $D\in\MBPD(n)$. The base case is apparent. 
    If $D\neq D_{\id}$ is $F$-terminal, then 
   \[
        \Psi(\Phi(D)) = \Psi(I_i\Phi(\fs_i(D))) 
        =\es_i(\Psi(\Phi(\fs_i(D)))) 
        =\es_i(\fs_i(D)) 
        = D.
    \]
    If $D\neq D_{\id}$ is $F$-nonterminal, then
    \begin{align*}
        \Psi(\Phi(D)) &= \Psi(\downarrow\Phi(f_i(D)))=e_i(\Psi(\uparrow\downarrow\Phi(f_i(D))))\\
        &=e_i(\Psi(\Phi(f_i(D)))) 
        =e_i(f_i(D)) 
        = D.
        \end{align*}
    Similarly, to show $\Phi\circ\Psi=\id$, we proceed by induction on the number of $\XX$ and $\uparrow$ operators that can be applied on $B\in\RCP(n)$.
    If $B=((i,i),(i_2,a_2),\dots)\in\RCP(n)$, then
    \[\Phi(\Psi(B))=\Phi(\es_i(\Psi(X_i(B))))=I_i(\Phi(\fs_i\es_i(\Psi(X_i(B)))))=B.\]
    If $B=((i_1,a_1),(i_2,a_2),\dots)\in\RCP(n)$, with $i_1<a_1$, then
    \[\Phi(\Psi(B))=\Phi(e_i(\Psi(\uparrow(B))))=\downarrow(\Phi(f_ie_i(\Psi(\uparrow B))))=B.\]
    It then follows that $\Phi$ is indeed a bijection.

    We proceed again induction on the number of $F$-operators that can be applied to $D$ to show that $\Phi$ is permutation-preserving. If $D\neq D_{\id}$ is $F$-terminal, then 
    \[w(\Phi(D))=w(\II_i\Phi(\fs_i(D)))=w(\Phi(\fs_i(D)))*s_i=w(\fs_i(D))*s_i=w(D) \]
    where the last step is by Proposition~\ref{P: F permutation}. Finally, if $D\neq D_{\id}$ is $F$-nonterminal, 
    \[w(\Phi(D))=w(\downarrow\Phi(f_i(D)))=w(\Phi(f_i(D)))=w(f_i(D))=w(D).\]
    That $\Psi$ is permutation-preserving follows from it being the inverse of $\Phi$.

    Finally, when restricted to the reduced case, The only relevant cases for $\Phi$ are $\Terminal\Blank$ and $\Ordinary\Blank$, and the corresponding relevant cases for $\Psi$ are $\Initial\Straight$ and $\NPlus\Straight$. 
\end{proof}

\section{An example}
In the following diagrams the red square is the target and the violet square is the opposite corner of the undroop.
\[
\parbox{1.25in}{

\]

\bibliographystyle{alphaurl}
\bibliography{ref}
\end{document}